\title{On boundedness of characteristic class via quasi-morphism}
\author{Morimichi Kawasaki}
\address[Morimichi Kawasaki]{Department of Mathematical Sciences, Aoyama Gakuin University, 5-10-1 Fuchinobe, Chuo-ku, Sagamihara-shi, Kanagawa, 252-5258, Japan}
\email{kawasaki@math.aoyama.ac.jp}
\author{Shuhei Maruyama}
\address[Shuhei Maruyama]{Graduate School of Mathematics, Nagoya University, Japan}
\email{m17037h@math.nagoya-u.ac.jp}
\date{\today}
\keywords{Symplectic manifolds; the groups of Hamiltonian diffeomorphisms; quasi-morphism; group cohomology; bounded cohomology; characteristic class; Hamiltonian fibration.}
\newtheorem{theorem}{Theorem}[section]
\newtheorem{lemma}[theorem]{Lemma}
\newtheorem{proposition}[theorem]{Proposition}
\newtheorem{corollary}[theorem]{Corollary}
\theoremstyle{definition}
\newtheorem{definition}[theorem]{Definition}
\newtheorem{example}[theorem]{Example}
\newtheorem{disclaimer}[theorem]{Disclaimer}
\theoremstyle{remark}
\newtheorem{remark}[theorem]{Remark}
\newcommand{\Ham}{\mathop{\mathrm{Ham}}\nolimits}
\newcommand{\tHam}{\mathop{\widetilde{\mathrm{Ham}}}\nolimits}
\newcommand{\RR}{\mathbb{R}}
\newcommand{\ZZ}{\mathbb{Z}}
\newcommand{\AR}{A}
\newcommand{\tmu}{\mu}
\newcommand{\ke}{\mathrm{Ker}}
\newcommand{\Hom}{\mathop{\mathrm{Hom}}\nolimits}
\newcommand{\id}{\mathop{\mathrm{id}}\nolimits}
\newcommand{\Cont}{\mathop{\mathrm{Cont}}\nolimits}
\newcommand{\grp}{\mathop{\mathrm{grp}}\nolimits}
\newcommand{\Homeo}{\mathop{\mathrm{Homeo}}\nolimits}
\newcommand{\Diff}{\mathop{\mathrm{Diff}}\nolimits}
\renewcommand{\gg}{\gamma}
\newcommand{\GG}{\Gamma}
\newcommand{\tG}{\tilde{G}}
\newcommand{\tg}{\tilde{g}}
\newcommand{\Int}{\mathrm{Int}}
\newcommand{\kwsk}{\color{magenta}}
\newcommand{\maru}{\color{blue}}
\begin{document}

\begin{abstract}
  In this paper, we characterize the second bounded characteristic classes of foliated bundles in terms of the non-descendible quasi-morphisms on the universal covering of the structure group.
  As its application, we study the boundedness of obstruction classes for (contact) Hamiltonian fibrations and show the non-existence of foliated structures on some Hamiltonian fibrations. Moreover, for any closed symplectic manifold, we show the non-triviality of the second bounded cohomology group of the Hamiltonian diffeomorphism group.
  %In this paper, we characterize the second bounded characteristic classes of foliated bundles in terms of the homogeneous quasi-morphisms on the universal covering of the structure group.
  %As its application, we prove the boundedness and unboundedness of obstruction classes of (contact) Hamiltonian fibrations and the non-triviality of the second bounded cohomology of the group of Hamiltonian diffeomorphisms of any closed symplectic manifold.
\end{abstract}

\maketitle

%\tableofcontents

%%%%%%%%%%%%%%%%%%%%%%%%%%%%%%%%%%%%%%%%%%%%%%%%%%%%%%%%%%%%%%%%%%%%%%%%%%%
%%%%%%%%%%%%%%%%%%%%%%%%%%%%%%%%%%%%%%%%%%%%%%%%%%%%%%%%%%%%%%%%%%%%%%%%%%%
%%%%%%%%%%%%%%%%%%%%%%%%%%%%%%%%%%%%%%%%%%%%%%%%%%%%%%%%%%%%%%%%%%%%%%%%%%%
%%%%%%%%%%%%%%%%%%%%%%%%%%%%%%%%%%%%%%%%%%%%%%%%%%%%%%%%%%%%%%%%%%%%%%%%%%%
%%%%%%%%%%%%%%%%%%%%%%%%%%%%%%%%%%%%%%%%%%%%%%%%%%%%%%%%%%%%%%%%%%%%%%%%%%%

%Lemma \ref{delta0}は「Since $\pi$ is surjective, we can easily prove」で済ませちゃう？
%\delta\pi^\ast\mu_2=\mu_1も説明省略可能
%Euler classとの関係をどう書く？
%位相群は常に連結っていいの？
%有界コホモロジーの例の単射は短い証明も書いとく？

%%%%%%%%%%%%%%%%%%%%%%

\section{Key Theorems}\label{intro3}

%\textbf{Disclaimer} :
\begin{comment}
\begin{disclaimer}
Throughout the present paper, we tacitly assume that topological group $G$ is path-connected, locally path-connected, and semilocally simply-connected.
In particular, every topological group $G$ in the present paper {\maru admits the universal covering $\pi\colon\tG\to G$.}
\end{disclaimer}
\end{comment}

Let $G$ be a connected topological group which admits the universal covering $\pi \colon \tG \to G$ and $G^{\delta}$ denote the group $G$ with the discrete topology.
Cohomology classes of the classifying spaces $BG$ and $BG^{\delta}$ are considered as universal characteristic classes of principal $G$-bundles and foliated $G$-bundles (or flat $G$-bundles), respectively.
In this paper, we concentrate our interest on the characteristic classes in degree two.
The identity homomorphism $\iota \colon G^{\delta} \to G$ induces a continuous map $B\iota \colon BG^{\delta} \to BG$ and a homomorphism
\[
  B\iota^* \colon H^2(BG;\RR) \to H^2(BG^{\delta};\RR).
\]
In this article, an element of $\mathrm{Im}(B\iota^*)$ is simply called a \textit{characteristic class of foliated $G$-bundles}.
Hence in our terminology, if a characteristic class
% in $\mathrm{Im}(B\iota^*)$
is non-zero for a foliated $G$-bundle $E$, the bundle $E$ is non-trivial  not only as a foliated $G$-bundle but also as a $G$-bundle.
%An element of the image of $B\iota^*$ is a characteristic class of foliated $G$-bundles that is also a characteristic class of principal $G$-bundles.

Let $H_{\grp}^2(G;\RR)$ and $H_b^2(G;\RR)$ be the second group cohomology and second bounded cohomology of $G$, respectively.
Then, there is a canonical map
\[
  c_G \colon H_{b}^2(G;\RR) \to H_{\grp}^2(G;\RR)
\]
called the comparison map.
A group cohomology class $\alpha \in H_{\grp}^2(G;\RR)$ is called \textit{bounded} if it is in the image of $c_G$.

Since the cohomology group $H^2(BG^{\delta};\RR)$ is canonically isomorphic to $H_{\grp}^2(G;\RR)$, we can consider the intersection
%In this paper, we study the space of the intersection
\[
  \mathrm{Im}(c_G) \cap \mathrm{Im}(B\iota^*)
\]
as a subspace of $H_{\grp}^2(G;\RR)$.
This intersection $\mathrm{Im}(c_G) \cap \mathrm{Im}(B\iota^*)$ is the vector space of bounded characteristic classes of foliated $G$-bundles.%that are derived from characteristic classes of principal $G$-bundles.

Our main theorem stated below characterizes the space $\mathrm{Im}(c_G) \cap \mathrm{Im}(B\iota^*)$ in terms of the homogeneous quasi-morphisms on the universal covering $\tG$ of $G$.
Let $Q(\tG)$ and $Q(G)$ be the vector space of all homogeneous quasi-morphisms on $\tG$ and on $G$, respectively (see Subsection \ref{subsec:prel_grp_bdd_qm} for the definition).
%Then the following holds.
Let $\pi^* \colon Q(G) \to Q(\tG)$ be the pullback induced from $\pi \colon \tG \to G$.

\begin{theorem}[Theorem \ref{thm:isom_thm_qm_bdd_ch_class}]\label{thm main}
  There exists an isomorphism
  \[
    Q(\tG) / \big( H_{\grp}^1(\tG;\RR) + \pi^*Q(G) \big) \xrightarrow{\cong} \mathrm{Im}(c_G) \cap \mathrm{Im}(B\iota^*).
  \]
\end{theorem}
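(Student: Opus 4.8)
The plan is to realise the isomorphism explicitly through the transgression of the central extension $1\to\pi_1(G)\to\tG\xrightarrow{\pi}G\to1$, where $\pi_1(G):=\ke\pi$ is a discrete central subgroup of $\tG$. First I would rewrite both sides of the claimed isomorphism in algebraic terms. Since $BG$ is simply connected with $\pi_2(BG)\cong\pi_1(G)$, the Hurewicz theorem together with universal coefficients gives a natural identification $H^2(BG;\RR)\cong\Hom(\pi_1(G),\RR)$; comparing the Serre spectral sequences of the fibrations $K(\pi_1(G),1)\to B\tG^{\delta}\to BG^{\delta}$ and $K(\pi_1(G),1)\to B\tG\to BG$ — which have the same fibre, the first with $d_2$ the transgression of the discrete central extension and the second with $d_2$ an isomorphism because $B\tG$ is $2$-connected — shows that under this identification $B\iota^*$ becomes the transgression $\tau\colon\Hom(\pi_1(G),\RR)\to H^2_{\grp}(G;\RR)$. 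Concretely $\tau(\bar\alpha)=[\bar\alpha_*c]$, where $c(g,h)=s(g)s(h)s(gh)^{-1}\in\pi_1(G)$ is the group $2$-cocycle of a set-theoretic section $s\colon G\to\tG$. Hence $\mathrm{Im}(B\iota^*)=\mathrm{Im}(\tau)$, and the five-term exact sequence of the extension identifies $\ke\tau$ with the image of the restriction map $H^1_{\grp}(\tG;\RR)\to\Hom(\pi_1(G),\RR)$.

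Next I would build the comparison map. For $\phi\in Q(\tG)$ the restriction $\phi|_{\pi_1(G)}$ is a homomorphism — $\pi_1(G)$ is abelian and homogeneous quasi-morphisms are additive on commuting elements — so put $\Psi(\phi):=\tau(\phi|_{\pi_1(G)})$. With $\psi:=\phi\circ s$, and using that $c(g,h)$ is central so that $\phi(s(gh)\,c(g,h))=\phi(s(gh))+\phi(c(g,h))$ \emph{exactly}, one computes that the $2$-cochain $(g,h)\mapsto\psi(g)+\psi(h)-\psi(gh)-(\phi|_{\pi_1(G)})_*c(g,h)$ is bounded by the defect of $\phi$; since $(\phi|_{\pi_1(G)})_*c$ is a cocycle and $\psi(g)+\psi(h)-\psi(gh)$ a coboundary, this exhibits $(\phi|_{\pi_1(G)})_*c$ as cohomologous to a bounded cocycle, so $\Psi(\phi)\in\mathrm{Im}(c_G)$; together with the previous step, $\Psi$ takes values in $\mathrm{Im}(c_G)\cap\mathrm{Im}(B\iota^*)$. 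A homomorphism $\tG\to\RR$ restricts to itself on $\pi_1(G)$ and so lies in $\ke\tau$ after restriction; a quasi-morphism $\pi^*q$ vanishes on $\ke\pi=\pi_1(G)$. Thus $\Psi$ annihilates $H^1_{\grp}(\tG;\RR)+\pi^*Q(G)$ and descends to a map $\bar\Psi$ on the quotient.

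Finally I would prove $\bar\Psi$ bijective. For injectivity, if $\Psi(\phi)=0$ then $\phi|_{\pi_1(G)}\in\ke\tau$, so by the first step $\phi|_{\pi_1(G)}=\beta|_{\pi_1(G)}$ for some $\beta\in H^1_{\grp}(\tG;\RR)$; then $\phi-\beta$ is a homogeneous quasi-morphism vanishing on $\ke\pi$, hence descends to a homogeneous quasi-morphism on $G$ (its value at $g$ being $\phi(\tg)$ for any lift $\tg$, well defined by additivity on commuting elements), so $\phi-\beta\in\pi^*Q(G)$ and $\phi\in H^1_{\grp}(\tG;\RR)+\pi^*Q(G)$. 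For surjectivity, given $x\in\mathrm{Im}(c_G)\cap\mathrm{Im}(B\iota^*)$, write $x=\tau(\bar\alpha)=[\bar\alpha_*c]$; since $x$ is bounded, $\bar\alpha_*c$ is cohomologous to a bounded cocycle $b$. Form the pushout of the extension $\tG\to G$ along $\bar\alpha\colon\pi_1(G)\to\RR$, a central extension $1\to\RR\to E_{\bar\alpha}\to G\to1$; for a suitable section its cocycle equals $b$, so the map recording the $\RR$-component relative to that section is a quasi-morphism restricting to $\id_\RR$ on $\RR$, and its homogenisation is a homogeneous quasi-morphism $\mu\colon E_{\bar\alpha}\to\RR$ with $\mu|_\RR=\id_\RR$. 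Pulling $\mu$ back along the canonical homomorphism $\tG\to E_{\bar\alpha}$, which sends $\pi_1(G)$ into $\RR$ via $\bar\alpha$, yields $\phi\in Q(\tG)$ with $\phi|_{\pi_1(G)}=\bar\alpha$, so $\Psi(\phi)=x$.

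I expect the main obstacle to be the first step — making rigorous that $B\iota^*$ on degree-two real cohomology is the transgression of the extension $\tG^{\delta}\to G^{\delta}$, i.e.\ that the classifying map $BG^{\delta}\to BG$ detects this extension class — together with the standard but slightly delicate fact, used for surjectivity, that a central $\RR$-extension with bounded cocycle carries a homogeneous quasi-morphism restricting to the identity on the centre. The purely quasi-morphism-theoretic ingredients (additivity on commuting elements and homogenisation) are exactly what translate boundedness into the cohomological statement in both directions.
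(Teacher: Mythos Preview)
Your argument is correct and follows the same overall architecture as the paper: first identify $\mathrm{Im}(B\iota^*)$ with the image of the transgression $\tau$ of the central extension $1\to\pi_1(G)\to\tG\to G\to1$ by comparing the Serre spectral sequences of $B\tG\to BG$ and $B\tG^{\delta}\to BG^{\delta}$ (this is exactly the paper's Lemma~\ref{lemma:tau_equal_Bid_2} together with Remark~\ref{remark:derivation_is_isom}), and then show that $\phi\mapsto\tau(\phi|_{\pi_1(G)})$ induces the desired isomorphism. The difference lies in how the second step is organised. The paper first treats an \emph{arbitrary} extension $1\to K\to\Gamma\to G\to1$, introduces the auxiliary subspace $\mathcal{C}(\Gamma)\subset C^1(\Gamma;\RR)$ and the map $\mathfrak{d}$, and proves the isomorphism $(\mathcal{C}(\Gamma)\cap Q(\Gamma))/(H^1(\Gamma)+\pi^*Q(G))\cong\mathrm{Im}(\tau)\cap\mathrm{Im}(c_G)$ by a diagram chase in the grid whose rows are the sequences $H^1\to Q\to H_b^2\to H^2$ for $K,\Gamma,G$ and whose third column uses Bouarich's injectivity $H_b^2(G)\hookrightarrow H_b^2(\Gamma)$; only afterwards does it specialise to the central case, where $\mathcal{C}(\Gamma)\cap Q(\Gamma)=Q(\Gamma)$. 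You instead argue directly on the central extension: injectivity via the five-term exact sequence, and surjectivity by an explicit pushout construction producing a homogeneous quasi-morphism on $E_{\bar\alpha}$ that restricts to the identity on the central $\RR$. Your surjectivity step is thus a concrete unwinding of what the paper's diagram chase encodes abstractly. The paper's packaging buys a statement valid for non-central extensions (Theorem~\ref{thm:isomorphism_theorem_general}); yours is shorter and more self-contained for the case at hand, and avoids invoking the exactness of $0\to H_b^2(G)\to H_b^2(\Gamma)$ as a black box.
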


\begin{comment}
The definition of the isomorphism $\overline{\iota^\pi}$ is purely algebraic and is explained in Section \ref{section from qm to grp coh}.
Here we give a topological description (explained in Section \ref{section top int}) of the map $\overline{\iota^\pi}$ in terms of the obstruction class.
Let $\mathfrak{o} \in H^2(BG;\pi_1(G))$ be the primary obstruction to a section of principal $G$-bundles.
For a homogeneous quasi-morphism $\tmu \in Q(\tG)$, the restriction $\tmu|_{\pi_1(G)} \colon \pi_1(G) \to \RR$ induces the coefficients change
\[
  (\tmu|_{\pi_1(G)})_* \colon H^2(BG;\pi_1(G)) \to H^2(BG;\RR).
\]
Then, for a homogeneous quasi-morphism $\tmu \in Q(\tG)$, we obtain a cohomology class $(Bi)^*(\tmu|_{\pi_1(G)})_*\mathfrak{o} \in \mathrm{Im}(Bi)^*$ and this cohomology class is equal to the class $\overline{\iota^\pi}([\tmu])$.
\end{comment}

The following corollary, which is mainly used in applications,  immediately follows from Theorem \ref{thm main}.
\begin{corollary}[Corollary \ref{cor:isom_thm_tG_perfect_case}]\label{cor main}
  Let $G$ be a topological group whose universal covering $\tG$ satisfies $H_{\grp}^1(\tG;\RR) = 0$.
  Then there exists an isomorphism
  \[
    Q(\tG)/\pi^*Q(G) \xrightarrow{\cong} \mathrm{Im}(c_G) \cap \mathrm{Im}(B\iota^*) \  \left(\subset H_{\grp}^2(G;\RR)\right).
  \]
  In particular, if $\tmu\in Q(\tG)$ does not descend to $G$ i.e., $\tmu\notin\pi^*Q(G)$, then $\tmu$ gives rise to a non-trivial element of $H_{\grp}^2(G;\mathbb{R})$.
\end{corollary}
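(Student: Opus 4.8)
The plan is first to note that Corollary \ref{cor main} follows immediately from Theorem \ref{thm main}: when $H_{\grp}^1(\tG;\RR)=0$ the left-hand quotient collapses to $Q(\tG)/\pi^*Q(G)$, and a homogeneous quasi-morphism $\tmu\notin\pi^*Q(G)$ then represents a nonzero class there, whose image under the isomorphism is a nonzero element of $H_{\grp}^2(G;\RR)$ --- this is the ``in particular'' clause. So the content is Theorem \ref{thm main}, which I would prove as follows. The starting point is that $\pi\colon\tG\to G$ realizes a central extension $1\to\pi_1(G)\to\tG\xrightarrow{\pi}G\to 1$ of discrete groups, with $\pi_1(G)$ central and amenable (being abelian).

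I would use two inputs. The first is the standard exact sequence
\[
  0\longrightarrow H_{\grp}^1(\Gamma;\RR)\longrightarrow Q(\Gamma)\xrightarrow{\ \delta\ }H_b^2(\Gamma;\RR)\xrightarrow{\ c_\Gamma\ }H_{\grp}^2(\Gamma;\RR)
\]
for any group $\Gamma$, whose exactness at $Q(\Gamma)$ gives $\mathrm{Ker}\,\delta=H_{\grp}^1(\Gamma;\RR)$ and at $H_b^2(\Gamma;\RR)$ gives $\mathrm{Im}\,\delta=\mathrm{Ker}\,c_\Gamma$. The second is that, $\pi_1(G)$ being amenable, inflation is an isomorphism $\pi^*\colon H_b^2(G;\RR)\xrightarrow{\cong}H_b^2(\tG;\RR)$. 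These let me define
\[
  \Psi\colon Q(\tG)\longrightarrow H_{\grp}^2(G;\RR),\qquad \Psi(\tmu)=c_G\big((\pi^*)^{-1}[\delta\tmu]_b\big),
\]
which is meaningful because $\pi^*$ is bijective on $H_b^2$.

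To identify the image: $\mathrm{Im}\,\Psi\subseteq\mathrm{Im}\,c_G$ by construction, while by naturality of the comparison map $\pi^*\big(\Psi(\tmu)\big)=c_{\tG}([\delta\tmu]_b)=[\delta\tmu]=0$ in $H_{\grp}^2(\tG;\RR)$, so $\mathrm{Im}\,\Psi\subseteq\mathrm{Ker}\big(\pi^*\colon H_{\grp}^2(G;\RR)\to H_{\grp}^2(\tG;\RR)\big)$; conversely, if $\alpha=c_G(\beta)$ with $\pi^*\alpha=0$ then $\pi^*\beta\in\mathrm{Ker}\,c_{\tG}=\mathrm{Im}\,\delta$, so $\pi^*\beta=[\delta\tmu]_b$ for some $\tmu$ and $\Psi(\tmu)=\alpha$. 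Thus $\mathrm{Im}\,\Psi=\mathrm{Im}\,c_G\cap\mathrm{Ker}\big(\pi^*\colon H_{\grp}^2(G;\RR)\to H_{\grp}^2(\tG;\RR)\big)$, and it remains to recognize this last kernel as $\mathrm{Im}(B\iota^*)$. For that I would feed the central extension into the five-term exact sequence, which identifies $\mathrm{Ker}\big(\pi^*\colon H_{\grp}^2(G)\to H_{\grp}^2(\tG)\big)$ with the image of the transgression $\Hom(\pi_1(G),\RR)\to H_{\grp}^2(G;\RR)$, $h\mapsto h_*e$, where $e$ is the extension class; and, $BG$ being simply connected, Hurewicz and universal coefficients give $H^2(BG;\RR)\cong\Hom(\pi_1(G),\RR)$, under which $B\iota^*$ is exactly $h\mapsto h_*(B\iota^*\mathfrak{o})=h_*e$, once one knows that $B\iota^*$ of the universal obstruction class $\mathfrak{o}\in H^2(BG;\pi_1(G))$ is the extension class $e$.

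For the kernel: $\Psi(\tmu)=0$ means $(\pi^*)^{-1}[\delta\tmu]_b\in\mathrm{Ker}\,c_G=\mathrm{Im}\,\delta$, i.e.\ $[\delta\tmu]_b=[\delta(\pi^*\nu)]_b$ for some $\nu\in Q(G)$, i.e.\ $\tmu-\pi^*\nu\in\mathrm{Ker}\,\delta=H_{\grp}^1(\tG;\RR)$; hence $\mathrm{Ker}\,\Psi=H_{\grp}^1(\tG;\RR)+\pi^*Q(G)$ and $\Psi$ descends to the claimed isomorphism. I expect the real obstacle to be the bookkeeping in the image computation --- verifying that the algebraic class $e$ of $1\to\pi_1(G)\to\tG\to G\to 1$ equals $B\iota^*$ of the universal topological obstruction, and fitting the transgression, the comparison maps, and the Hurewicz isomorphism into one commuting diagram. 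If one prefers to avoid the amenability input, $\Psi$ can instead be built by hand from a set-theoretic section of $\pi$ and the defect of $\tmu$, in which case the delicate point migrates to surjectivity: promoting a bounded primitive of a cocycle representing $h_*e$ to an honest homogeneous quasi-morphism on $\tG$ restricting to $h$ on $\pi_1(G)$.
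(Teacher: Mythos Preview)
Your proof is correct and follows essentially the same route as the paper: the paper also identifies the map as the composite $c_G \circ (\pi^*)^{-1} \circ \mathbf{d}$ (see the proof of Theorem \ref{thm:isomorphism_theorem_general}), uses injectivity of $\pi^*$ on bounded cohomology (phrased via Bouarich's exact sequence rather than amenability of $\pi_1(G)$, though these coincide here), and identifies $\mathrm{Im}(B\iota^*)$ with $\mathrm{Im}(\tau)$ via the spectral-sequence comparison of Lemma \ref{lemma:tau_equal_Bid_2} and Remark \ref{remark:derivation_is_isom}. The only cosmetic difference is that the paper first constructs the map $\mathfrak{d}$ explicitly at the cocycle level (Section \ref{cohom class sec}) before observing that it agrees with your composite $\Psi$, whereas you define $\Psi$ directly.
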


In the present paper, we apply the above results to the group of (contact) Hamiltonian diffeomorphisms.
As an important topic of symplectic and contact topology, many researchers have studied quasi-morphisms on these groups (for examples, see \cite{EP03}, \cite{GG}, \cite{Py6-1}, \cite{FOOO}, \cite{BZ} and \cite{FPR18}).
By combining these outcomes and our results (Theorem \ref{thm main} and Corollary \ref{cor main}), we obtain some results on the \emph{ordinary} group cohomology of these groups (see Corollaries \ref{cor bounded and unbounded} and \ref{nonflat}, Example \ref{eg1}, \ref{eg2}, \ref{eg3} and \ref{eg4}, Corollary \ref{bdd cohom on Ham shelukhin}, Proposition \ref{s3 bdl odd even} and \ref{stably nt}).

\begin{remark}
  Let $G = \Homeo_+(S^1)$ be the group of orientation preserving homeomorphisms of the circle.
  By the theorem of Thurston \cite{MR339267}, we have $H_{\grp}^2(G;\RR) = \mathrm{Im}(B\iota^*) \cong \RR \cdot e$, where $e$ is the Euler class of $\Homeo_+(S^1)$.
  It is known that the space $Q(\tG)$ is spanned by Poincar\'{e}'s rotation number $\mathrm{rot}\colon \tG \to \RR$, that is, $Q(\tG) \cong \RR \cdot \mathrm{rot}$ (see \cite{MR1876932}).
  Therefore we have $Q(\tG) \cong H_{\grp}^2(G;\RR)$.
  Note that the cohomology $H_{\grp}^2(G;\RR)$ is equal to $\mathrm{Im}(B\iota^*) \cap \mathrm{Im}(c_{G})$ since the Euler class is bounded.
  Moreover, the space $Q(\tG)$ is equal to $Q(\tG) /\left(H_{\grp}^1(\tG;\RR) + \pi^*Q(G)\right)$ since $G$ is uniformly perfect and $\tG$ is perfect.
  Thus, Theorem \ref{thm main} can be seen as a generalization of this isomorphism to an arbitrary topological group.
\end{remark}

\section{Applications to symplectic and contact geometry}\label{section applications}

We apply Corollary \ref{cor main} to symplectic and contact geometry.
A symplectic manifold $(M,\omega)$ has the natural transformation group $\Ham(M,\omega)$ called the \textit{group of Hamiltonian diffeomorphisms} \cite[DEFINITION 4.2.4.]{Ba97}, \cite[Subsection 1.2]{PR}.
A contact manifold $(M,\xi)$ also has the natural transformation group $\Cont_0(M,\xi)$ called the {\it group of contact Hamiltonian diffeomorphisms} \cite{G}.

\subsection{Boundedness of characteristic classes}\label{sec boundedness of ch class}

It is an interesting and difficult problem to determine whether a given characteristic class is bounded. %or not.
The Milnor-Wood inequality (\cite{Mi58}, \cite{Wo71}) asserts that the Euler class of foliated $SL(2,\RR)$-bundles (and foliated $\Homeo_+(S^1)$-bundles) is bounded.
It was shown that any element of $\mathrm{Im}(B\iota^*)$ is bounded for any real algebraic subgroups of $GL(n,\RR)$ (\cite{Gr82}) and for any virtually connected Lie group with linear radical (\cite{Ch}).

As far as the authors know, for homeomorphism groups and diffeomorphism groups, in contrast, the boundedness of characteristic classes is known only for the following specific examples.
\begin{itemize}
  \item The Euler class of $\Homeo_+(S^1)$ is bounded \cite{Wo71}.
  \item The Godbillon-Vey class integrated along the fiber on $\Diff_+(S^1)$ is unbounded \cite{MR298692}.
  \item Any non-zero cohomology class of $\Homeo_0(\RR^2)$ is unbounded \cite{Cal04}.
  \item Any non-zero cohomology class of $\Homeo_0(T^2)$ is unbounded, where $T^2$ is the two-dimensional torus \cite{MR}.
  \item Some cohomology classes of $\Homeo_0(M)$ are unbounded, where $M$ is a closed Seifert-fibered $3$-manifold such that the inclusion $SO(2) \to \Homeo_0(M)$ induces an inclusion of $\pi_1(SO(2))$ as a direct factor in $\pi_1(\Homeo_0(M))$ \cite{Mann20} (see also \cite{MN21}).
\end{itemize}

Using Corollary \ref{cor main}, we show the boundedness and unboundedness of characteristic classes on (contact) Hamiltonian diffeomorphism groups.
Let us consider the symplectic manifold $(S^2 \times S^2, \omega_{\lambda})$ and the contact manifold $(S^3,\xi)$.
The symplectic form $\omega_{\lambda}$ is defined by $\omega_\lambda=\mathrm{pr}_1^\ast\omega_0+\lambda\cdot\mathrm{pr}_2^\ast\omega_0$, where $\omega_0$ is the area form on $S^2$ and $\mathrm{pr}_j\colon S^2 \times S^2 \to S^2$ is the $j$-th projection.
The contact structure $\xi$ is the standard one on $S^3$.
%and $\omega_\lambda$ is a symplectic form defined by $\omega_\lambda=\mathrm{pr}_1^\ast\omega_0+\lambda\cdot\mathrm{pr}_2^\ast\omega_0$ with $\lambda>1$, $\omega_0$ is the standard symplectic form on $S^2$ and $\mathrm{pr}_1$, $\mathrm{pr}_2$ are the first, second projection, respectively.

To simplify the notation, we set $G_{\lambda} = \Ham(S^2 \times S^2, \omega_{\lambda})$ and $H = \Cont_0(S^3, \xi)$.
For $1 < \lambda \leq 2$, we have $H^2(BG_{\lambda};\ZZ) \cong \ZZ$ and $H^2(BH;\ZZ) \cong \ZZ$ (see Section \ref{section top int}).
Let $\mathfrak{o}_{G_{\lambda}} \in H^2(BG_{\lambda};\ZZ)$ and $\mathfrak{o}_H \in H^2(BH;\ZZ)$ be the generators (or the ``primary obstruction classes with coefficients in $\ZZ$'' of $G_{\lambda}$-bundles and $H$-bundles, respectively).

\begin{comment}
If $1 < \lambda \leq 2$, the fundamental group $\pi_1(\Ham(S^2 \times S^2, \omega_{\lambda}))$ is isomorphic to $\ZZ \times \ZZ/2\ZZ \times \ZZ/2\ZZ$ (\cite{A}).
Let
\begin{align}\label{homom_phi}
  \phi \colon \pi_1(\Ham(S^2 \times S^2, \omega_{\lambda})) \cong \ZZ \times \ZZ/2\ZZ \times \ZZ/2\ZZ \to \ZZ
\end{align}
be the homomorphism sending $(n, a, b) \in \ZZ \times \ZZ/2\ZZ \times \ZZ/2\ZZ$ to $n \in \ZZ$.
Then there exists a characteristic class
\[
  \phi_*\mathfrak{o}_{S^2 \times S^2} \in H^2(B\Ham(S^2 \times S^2, \omega_{\lambda});\ZZ),
\]
where $\mathfrak{o}_{S^2 \times S^2} \in H^2(B\Ham(S^2 \times S^2, \omega_{\lambda});\pi_1(\Ham(S^2 \times S^2, \omega_{\lambda})))$ is the primary obstruction class of $\Ham(S^2 \times S^2, \omega_{\lambda})$-bundles.

Because the fundamental group $\pi_1(\Cont_0(S^3,\xi))$ is isomorphic to $\ZZ$ (\cite{El}), the primary obstruction class $\mathfrak{o}_{S^3}$ for $\Cont_0(S^3,\xi)$-bundles is defined as an element of $H^2(B\Cont_0(S^3,\xi);\ZZ)$.
%\[
%  \mathfrak{o}_{S^3} \in H^2(B\Cont_0(S^3,\xi);\ZZ).
%\]
\end{comment}

Using Corollary \ref{cor main}, we can clarify the difference between these classes in terms of the boundedness.
For any $c \in H_{\grp}^2(G;\ZZ)$, let $c_{\RR} \in H_{\grp}^2(G;\RR)$ denote the corresponding cohomology class with coefficients in $\RR$.

\begin{corollary}\label{cor bounded and unbounded}
  The following properties hold.
  \begin{enumerate}
    \item The cohomology class
    \[
      B\iota^*(\mathfrak{o}_{G_{\lambda}})_{\RR} \in H_{\grp}^2(G_{\lambda};\RR)
    \]
    is bounded.
    \item The cohomology class
    \[
      B\iota^*(\mathfrak{o}_{H})_{\RR} \in H_{\grp}^2(H;\RR)
    \]
    is unbounded.
  \end{enumerate}
\end{corollary}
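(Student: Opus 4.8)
The plan is to obtain both statements from Corollary~\ref{cor main} together with the topological description of the isomorphism established in Section~\ref{section top int}, namely that for $\tmu\in Q(\widetilde G)$ the class corresponding to $[\tmu]$ is $(B\iota)^*\bigl((\tmu|_{\pi_1(G)})_*\mathfrak{o}\bigr)$, where $\mathfrak{o}\in H^2(BG;\pi_1(G))$ is the primary obstruction class of $G$-bundles and $(\tmu|_{\pi_1(G)})_*$ is the coefficient change along $\tmu|_{\pi_1(G)}\colon\pi_1(G)\to\RR$. Write $\widetilde{G}_{\lambda}$ and $\widetilde{H}$ for the universal coverings of $G_\lambda$ and $H$.

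\emph{Part (1).} I would first recall (see \cite{A} and Section~\ref{section top int}) that for $1<\lambda\le 2$ we have $\pi_1(G_\lambda)\cong\ZZ\oplus\ZZ/2\ZZ\oplus\ZZ/2\ZZ$ and that $\mathfrak{o}_{G_\lambda}=\phi_*\mathfrak{o}$ under this identification, where $\phi\colon\pi_1(G_\lambda)\to\ZZ$ is the projection onto the free factor. The symplectic input is that $\widetilde{G}_{\lambda}$ carries a spectral (Calabi-type) homogeneous quasi-morphism $\tmu$ built from quantum homology (see \cite{EP03} and \cite{FOOO}) whose restriction to $\pi_1(G_\lambda)$ is nonzero on the free factor; since a homogeneous quasi-morphism kills torsion, $\tmu|_{\pi_1(G_\lambda)}$ is a nonzero multiple of $\phi$ followed by $\ZZ\hookrightarrow\RR$, and after rescaling we may assume it equals this composite. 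Since $H^1_{\grp}(\widetilde{G}_{\lambda};\RR)=0$ (the universal cover of the Hamiltonian group of a closed symplectic manifold is perfect), Corollary~\ref{cor main} applies; by the topological description above, $B\iota^*(\mathfrak{o}_{G_\lambda})_{\RR}=(B\iota)^*\bigl((\tmu|_{\pi_1(G_\lambda)})_*\mathfrak{o}\bigr)$ lies in the image of the isomorphism of Corollary~\ref{cor main}, i.e.\ in $\mathrm{Im}(c_{G_\lambda})\cap\mathrm{Im}(B\iota^*)$, and hence is bounded (and, as $\tmu$ does not descend, nonzero).

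\emph{Part (2).} Here $\pi_1(H)\cong\ZZ$ is generated by the loop of the Reeb (Hopf) flow \cite{El} and $\mathfrak{o}_H$ is the corresponding primary obstruction class. The geometric input I would use is that this generator is \emph{bounded} in $\widetilde{H}$ — equivalently, every homogeneous quasi-morphism on $\widetilde{H}$ vanishes on $\pi_1(H)$ — which follows from the non-orderability of $(S^3,\xi_{\mathrm{std}})$ \cite{EKP}; in fact $\widetilde{H}$ is uniformly perfect, so $H^1_{\grp}(\widetilde{H};\RR)=0$ and $Q(\widetilde{H})=0$. Since $H=\Cont_0(S^3,\xi)$ is (uniformly) perfect we also have $Q(H)=0$, so $Q(\widetilde{H})/\pi^*Q(H)=0$ and Corollary~\ref{cor main} gives $\mathrm{Im}(c_H)\cap\mathrm{Im}(B\iota^*)=0$. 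It remains to see that $B\iota^*(\mathfrak{o}_H)_{\RR}\ne 0$: under $B\iota$ the class $\mathfrak{o}_H$ restricts to the class of the central extension $1\to\ZZ\to\widetilde{H}\to H\to 1$, so $B\iota^*(\mathfrak{o}_H)_{\RR}$ classifies its pushout $1\to\RR\to E\to H\to 1$ along $\ZZ\hookrightarrow\RR$; if this were split, the composite of the canonical inclusion $\widetilde{H}\hookrightarrow E$ with a retraction $E\to\RR$ would be a homomorphism $\widetilde{H}\to\RR$ nonzero on $\pi_1(H)$, contradicting $H^1_{\grp}(\widetilde{H};\RR)=0$. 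Thus $B\iota^*(\mathfrak{o}_H)_{\RR}$ is a nonzero element of $\mathrm{Im}(B\iota^*)$ lying outside $\mathrm{Im}(c_H)$, i.e.\ it is unbounded.

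The hard part is the geometric input in Part~(2): that the Reeb loop generating $\pi_1(\Cont_0(S^3,\xi))$ has bounded commutator length in the universal covering group, so that no homogeneous quasi-morphism detects it. This is precisely where the non-orderability of $(S^3,\xi_{\mathrm{std}})$ is used, and it is the single non-formal step; by contrast the corresponding ingredient in Part~(1) — the nonvanishing of a spectral quasi-morphism on the free factor of $\pi_1(\Ham(S^2\times S^2,\omega_\lambda))$ — is available in the literature.
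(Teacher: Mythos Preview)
Your proposal is correct and follows essentially the same route as the paper: for (1), a non-descending spectral quasi-morphism on $\widetilde{G}_\lambda$ whose restriction to $\pi_1(G_\lambda)$ is a nonzero multiple of $\phi$ (the paper invokes Ostrover's quasi-morphism \cite{Os} and Proposition~\ref{ostrover loop} specifically, rather than \cite{EP03} or \cite{FOOO}), and for (2), $Q(\widetilde{H})=0$ from the uniform perfectness of $\widetilde{\Cont}_0(S^3,\xi)$ (the paper cites \cite{FPR18} for this rather than non-orderability). Your central-extension argument for the nonvanishing of $B\iota^*(\mathfrak{o}_H)_\RR$ is precisely the content of Corollary~\ref{cor:Biota_injective}, which the paper packages together with the rest as Corollary~\ref{cor:bounded_unbounded_qm}.
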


\begin{comment}
\begin{corollary}\label{cor bounded and unbounded}
  The following properties hold.
  \begin{enumerate}
    \item The cohomology class
    \[
      (B\iota)^*(\phi_*\mathfrak{o}_{S^2 \times S^2})_{\RR} \in H_{\grp}^2(\Ham(S^2 \times S^2, \omega_{\lambda});\RR)
    \]
    is bounded.
    \item The cohomology class
    \[
      (B\iota)^*(\mathfrak{o}_{S^3})_{\RR} \in H_{\grp}^2(\Cont_0(S^3,\xi);\RR)
    \]
    is unbounded.
  \end{enumerate}
\end{corollary}
\end{comment}

We will prove Corollary \ref{cor bounded and unbounded} in Section \ref{section top int}.
In order to show Corollary \ref{cor bounded and unbounded}, we use Ostrover's Calabi quasi-morphism, which is a Hamiltonian Floer theoretic invariant.
% (Subsection \ref{ostrover survey}).

Moreover, we will show the Milnor-Wood type inequality in Section \ref{MW ineq} (Theorem \ref{MW inequality}).
Applying it to the obstruction class $(\mathfrak{o}_{G_{\lambda}})_{\RR}$, we obtain the following:
\begin{corollary}\label{nonflat}
  Let $\Sigma_h$ be a closed orientable surface of genus $h\geq 1$.
  Then, there exist infinitely many isomorphism classes of Hamiltonian fibrations over $\Sigma_h$ with the structure group $G_{\lambda} = \Ham(S^2 \times S^2, \omega_{\lambda})$ which do not admit foliated $G_{\lambda}$-bundle structures.
\end{corollary}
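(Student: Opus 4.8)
The plan is to feed the real obstruction class $(\mathfrak{o}_{G_\lambda})_\RR$ into the Milnor--Wood type inequality of Theorem \ref{MW inequality}. By Corollary \ref{cor bounded and unbounded}(1) the class $B\iota^*(\mathfrak{o}_{G_\lambda})_\RR \in H_{\grp}^2(G_\lambda;\RR)$ is bounded, hence lies in $\mathrm{Im}(c_{G_\lambda}) \cap \mathrm{Im}(B\iota^*)$, so Theorem \ref{MW inequality} will apply to it and produce a constant $C \ge 0$ (depending on $\lambda$ and $h$), uniform over all foliated $G_\lambda$-bundles over $\Sigma_h$, such that
\[
  \bigl| \langle B\iota^*(\mathfrak{o}_{G_\lambda})_\RR,\, [\Sigma_h] \rangle \bigr| \le C,
\]
the left-hand side being evaluated through the classifying map $\Sigma_h \to BG_\lambda^{\delta}$ of the foliated structure. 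Thus, if a Hamiltonian fibration over $\Sigma_h$ were to admit a foliated $G_\lambda$-bundle structure, then the evaluation of $(\mathfrak{o}_{G_\lambda})_\RR$ on its underlying principal $G_\lambda$-bundle (taken via $B\iota$) would have absolute value at most $C$.

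Next I would check that $\mathfrak{o}_{G_\lambda}$ takes arbitrarily large integer values on $G_\lambda$-bundles over $\Sigma_h$. Since $\widetilde{G_\lambda} \to G_\lambda$ is a universal covering, $BG_\lambda$ is simply connected with $\pi_2(BG_\lambda) \cong \pi_1(G_\lambda) \cong \ZZ \times \ZZ/2\ZZ \times \ZZ/2\ZZ$, and as $\Sigma_h$ is a two-dimensional CW complex, isomorphism classes of principal $G_\lambda$-bundles over $\Sigma_h$ correspond bijectively to $H^2(\Sigma_h;\pi_1(G_\lambda)) \cong \pi_1(G_\lambda)$ via the primary obstruction to a section; under this correspondence $\langle (\mathfrak{o}_{G_\lambda})_\RR, E\rangle$ equals the image in $\ZZ$ of the corresponding element of $\pi_1(G_\lambda)$ under the projection to the $\ZZ$-factor (cf.\ Section \ref{section top int}). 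For each $n \in \ZZ$ I would take the principal bundle $E_n \to \Sigma_h$ corresponding to $(n,0,0)$ --- concretely, obtained from the trivial bundle over the complement of an open $2$-disk in $\Sigma_h$ by regluing the disk along a clutching loop representing $(n,0,0) \in \pi_1(G_\lambda)$ --- and let $F_n$ denote the associated Hamiltonian fibration; then $\langle (\mathfrak{o}_{G_\lambda})_\RR, E_n \rangle = n$.

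Finally, since the evaluation $\langle (\mathfrak{o}_{G_\lambda})_\RR, - \rangle$ is an isomorphism invariant, the fibrations $F_n$ with $|n| > C$ are pairwise non-isomorphic, and by the first paragraph none of them carries a foliated $G_\lambda$-bundle structure; as there are infinitely many such $n$, the corollary follows. The only genuinely new input is Theorem \ref{MW inequality} (proved in Section \ref{MW ineq}); everything else is formal, resting on the two facts that $\mathfrak{o}_{G_\lambda}$ is bounded and that it is of infinite order in $H^2(BG_\lambda;\ZZ)$. I therefore expect the main obstacle to lie entirely in establishing the Milnor--Wood type inequality itself --- that is, in extracting from a homogeneous quasi-morphism on $\widetilde{G_\lambda}$ a uniform estimate on the evaluation of the associated bounded class over surface groups --- rather than in the deduction sketched here.
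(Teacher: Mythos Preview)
Your proposal is correct and follows essentially the same route as the paper: apply the Milnor--Wood type inequality (Theorem~\ref{MW inequality}) to the bounded class $B\iota^*(\mathfrak{o}_{G_\lambda})_\RR$, then exhibit bundles $E_n$ over $\Sigma_h$ with characteristic number $n$ via a clutching construction. The paper organizes the same argument by first proving a general statement (Theorem~\ref{general nonflat}) for any $G$ admitting a $\mu\in Q(\tG)$ with $\mu(\gamma)\neq 0$ for some $\gamma\in\pi_1(G)$, and then invoking Ostrover's Calabi quasi-morphism; your version starts from Corollary~\ref{cor bounded and unbounded}(1) instead, but this amounts to the same input since that corollary is itself proved using Ostrover's quasi-morphism. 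One small point you leave implicit, which the paper spells out, is that for a bundle $E$ admitting a foliated structure with classifying map $f_\delta$, one has $f_\delta^*(B\iota^*(\mathfrak{o}_{G_\lambda})_\RR)=(\mathfrak{o}_{G_\lambda})_\RR(E)$, so that the Milnor--Wood bound on the foliated characteristic number is indeed a bound on the ordinary one.
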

\begin{comment}
Applying it to the obstruction class $(\phi_*\mathfrak{o}_{S^2 \times S^2})_{\RR}$, we obtain the following:
\begin{corollary}\label{nonflat}
  Let $\Sigma_h$ be a closed orientable surface of genus $h\geq 2$.
  There exists infinitely many isomorphism classes of Hamiltonian fibrations over $\Sigma_h$ with the structure group $\Ham(S^2 \times S^2, \omega_{\lambda})$ which do not admit foliated bundle structures.
\end{corollary}
\end{comment}

\begin{remark}
  The boundedness of $c$ and $c_{\RR}$ are equivalent, that is, the integer cohomology class $c$ is bounded if and only if the real cohomology class $c_{\RR}$ is bounded (this is shown by the same arguments in \cite[Lemma 29]{Ch}).
  Hence, the statement same as in Corollary \ref{cor bounded and unbounded} holds for the integer coefficients cohomology classes $B\iota^*(\mathfrak{o}_{G_{\lambda}})$ and $B\iota^*(\mathfrak{o}_{H})$.
\end{remark}

\begin{comment}
\begin{remark}
  The boundedness of $c$ and $c_{\RR}$ are equivalent, that is, the integer cohomology class $c$ is bounded if and only if the real cohomology class $c_{\RR}$ is bounded (this is shown by the same arguments in \cite[Lemma 29]{Ch}).
  Thus, Corollary \ref{cor bounded and unbounded} holds for the integer coefficients cohomology classes $(B\iota)^*\phi_*\mathfrak{o}_{S^2 \times S^2}$ and $(B\iota)^*\mathfrak{o}_{S^3}$.
\end{remark}
\end{comment}

Corollaries \ref{cor bounded and unbounded} and \ref{nonflat} will be restated in more general form (see Corollary \ref{cor:bounded_unbounded_qm} and Theorem \ref{general nonflat}, respectively).

\subsection{Cohomology of (contact) Hamiltonian diffeomorphism group}\label{subsec nt coh}

Many researchers have constructed non-trivial cohomology classes of $\Ham(M,\omega)$ and $\Cont_0(M,\xi)$ (associated with the discrete topology or the $C^\infty$-topology) as characteristic classes of some (contact) Hamiltonian fibrations (\cite{R}, \cite{JK}, \cite{GKT}, \cite{Mc04}, \cite{SS}, \cite{M}, \cite{CS}).
%Many researchers also have constructed non-trivial homogeneous quasi-morphisms on $\Ham(M,\omega)$ (\cite{BG}, \cite{EP03}, \cite{GG}, \cite{Py6-1}, \cite{Py6-2}, \cite{Mc10}, \cite[THEOREM 1.10 (1)]{FOOO}, \cite{I}, \cite{Br} \textit{et. al.}).
Many non-trivial homogeneous quasi-morphisms on $\Ham(M,\omega)$ also have been obtained in several papers (\cite{BG}, \cite{EP03}, \cite{GG}, \cite{Py6-1}, \cite{Py6-2}, \cite{Mc10}, \cite[THEOREM 1.10 (1)]{FOOO}, \cite{I}, \cite{Br} \textit{et. al.}).
% or $\widetilde{\Ham}(M,\omega)$ (\cite{Sh} \textit{et. al.}), where $\pi_\omega\colon\widetilde{\Ham}(M,\omega)\to\Ham(M,\omega)$ is the universal covering of $\Ham(M,\omega)$ associated with the $C^\infty$-topology.
From these homogeneous quasi-morphisms on $\Ham(M,\omega)$, we can construct non-trivial elements of $H_b^2\left(\Ham(M,\omega);\RR\right)$ under the canonical map
\[
  \mathbf{d} \colon Q(\Ham(M,\omega)) \to H_b^2(\Ham(M,\omega);\RR)
\]
(for the map $\mathbf{d}$, see Section \ref{prel}).
%in (\ref{ex seq qm bdd coh}).
Note that the classes obtained by this map are trivial as ordinary group cohomology classes in $H_{\grp}^2(\Ham(M,\omega);\RR)$.
% or $H_b^2\left(\widetilde{\Ham}(M,\omega);\RR\right)$ from their quasi-morphisms.

%We have the following corollary of Theorem \ref{main theorem on cohomology}.

On the other hand, using Corollary \ref{cor main} and homogeneous quasi-morphisms on the universal covering groups, we can construct non-trivial second bounded cohomology class of $\Ham(M,\omega)$ and $\Cont_0(M,\xi)$, which are also non-trivial as \emph{ordinary} cohomology classes.
Note that the universal covering $\widetilde{\Ham}(M,\omega)$ and $\widetilde{\Cont}_0(M,\xi)$ are perfect for closed symplectic and contact manifolds (\cite{Ba78}, \cite{Ry}).
Therefore these groups satisfy the assumption in Corollary \ref{cor main}.

In the following cases, Corollary \ref{cor main} provides non-trivial cohomology classes in $H_{\grp}^2(\Ham(M,\omega);\RR)$ and $H_{\grp}^2(\Cont_0(M,\xi);\RR)$.
\begin{example}\label{eg1}
Ostrover \cite{Os} constructed quasi-morphism $\mu^\lambda$ on $\widetilde{\Ham}(S^2\times S^2,\omega_\lambda)$ for $\lambda >1$. %, where $\omega_\lambda=\mathrm{pr}_1^\ast\omega_0+\lambda\cdot\mathrm{pr}_2^\ast\omega_0$ with $\lambda>1$, $\omega_0$ is the standard symplectic form on $S^2$ and $\mathrm{pr}_1$, $\mathrm{pr}_2$ are the first, second projection, respectively.
The homogeneous quasi-morphism $\mu^\lambda$ does not descend to $\Ham(S^2\times S^2,\omega_\lambda)$ (Proposition \ref{ostrover loop}).
%His quasi-morphisms do not vanish on $\pi_1\left(\Ham(M,\omega)\right)\subset\widetilde{\Ham}(M,\omega)$.
Hence, we obtain a non-trivial cohomology class of $\Ham(S^2\times S^2,\omega_\lambda)$ from $\mu^\lambda$.
%One can see $\iota^\omega([\mu^\lambda])$ as an integral cohomology class (Example \ref{integral ostrover}).
\end{example}

\begin{example}\label{eg3}
Ostrover and Tyomkin \cite{OT} constructed two homogeneous quasi-morphisms $\mu^1,\mu^2\colon\widetilde{\Ham}(M,\omega)\to\RR$ when $(M,\omega)$ is the $1$ points blow up of $\mathbb{C}P^2$ with some toric Fano symplectic form.
The restrictions of $\mu_1,\mu_2$ to $\pi_1\left(\Ham(M,\omega)\right)$ are linear independent.
Hence, Corollary \ref{cor main} implies the dimension of $H_{\grp}^2\left(\Ham(M,\omega);\mathbb{R}\right)$ is larger than one.
\end{example}
\begin{example}\label{eg2}
Fukaya, Oh, Ohta and Ono \cite[THEOREM 1.10 (3)]{FOOO} constructed quasi-morphisms on $\widetilde{\Ham}(M,\omega)$ when $(M,\omega)$ is the $k$ points blow up of $\mathbb{C}P^2$ with some toric symplectic form, where $k\geq2$.
Their quasi-morphisms do not descend to $\Ham(M,\omega)$ \cite[THEOREM 30.13]{FOOO}.
Hence, we can construct a non-trivial element of $H_{\grp}^2\left(\Ham(M,\omega);\mathbb{R}\right)$ from their quasi-morphisms.
\end{example}

\begin{example}\label{eg4}
Givental \cite{Gi} constructed a homogeneous quasi-morphism $\mu$ on $\widetilde{\Cont}_0(\RR P^{2n+1},\xi)$ that is called the {\it non-linear Maslov index} (see also \cite{GBS}, \cite{BZ}). This quasi-morphism $\mu$ does not descend to $\Cont_0(\RR P^{2n+1},\xi)$. Hence we obtain a non-trivial element of $H_{\grp}^2(\Cont_0(\RR P^{2n+1},\xi);\RR)$.
\end{example}

In Section \ref{section top int}, we also show the following: %on the bonded cohomology.

\begin{corollary}\label{bdd cohom on Ham}
Let $(M,\omega)$ be a closed symplectic manifold.
Then there exists an injective homomorphism
\[\mathfrak{d}_b\colon Q(\widetilde{\Ham}(M,\omega))\to H_b^2(\Ham(M,\omega);\mathbb{R}).\]
\end{corollary}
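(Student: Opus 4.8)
The plan is to realise $\mathfrak{d}_b(\mu)$, for a homogeneous quasi-morphism $\mu$ on $\tG:=\widetilde{\Ham}(M,\omega)$, as the bounded cohomology class of an explicit bounded real $2$-cocycle on $G:=\Ham(M,\omega)$ obtained by pushing $\mu$ down the universal covering $\pi\colon\tG\to G$. Write $Z:=\ker\pi\cong\pi_1(G)$; since $\tG$ is a topological group and $\pi$ is a covering homomorphism, $Z$ is a discrete \emph{central} subgroup of $\tG$, in particular abelian (hence amenable), so the restriction $\mu|_Z\colon Z\to\RR$ is a homomorphism. Fix a set-theoretic section $s\colon G\to\tG$ of $\pi$ and put $\tau(g_1,g_2):=s(g_1)s(g_2)s(g_1g_2)^{-1}\in Z$, the associated $Z$-valued $2$-cocycle of the central extension $1\to Z\to\tG\xrightarrow{\pi}G\to1$. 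I then set
\[
  \beta_\mu(g_1,g_2):=\mu(s(g_1))+\mu(s(g_2))-\mu(s(g_1g_2))-\mu|_Z(\tau(g_1,g_2)),\qquad \mathfrak{d}_b(\mu):=[\beta_\mu]\in H_b^2(G;\RR),
\]
and the bulk of the proof is to check the following four points.

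\emph{(i) $\beta_\mu$ is bounded.} From $s(g_1)s(g_2)=\tau(g_1,g_2)\,s(g_1g_2)$ and the fact that a homogeneous quasi-morphism is additive on commuting pairs (here $\tau(g_1,g_2)$ is central; see Section \ref{prel}) we get $\mu(s(g_1)s(g_2))=\mu|_Z(\tau(g_1,g_2))+\mu(s(g_1g_2))$, so $\beta_\mu(g_1,g_2)=\mu(s(g_1))+\mu(s(g_2))-\mu(s(g_1)s(g_2))$ is bounded by the defect of $\mu$. \emph{(ii) $\beta_\mu$ is a $2$-cocycle}, because $\beta_\mu=\delta(\mu\circ s)-\mu|_Z\circ\tau$ is the difference of the coboundary of a function on $G$ and the $\RR$-valued $2$-cocycle obtained by composing the $Z$-valued cocycle $\tau$ with the homomorphism $\mu|_Z$. \emph{(iii) $\beta_\mu$ is independent of the choice of $s$}: replacing $s$ by $c\cdot s$ with $c\colon G\to Z$ changes $\mu\circ s$ and $\mu|_Z\circ\tau$ by one and the same term (the linearisation of $c$), and these contributions cancel, once more by centrality of $Z$ and additivity of $\mu$ on commuting pairs; thus $\beta_\mu$, not merely its class, is canonical. \emph{(iv) $\mu\mapsto\beta_\mu$ is $\RR$-linear}, which is immediate from the formula together with $(\mu_1+\mu_2)|_Z=\mu_1|_Z+\mu_2|_Z$. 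Points (i)--(iv) show that $\mathfrak{d}_b\colon Q(\tG)\to H_b^2(G;\RR)$ is a well-defined homomorphism.

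It remains to prove injectivity. Suppose $\mathfrak{d}_b(\mu)=0$, say $\beta_\mu=\delta b$ for a bounded function $b\colon G\to\RR$. Pulling back along $\pi$, a short computation using additivity of $\mu$ on commuting pairs (applied to $\tg\cdot s(\pi\tg)^{-1}\in Z$) and the cocycle identity for $\tau$ gives $\pi^{*}\beta_\mu=\delta\mu$; hence $\delta(\mu-b\circ\pi)=0$, so $\mu-b\circ\pi$ is a homomorphism $\tG\to\RR$, and homogeneity of $\mu$ together with boundedness of $b\circ\pi$ forces $\mu=\mu-b\circ\pi\in\Hom(\tG,\RR)$. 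Since $M$ is closed, $\widetilde{\Ham}(M,\omega)$ is perfect by Banyaga's theorem \cite{Ba78}, hence $\Hom(\tG,\RR)=H_{\grp}^1(\tG;\RR)=0$ and $\mu=0$. (Equivalently: the identity $\pi^{*}\circ\mathfrak{d}_b=\mathbf{d}\colon Q(\tG)\to H_b^2(\tG;\RR)$, together with Gromov's mapping theorem \cite{Gr82} --- applicable because $Z$ is amenable and normal --- shows $\mathfrak{d}_b=(\pi^{*})^{-1}\circ\mathbf{d}$, and $\mathbf{d}$ is injective on $Q(\tG)$ since $\ker\mathbf{d}=H_{\grp}^1(\tG;\RR)=0$; on this route one also sees that $c_G\circ\mathfrak{d}_b$ reproduces the construction underlying Corollary \ref{cor main}, whose source is $Q(\tG)/\pi^{*}Q(G)$.) The only genuinely delicate point of the whole argument is the cocycle bookkeeping with the section $s$ and the centrality of $Z$ appearing in (iii) and in the identity $\pi^{*}\beta_\mu=\delta\mu$; everything else is either formal or invokes Banyaga's perfectness (or, on the alternative route, the mapping theorem).
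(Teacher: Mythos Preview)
Your proof is correct and takes essentially the same approach as the paper. After your simplification in (i), the cocycle $\beta_\mu(g_1,g_2)=\mu(s(g_1))+\mu(s(g_2))-\mu(s(g_1)s(g_2))$ is exactly the paper's $\mathfrak{D}(\mu)$ (the paper establishes its well-definedness and the map $\mathfrak{d}_b$ in Sections~\ref{cohom class sec}--\ref{diagram sec}), and your injectivity argument via $\pi^*\beta_\mu=\delta\mu$ together with Banyaga's perfectness of $\widetilde{\Ham}(M,\omega)$ is precisely Lemma~\ref{lem inj to bdd coh}.
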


In \cite{Sh}, for every closed symplectic manifold $(M,\omega)$, Shelukhin constructed a non-trivial homogeneous quasi-morphism $\mu_S\colon \widetilde{\Ham}(M,\omega)\to\RR$.
Therefore, the following corollary follows from Corollary \ref{bdd cohom on Ham}.
\begin{corollary}\label{bdd cohom on Ham shelukhin}
For every closed symplectic manifold $(M,\omega)$, the bounded cohomology group $H_b^2(\Ham(M,\omega);\mathbb{R})$ is non-zero.
\end{corollary}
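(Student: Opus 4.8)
The plan is to obtain the statement as an immediate consequence of Corollary \ref{bdd cohom on Ham}, once a universally available quasi-morphism is fed in. First I would invoke Shelukhin's theorem \cite{Sh}: for \emph{every} closed symplectic manifold $(M,\omega)$ there is a non-trivial homogeneous quasi-morphism $\mu_S \colon \widetilde{\Ham}(M,\omega) \to \RR$. In particular $Q(\widetilde{\Ham}(M,\omega)) \neq 0$, as it contains the non-zero element $\mu_S$. (Since $\widetilde{\Ham}(M,\omega)$ is perfect by \cite{Ba78}, it has no non-trivial homomorphism to $\RR$, so $\mu_S$ is a genuine quasi-morphism and not a homomorphism; but only $\mu_S \neq 0$ is used below.)

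Next I would apply Corollary \ref{bdd cohom on Ham}, which provides an injective homomorphism $\mathfrak{d}_b \colon Q(\widetilde{\Ham}(M,\omega)) \to H_b^2(\Ham(M,\omega);\RR)$. Injectivity of $\mathfrak{d}_b$ together with $\mu_S \neq 0$ yields $\mathfrak{d}_b(\mu_S) \neq 0$, and hence $H_b^2(\Ham(M,\omega);\RR) \neq 0$, which is the assertion.

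All of the real content sits in the two imported ingredients, so for this corollary there is no serious obstacle — the argument is a short formal deduction. It is worth recording where the work actually lies. Shelukhin's construction is what guarantees that the source of $\mathfrak{d}_b$ is non-zero for an arbitrary closed symplectic manifold; without a universal supply of quasi-morphisms the conclusion could fail for any hypothetical $(M,\omega)$ with $Q(\widetilde{\Ham}(M,\omega)) = 0$. The genuinely new input is Corollary \ref{bdd cohom on Ham} itself, and the step I expect to be the main obstacle (there, not here) is upgrading the algebraic isomorphism of Theorem \ref{thm main} and Corollary \ref{cor main} — which a priori only yields an ordinary class in $H_{\grp}^2(\Ham(M,\omega);\RR)$, and only modulo the descendible part $\pi^*Q(\Ham(M,\omega))$ — to a bona fide \emph{bounded} class, represented by an explicit bounded cocycle built directly from a quasi-morphism on $\widetilde{\Ham}(M,\omega)$, in such a way that $\mathfrak{d}_b$ is injective on the whole space $Q(\widetilde{\Ham}(M,\omega))$ rather than merely on a quotient. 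Granting Corollary \ref{bdd cohom on Ham}, the deduction above completes the proof.
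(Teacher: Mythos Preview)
Your proposal is correct and follows exactly the paper's approach: the paper states that Shelukhin's construction \cite{Sh} supplies a non-trivial $\mu_S \in Q(\widetilde{\Ham}(M,\omega))$ for every closed $(M,\omega)$, and then notes that Corollary \ref{bdd cohom on Ham shelukhin} follows immediately from the injectivity in Corollary \ref{bdd cohom on Ham}. Your additional commentary about where the work lies is accurate but not part of the proof itself.
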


%In Section \ref{bounded section}, we explain other applications of Corollary \ref{bdd cohom on Ham}.

\begin{remark}\label{spectral invariant}
Quasi-morphisms in \cite{EP03}, \cite{Os}, \cite{OT}, \cite{Mc10}, \cite{FOOO}, \cite{U}, \cite{B}, \cite{C} and \cite{V18} are constructed via the Hamiltonian Floer theory.
As good textbooks on this topic, we refer to \cite{PR} and \cite{FOOO}.
\end{remark}

\begin{disclaimer}
Throughout the present paper, we tacitly assume that topological group $G$ is path-connected, locally path-connected, and semilocally simply-connected.
In particular, every topological group $G$ in the present paper admits the universal covering $\pi\colon\tG\to G$.
\end{disclaimer}

\subsection{Organization of the paper}

Section \ref{prel} collects preliminary facts.
Section \ref{cohom class sec} and Section \ref{diagram sec} are devoted to show an isomorphism theorem (Theorem \ref{thm:isomorphism_theorem_general}) for an arbitrary group extension.
In Section \ref{section top int}, we prove Theorem \ref{thm main} by applying the isomorphism theorem to a topological group and its universal covering.
We give applications in Section \ref{MW ineq} and Section \ref{nonex sec}.
In Section \ref{MW ineq}, we prove a Milnor-Wood type inequality and show the non-existence of foliated structures on Hamiltonian fibrations.
In Section \ref{nonex sec}, we consider an extension problem of homomorphisms on $\pi_1(G)$ to $\tG$.
In Appendix \ref{fibration sec}, we give examples of non-trivial (contact) Hamiltonian fibrations.
%In Section \ref{open problem section}, we provide several open problems. In Appendix, we show other exact sequences related to the space $\QQQ(\hG) / (\HHH^1(\bG)^\hG + i^* \QQQ(\hG))$ and the seven-term exact sequence of groups.

%%%%%%%%%%%%%%%%%%%%%%%%%%%
%%%%%%%%%%%%%%%%%%%%%%%%%%%
%%%%%%%%%%%%%%%%%%%%%%%%%%%

\section{Preliminaries}\label{prel}
\subsection{(Bounded) group cohomology and quasi-morphism}\label{subsec:prel_grp_bdd_qm}
We briefly review the (bounded) cohomology of (discrete) group and the quasi-morphism.
Let $G$ be a group and $\AR$ an abelian group.
Let $C_{\grp}^n(G ; \AR)$ denote the set of $n$-cochains $c \colon G^n \to \AR$ and $\delta \colon C_{\grp}^n(G ; \AR) \to C_{\grp}^{n+1}(G ; \AR)$ the coboundary map.
%We recall the definition of the coboundary map when $n=1$.
For $c \in C_{\grp}^1(G ; \AR)$, its coboundary $\delta c \in C_{\grp}^2(G ; \AR)$ is defined by
\[\delta c(g_1,g_2) = c(g_1)+c(g_2)-c(g_1g_2)\]
for $g_1,g_2 \in G$ (see \cite{Bro} for the precise definition of $\delta$).
The cohomology $H_{\grp}^\bullet(G;\AR)$ of the cochain complex $(C_{\grp}^\bullet(G ; \AR), \delta)$ is called the \emph{\textup{(}ordinary\textup{)} group cohomology of $G$}.

It is known that the cohomology of group $G$ is canonically isomorphic to the cohomology of classifying space $BG^{\delta}$ of discrete group $G^{\delta}$.
This isomorphism is given by an isomorphism of cochains (see, for example, \cite{D}).
Under this isomorphism, we identify $H^\bullet(BG^{\delta};\AR)$ with $H_{\grp}^\bullet(G;\AR)$.

Let $\AR = \ZZ$ or $\RR$.
Let $C_b^n(G;\AR)$ denote the set of bounded $n$-cochains, i.e., $c \in C_{\grp}^n(G;\AR)$ such that
\[\|c\|_{\infty} = \sup_{g_1,\dots,g_n \in G}|c(g_1,\dots,g_n)| < +\infty  .\]
The cohomology $H_b^\bullet(G,\AR)$ of the cochain complex $(C_b^\bullet(G ; \AR), \delta)$ is called the \emph{bounded cohomology of $G$}.
%Note that $C_b^n(G;\RR)$ is a subcomplex of $C^n(G;\RR)$.
The inclusion map from $C_b^\bullet(G;\AR)$ to $C_{\grp}^\bullet(G;\AR)$ induces the homomorphism $c_G \colon H_b^\bullet(G;\AR) \to H_{\grp}^\bullet(G;\AR)$, which is called the \textit{comparison map}.
\begin{comment}
\begin{remark}\label{rem:normalized_bdd_cocycle}
  Any second bounded cohomology class has a normalized representative, that is, for any $u \in H_b^2(G;\AR)$, there exists a bounded cocycle $c$ which represents $u$ such that $c(g_1, g_2) = 0$ if $g_1 = \id_G$ or $g_2 = \id_G$ (see \cite[Lemma 2.2]{Fr}).
\end{remark}
\end{comment}

\begin{definition}
A real-valued function $\mu$ on a group $G$ is called a \emph{quasi-morphism} if
\[D(\mu) = \sup_{g,h \in G}|\mu(gh)-\mu(g)-\mu(h)|\]
is finite.
The value $D(\mu)$ is called the {\it defect} of $\mu$.
%Such the smallest $C$ is called the \emph{defect} of $\mu$ and denoted by $D(\mu)$.
A quasi-morphism $\mu$ on $G$ is called \emph{homogeneous} if $\mu(g^n)=n\mu(g)$ for all $g \in G$ and $n \in \ZZ$.
%For a group $G$,
Let $Q(G)$ denote the real vector space of homogeneous quasi-morphisms on $G$.
\end{definition}

It is known that any homogeneous quasi-morphism is conjugation-invariant, that is, $\mu \in Q(G)$ satisfies
\begin{align}\label{conj_inv}
  \mu(ghg^{-1}) = \mu(h)
\end{align}
for any $g, h \in G$ (see \cite[Section 2.2.3]{Cal} for example).

By definition, the coboundary $\delta \mu$ of a homogeneous quasi-morphism $\mu \in Q(G)$ defines a bounded two-cocycle on $G$. %in $C_{\grp}^2(G;\RR)$.
This induces the following exact sequence
\begin{align}\label{ex seq qm bdd coh}
  0 \to H_{\grp}^1(G;\RR) \to Q(G) \xrightarrow{\mathbf{d}} H_b^2(G;\RR) \xrightarrow{c_G} H_{\grp}^2(G;\RR)
\end{align}
(see \cite[Theorem 2.50]{Cal} for example).

The following property of homogeneous quasi-morphisms is important in the present paper:
\begin{equation}\label{abel tashizan}
\mu(fg)=\mu(f)+\mu(g)=\mu(gf) \text{ for any }f,g\in G\text{ with }fg=gf
\end{equation}
(see \cite[Proposition 3.1.4]{PR} for example).

%\subsection{Ostrover's Calabi quasi-morphism}\label{ostrover survey}

%Let us consider the $4$-manifold $S^2 \times S^2$.
%For $\lambda\geq1$, let $\omega_\lambda$ be a symplectic form $\mathrm{pr}_1^\ast\omega_0+\lambda\cdot\mathrm{pr}_2^\ast\omega_0$ on $S^2 \times S^2$, where $\omega_0$ is the standard symplectic form on $S^2$ and $\mathrm{pr}_1$, $\mathrm{pr}_2$ are the first, second projection, respectively.

In the present paper, we often refer to Ostrover's Calabi quasi-morphism and so we explain here.
Let $(S^2 \times S^2, \omega_{\lambda})$ be the symplectic manifold defined in Subsection \ref{sec boundedness of ch class}.
Entov and Polterovich \cite{EP03} constructed a homogeneous quasi-morphism $\mu^1$ on $\widetilde{\Ham}(S^2\times S^2,\omega_1)$ using the Hamiltonian Floer theory.
More precisely, $\mu^1$ is constructed as the homogenization of Oh-Schwarz's spectral invariants, which is a Hamiltonian Floer theoretic invariant \cite{Sch}, \cite{Oh05}.
(See also Remark \ref{spectral invariant}.)
They also proved that $\mu^1$ descends to $\Ham(S^2\times S^2,\omega_1)$.

After their work, Ostrover \cite{Os} applied Entov-Polterovich's idea to $\widetilde{\Ham}(S^2\times S^2,\omega_\lambda)$ for $\lambda>1$ and studied a quasi-morphism $\mu^\lambda\colon \widetilde{\Ham}(S^2\times S^2,\omega_\lambda) \to \RR$.
%However, he also proved that $\mu^\lambda$ does not descend to $\Ham(S^2\times S^2,\omega_\lambda)$ by the below proposition (Proposition \ref{ostrover loop}).
In contrast to Entov-Polterovich's quasi-morphisms, Ostrover's Calabi quasi-morphism $\mu^\lambda$ does not descend to $\Ham(S^2\times S^2,\omega_\lambda)$.
\begin{comment}
This was shown as follows.}
For $\lambda \geq 1$, define a loop $\phi\colon S^1=\RR/\ZZ \to \Ham(S^2\times S^2,\omega_\lambda)$ by
\[
  \phi_t(z, w) = (z, \Phi_{z,t}(w)) \in S^2 \times S^2,
\]
where $\Phi_{z, t} \colon S^2 \to S^2$ is the $2\pi t$-rotation around the axis through the points $z$ and $-z$.
  Set $\tg = 2 [\phi_t] \in \pi_1(\Ham(S^2\times S^2,\omega_\lambda))$.
\begin{proposition}\label{ostrover loop}
We have that $\mu^\lambda(\tg) \neq 0$, in particular, $\mu^\lambda$ does not descend to $\Ham(S^2\times S^2,\omega_\lambda)$.
\end{proposition}
\end{comment}

\begin{proposition}[{\cite{Os}}]\label{ostrover loop}
For $\lambda>1$, there exists $\tg \in \pi_1(\Ham(S^2\times S^2,\omega_\lambda))$ such that $\mu(\tg) \neq 0$.
In particular, $\mu^\lambda$ does not descend to $\Ham(S^2\times S^2,\omega_\lambda)$.
\end{proposition}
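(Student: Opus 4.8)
The plan is to reduce everything to producing a single Hamiltonian loop on which $\mu^\lambda$ is nonzero, and then to compute its value via the spectral-invariant description of Ostrover's Calabi quasi-morphism. First note that the ``in particular'' clause is formal: any $\nu\in Q(\Ham(S^2\times S^2,\omega_\lambda))$ satisfies $\nu(\id)=0$, so $\pi^*\nu$ vanishes on $\ke\pi=\pi_1(\Ham(S^2\times S^2,\omega_\lambda))$; hence if $\mu^\lambda=\pi^*\nu$ were descendible we would get $\mu^\lambda(\tg)=0$, contradicting the first assertion. Next I would record that, since $\pi_1(\Ham(S^2\times S^2,\omega_\lambda))$ is central in $\tHam(S^2\times S^2,\omega_\lambda)$ and abelian, the restriction $\mu^\lambda|_{\pi_1}$ is an honest homomorphism by \eqref{abel tashizan}; thus it suffices to exhibit one Hamiltonian loop whose lift has nonzero $\mu^\lambda$-value.

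The natural candidate is the ``fibrewise rotation'' loop $\phi=\{\phi_t\}_{t\in\RR/\ZZ}$ defined by $\phi_t(z,w)=(z,\Phi_{z,t}(w))$, where $\Phi_{z,t}\colon S^2\to S^2$ is the rotation by angle $2\pi t$ about the axis through $z$ and $-z$; this is a genuine loop in $\Ham(S^2\times S^2,\omega_\lambda)$, generated by a Hamiltonian depending only on the position of $w$ relative to the $z$-axis, and one sets $\tg=2[\phi]\in\pi_1(\Ham(S^2\times S^2,\omega_\lambda))$ (the factor $2$ lands $\tg$ in the free part of the fundamental group and avoids the $2$-torsion summands). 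The value $\mu^\lambda(\tg)$ would then be computed from the definition of Ostrover's quasi-morphism as the homogenization of a spectral invariant $\rho(\,\cdot\,;e)$ attached to an idempotent $e$ of the quantum homology $\QH_*(S^2\times S^2,\omega_\lambda)$, which for $\lambda>1$ splits as a product of fields and so supplies such idempotents. Using the standard description of spectral invariants of Hamiltonian loops via the Seidel representation, $\rho$ of the lift of $\phi$ is governed by the valuation of the Seidel element $S(\phi)$ acting on $e$; I would compute $S(\phi)$ (an invertible class recording the symplectic area of the section of the associated fibration over $S^2$ swept out by $\phi$), multiply it into the idempotents, read off the spectral numbers, and homogenize.

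The point that makes $\mu^\lambda(\tg)\neq 0$ — and this is where I expect the real work to lie — is that the asymmetry $\lambda>1$ between the two $S^2$-factors forces the relevant idempotents of $\QH_*(S^2\times S^2,\omega_\lambda)$ to ``see'' different areas, so that the homogenized spectral invariant of $\tg$ does not cancel; for $\lambda=1$ this cancellation does occur, consistently with the fact recalled above that Entov--Polterovich's $\mu^1$ descends to $\Ham(S^2\times S^2,\omega_1)$. Carrying this out rigorously amounts to pinning down the Seidel element of $\phi$, computing the valuations of its action on the idempotents, and checking that the resulting number is genuinely nonzero rather than accidentally zero. Once this is done, $\mu^\lambda|_{\pi_1}\neq 0$, hence $\mu^\lambda\notin\pi^*Q(\Ham(S^2\times S^2,\omega_\lambda))$, and the proposition follows. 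Every other step in the argument is formal.
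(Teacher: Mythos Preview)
The paper does not supply its own proof of this proposition; it is simply attributed to Ostrover. Your proposal correctly reconstructs Ostrover's approach: the candidate loop $\phi_t(z,w)=(z,\Phi_{z,t}(w))$ with $\tg=2[\phi]$ is exactly the one used there (and indeed the paper's \LaTeX\ source contains, inside a commented-out block preceding the proposition, a description of precisely this loop and this choice of $\tg$). The formal reductions you give---that non-descent follows from a nonzero value on $\pi_1$, and that the restriction to $\pi_1$ is a genuine homomorphism by \eqref{abel tashizan}---are correct and are the same observations the paper relies on elsewhere.

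The one piece of actual content, as you yourself note, is the Seidel/valuation computation showing $\mu^\lambda(\tg)\neq 0$ for $\lambda>1$; you outline the mechanism (split $\QH_*$ into fields for $\lambda>1$, compute the Seidel element of $\phi$, read off valuations on the idempotents, homogenize) but do not carry it out. Since the paper does not carry it out either and merely cites the result, your proposal is at the same level of detail as the paper's treatment. To turn the plan into a self-contained proof you would still need to execute that computation, which is done in Ostrover's original article.
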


\subsection{Characteristic classes}
For a fibration, the \textit{primary obstruction class} is defined as an obstruction to the construction of a cross-section.
We briefly recall the definition of the obstruction class via the Serre spectral sequence (see \cite{W} for details).
Let $F \to E \to B$ be a fibration.
For simplicity, we suppose the following; the base space $B$ is one-connected, the fiber $F$ is path-connected, and the fundamental group $\pi_1(F)$ is abelian.
Let $(E_r^{p,q}, d_r^{p,q})$ be the Serre spectral sequence with coefficients in $\pi_1(F)$.
Since $B$ is one-connected, any local coefficient system on $B$ is simple, and therefore we have
\[
  E_2^{p,q} \cong H^p(B;H^q(F;\pi_1(F))).
\]
Hence we obtain $E_2^{2,0} \cong H^2(B;\pi_1(F))$ and $E_2^{0,1} \cong H^1(F;\pi_1(F))$.
Since the cohomology group $H^1(F;\pi_1(F))$ is isomorphic to % the self-homomorphisms
$\Hom(\pi_1(F);\pi_1(F))$, %on $\pi_1(F)$.
the derivation map $d_2^{0,1} \colon E_2^{0,1} \to E_2^{2,0}$ defines a map
\[
  d_2^{0,1} \colon \Hom(\pi_1(F),\pi_1(F)) \to H^2(B;\pi_1(F)).
\]
Here we abuse the symbol $d_2^{0,1}$.

We are now ready to state the definition of the primary obstruction class of fibrations.
\begin{definition}\label{def:obs_class_via_s.s.}
  Let $F \to E \to B$ be a fibration such that $B$ is one-connected, $F$ is path-connected, and $\pi_1(F)$ is abelian.
  Let $(E_r^{p,q}, d_r^{p,q})$ be the Serre spectral sequence of the fibration.
  The cohomology class $\mathfrak{o}(E) = -d_2^{0,1}(\id_{\pi_1(F)}) \in H^2(B;\pi_1(F))$ is called the \textit{primary obstruction class} of $E$, where $\id_{\pi_1(F)}\in \Hom(\pi_1(F),\pi_1(F))$ is the identity homomorphism.
\end{definition}

\begin{remark}\label{rem:obs_section}
  It is known that the above definition is equivalent to the classical definition of the obstruction class to the construction of a cross-section (see, for example, \cite[(6.10) Corollary in Chapter VI and (7.9*) Theorem in Chapter XIII]{W}).
\end{remark}

By the naturality of the spectral sequence, the primary obstruction class is a characteristic class.
Its universal element $\mathfrak{o}$ is given as the primary obstruction class of the principal universal bundle $G \to EG \to BG$.
Note that the classifying space $BG$ is one-connected and $\pi_1(G)$ is abelian.

%In particular, for principal $G$-bundles, the universal element $\mathfrak{o}$ of the primary obstruction class is given as an element of $H^2(BG;\pi_1(G))$.

\begin{remark}
  %For a topological group $G$, the primary obstruction class $\mathfrak{o}$ of $G$-bundles is an element of $H^2(BG;\pi_1(G))$.
  The class $\mathfrak{o}$ is also obtained as follows.
  By taking classifying spaces of the central extension $0 \to \pi_1(G) \to \tG \to G \to 1$, we obtain the following fibration
  \begin{align}\label{seq:classifying_spaces_univ_cover_top}
    B\pi_1(G) \to B\tG \to BG.
  \end{align}
  %Since we are assuming that $G$ is connected, the base space $BG$ is one-connected.
  Note that the fundamental group of $B\pi_1(G)$ is isomorphic to $\pi_1(G)$ and this is abelian.
  Then, the primary obstruction class of fibration (\ref{seq:classifying_spaces_univ_cover_top}) is the class $\mathfrak{o} \in H^2(BG;\pi_1(G))$.
\end{remark}

Let $f \colon \pi_1(G) \to \RR$ be a homomorphism and
\[
  f_* \colon H^{\bullet}(-;\pi_1(G)) \to H^{\bullet}(-;\RR)
\]
denote the change of coefficients homomorphism.
Let $(E_r^{p,q}, d_r^{p,q})$ be the Serre spectral sequence of (\ref{seq:classifying_spaces_univ_cover_top}) with coefficients in $\RR$.
Since $E_2^{0,1} \cong H^1(B\pi_1(G);\RR) \cong \Hom(\pi_1(G);\RR)$ and $E_2^{2,0} \cong H^2(BG;\RR)$, the derivation $d_2^{0,1} \colon E_2^{0,1} \to E_2^{2,0}$ defines a homomorphism
\[
  d_2^{0,1} \colon \Hom(\pi_1(G), \RR) \to H^2(BG;\RR).
\]

\begin{proposition}\label{prop:transgression_obstruction_homom}
  Let $(E_r^{p,q}, d_r^{p,q})$ be the Serre spectral sequence of $(\ref{seq:classifying_spaces_univ_cover_top})$ with coefficients in $\RR$.
  For a homomorphism $f \colon \pi_1(G) \to \RR$, the equality
  \[
    -d_2^{0,1}(f) = f_* \mathfrak{o} \in H^2(BG;\RR)
  \]
  holds.
\end{proposition}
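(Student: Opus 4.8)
The plan is to deduce the identity from the naturality of the Serre spectral sequence under a change of coefficient groups. A homomorphism $f\colon\pi_1(G)\to\RR$ of abelian groups induces a morphism of the (constant) coefficient systems on the fibration $(\ref{seq:classifying_spaces_univ_cover_top})$, hence a morphism of Serre spectral sequences that commutes with all differentials and that on the $E_2$-page is the change-of-coefficients homomorphism $H^p(BG;H^q(B\pi_1(G);\pi_1(G)))\to H^p(BG;H^q(B\pi_1(G);\RR))$ (see, e.g., \cite{W}). I would first record this morphism and then pin down what it does on the two terms relevant to $d_2^{0,1}$.

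Since $BG$ is one-connected, $E_2^{2,0}\cong H^2(BG;-)$ and the morphism of spectral sequences restricts there to the change-of-coefficients map $f_*\colon H^2(BG;\pi_1(G))\to H^2(BG;\RR)$ appearing in the statement. On the other hand $E_2^{0,1}\cong H^1(B\pi_1(G);-)$, and since $B\pi_1(G)$ is a $K(\pi_1(G),1)$ with $\pi_1(G)$ abelian, there are isomorphisms $H^1(B\pi_1(G);A)\cong\Hom(\pi_1(G),A)$ that are natural in the abelian coefficient group $A$; under these, $f_*$ becomes post-composition with $f$. In particular $f_*(\id_{\pi_1(G)})=f\circ\id_{\pi_1(G)}=f$.

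The key step is then a diagram chase. Naturality of the morphism of spectral sequences gives the commutative square
\[
  \begin{array}{ccc}
    \Hom(\pi_1(G),\pi_1(G)) & \xrightarrow{\ d_2^{0,1}\ } & H^2(BG;\pi_1(G))\\
    \big\downarrow\ {\scriptstyle f_*} & & \big\downarrow\ {\scriptstyle f_*}\\
    \Hom(\pi_1(G),\RR) & \xrightarrow{\ d_2^{0,1}\ } & H^2(BG;\RR).
  \end{array}
\]
Chasing $\id_{\pi_1(G)}$ around the square: going right then down yields $f_*\big(d_2^{0,1}(\id_{\pi_1(G)})\big)=f_*(-\mathfrak{o})=-f_*\mathfrak{o}$ by Definition \ref{def:obs_class_via_s.s.}, while going down then right yields $d_2^{0,1}(f_*(\id_{\pi_1(G)}))=d_2^{0,1}(f)$. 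Equating the two gives $d_2^{0,1}(f)=-f_*\mathfrak{o}$, that is, $-d_2^{0,1}(f)=f_*\mathfrak{o}$.

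The only genuinely delicate points are bookkeeping: verifying that the identification $E_2^{0,1}\cong\Hom(\pi_1(G),-)$ is natural in the coefficient group (so that $f_*$ really sends $\id_{\pi_1(G)}$ to $f$), and carrying the sign in Definition \ref{def:obs_class_via_s.s.} through consistently. The naturality of the Serre spectral sequence under change of coefficients — which I expect to be the conceptual crux — is classical and can simply be cited, so no new computation is needed.
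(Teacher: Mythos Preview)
Your proposal is correct and follows essentially the same approach as the paper: both use the naturality of the Serre spectral sequence under change of coefficients to obtain the commutative square relating the $\pi_1(G)$- and $\RR$-coefficient spectral sequences, and then chase $\id_{\pi_1(G)}$ through it using $f_*(\id_{\pi_1(G)})=f$ and Definition~\ref{def:obs_class_via_s.s.}. Your write-up is slightly more explicit about why the identification $E_2^{0,1}\cong\Hom(\pi_1(G),-)$ is natural, but the argument is the same.
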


\begin{proof}
  Let $(E_r^{'p,q}, d_r^{'p,q})$ be the Serre spectral sequence of (\ref{seq:classifying_spaces_univ_cover_top}) with coefficients in $\pi_1(G)$.
  Then the equality $-d_2^{'0,1}(\id_{\pi_1(G)}) = \mathfrak{o}$ holds.
  Since the derivation maps in the Serre spectral sequence is compatible with the change of coefficients homomorphisms, we have the following commutative diagram
  \[
  \xymatrix{
  \Hom(\pi_1(G), \pi_1(G)) \cong E_2^{'0,1} \ar[r]^-{d_2^{'0,1}} \ar[d]^-{f_*} & E_2^{'2,0} \cong H^2(BG;\pi_1(G)) \ar[d]^-{f_*} \\
  \Hom(\pi_1(G), \RR) \cong E_2^{0,1} \ar[r]^-{d_2^{0,1}} & E_2^{2,0} \cong H^2(BG;\RR).
  }
  \]
  Since $f = f_*(\id_{\pi_1(G)})$, we obtain
  \[
    -d_2^{0,1}(f) = -d_2^{0,1}(f_*(\id_{\pi_1(G)})) = f_*(-d_2^{'0,1}(\id_{\pi_1(G)})) = f_* \mathfrak{o}
  \]
  and the proposition follows.
\end{proof}

\begin{remark}\label{remark:derivation_is_isom}
  Let $(E_r^{p,q}, d_r^{p,q})$ be the Serre spectral sequence of (\ref{seq:classifying_spaces_univ_cover_top}) with coefficients in $\RR$.
  Then, the map $d_2^{0,1}$ is an isomorphism.
  Indeed, the $E_2$-page of the spectral sequence induces an exact sequence
  \begin{align*}
    0 \to H^1(BG;\RR) \to H^1(&B\tG;\RR) \to H^1(B\pi_1(G);\RR)\\
    &\xrightarrow{d_2^{0,1}} H^2(BG;\RR) \to H^2(B\tG;\RR).
  \end{align*}
  Since $\tG$ is one-connected, the classifying space $B\tG$ is two-connected.
  Hence the cohomology groups $H^1(B\tG;\RR)$ and $H^2(B\tG;\RR)$ are trivial, and this implies that the derivation map $d_2^{0,1}$ is an isomorphism.
  In particular, the class $f_*\mathfrak{o} = -d_2^{0,1}(f)$ is non-zero if and only if the homomorphism $f$ is non-zero.
\end{remark}

\section{Construction of group cohomology classes}\label{cohom class sec}

Let us consider an exact sequence
\begin{align}\label{seq:group_extension}
  1 \to K \xrightarrow{i} \GG \xrightarrow{\pi} G \to 1
\end{align}
of discrete groups.

\begin{definition}
  A subspace $\mathcal{C}(\GG)$ of $C^1(\GG;A)$ is defined by
  \begin{align}\label{CGamma}
    \mathcal{C}(\GG) = \{ F \in & C_{\grp}^1(\GG;A) \nonumber \\
    & \mid F(k\gg) = F(\gg k) = F(\gg) + F(k) \text{ for any } \gg \in \GG, k \in K \}.
  \end{align}
\end{definition}

We define a map $\mathfrak{D} \colon \mathcal{C}(\GG) \to C_{\grp}^2(G;A)$ by setting
\[
  \mathfrak{D}(F)(g_1, g_2) = F(\gg_2) - F(\gg_1 \gg_2) + F(\gg_1),
\]
where $\gg_j$ is an element of $\GG$ satisfying $\pi(\gg_j) = g_j$.

\begin{lemma}\label{lemma:I_welldef}
  The map $\mathfrak{D} \colon \mathcal{C}(\GG) \to C_{\grp}^2(G;A)$ is well-defined.
\end{lemma}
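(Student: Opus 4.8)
The plan is to verify the one non-trivial point: that the quantity $F(\gg_2)-F(\gg_1\gg_2)+F(\gg_1)$ is independent of the choice of lifts $\gg_j\in\GG$ of $g_j\in G$ (such lifts exist because $\pi$ is surjective). Granting this, $\mathfrak{D}(F)$ is an honest function $G^2\to A$, hence an element of $C_{\grp}^2(G;A)$, and $F\mapsto\mathfrak{D}(F)$ is additive by inspection of the formula.

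First I would extract two consequences of the condition defining $\mathcal{C}(\GG)$. Applying the identity $F(k\gg)=F(\gg k)=F(\gg)+F(k)$ with $\gg=k'\in K$ shows that the restriction $F|_K\colon K\to A$ is a homomorphism. Next, for $k\in K$ and $\gg\in\GG$ the element $\gg k\gg^{-1}$ again lies in $K$ (the kernel $K$ is normal in $\GG$), and applying the defining condition to both sides of $(\gg k\gg^{-1})\gg=\gg k$ gives $F(\gg)+F(\gg k\gg^{-1})=F(\gg k)=F(\gg)+F(k)$, whence $F(\gg k\gg^{-1})=F(k)$; that is, $F|_K$ is invariant under conjugation by $\GG$.

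Then I would run the change-of-lifts computation. Any other lift of $g_j$ has the form $\gg_j k_j$ with $k_j\in K$. Writing $\gg_1 k_1\gg_2 k_2=\gg_1\gg_2\cdot\big((\gg_2^{-1}k_1\gg_2)k_2\big)$ with $(\gg_2^{-1}k_1\gg_2)k_2\in K$, the defining condition together with the two facts above gives $F(\gg_j k_j)=F(\gg_j)+F(k_j)$ and $F(\gg_1 k_1\gg_2 k_2)=F(\gg_1\gg_2)+F(k_1)+F(k_2)$, so that
\[
  F(\gg_2 k_2)-F(\gg_1 k_1\gg_2 k_2)+F(\gg_1 k_1) = F(\gg_2)-F(\gg_1\gg_2)+F(\gg_1),
\]
which is exactly the value computed from the lifts $\gg_1,\gg_2$. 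This establishes the independence of the lifts.

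The computation itself is routine; the only step needing care — the "main obstacle", such as it is — is commuting the factor $k_1\in K$ past $\gg_2$ when combining the two altered lifts. This is exactly where the normality of $K$ and the $\GG$-conjugation invariance of $F|_K$ are used, and also where one needs the full strength of the defining condition of $\mathcal{C}(\GG)$, namely the equality $F(k\gg)=F(\gg k)$ rather than just $F(\gg k)=F(\gg)+F(k)$; without it the terms $F(k_1)$ and $F(k_2)$ would fail to cancel.
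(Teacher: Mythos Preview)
Your proof is correct. The approach is essentially the same as the paper's---verify that the expression is unchanged under altering the lifts---but the paper makes a slicker choice of parametrization: it writes the alternative lifts as $\gg_1' = k_1\gg_1$ (kernel element on the \emph{left}) and $\gg_2' = \gg_2 k_2$ (kernel element on the \emph{right}). Then $\gg_1'\gg_2' = k_1\gg_1\gg_2 k_2$ already has the $K$-factors on the outside, and the defining identity of $\mathcal{C}(\GG)$ applies directly without ever conjugating. This bypasses the preliminary work you do to establish that $F|_K$ is a homomorphism and $\GG$-conjugation-invariant (facts the paper proves separately later, just before Lemma~\ref{lemma:surj_C_H^1(K)}). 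Your route is perfectly valid and arguably more transparent about \emph{why} both halves of the condition $F(k\gg)=F(\gg k)=F(\gg)+F(k)$ are needed; the paper's route is simply shorter.
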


\begin{proof}
  Let $\gg_j'$ be another element of $\GG$ satisfying $\pi(\gg_j') = g_j$.
  Then there exist $k_1, k_2 \in K$ satisfying $\gg_1' = k_1 \gg_1$ and $\gg_2' = \gg_2 k_2$.
  By the definition of $\mathcal{C}(\GG)$, we have
  \begin{align*}
    &F(\gg_2') - F(\gg_1' \gg_2') + F(\gg_1')\\
    &= F(\gg_2 k_2) - F(k_1 \gg_1 \gg_2 k_2) + F(k_1 \gg_1)\\
    &= (F(\gg_2) + F(k_2)) - (F(k_1) + F(\gg_1 \gg_2) + F(k_2)) + (F(k_1) + F(\gg_1))\\
    &= F(\gg_2) - F(\gg_1 \gg_2) + F(\gg_1).
  \end{align*}
  This implies the well-definedness of the map $\mathfrak{D}$.
\end{proof}

\begin{lemma}\label{lemma:I_cocycle}
  For any $F \in \mathcal{C}(\GG)$, the cochain $\mathfrak{D}(F)$ is a cocycle.
\end{lemma}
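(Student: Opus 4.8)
The plan is to verify directly that $\delta(\mathfrak{D}(F)) = 0$ in $C_{\grp}^3(G;A)$, by observing that $\mathfrak{D}(F)$ is just the push-forward to $G$ of the coboundary $\delta F$ of $F$ regarded as a $1$-cochain on $\GG$, so that the cocycle condition for $\mathfrak{D}(F)$ reduces to the identity $\delta\circ\delta = 0$ on $\GG$. Concretely, I would first record the elementary reformulation of the definition: if $\gg_1,\gg_2\in\GG$ are lifts of $g_1,g_2\in G$ (that is, $\pi(\gg_j)=g_j$), then
\[
  \mathfrak{D}(F)(g_1,g_2) = F(\gg_1) + F(\gg_2) - F(\gg_1\gg_2) = (\delta F)(\gg_1,\gg_2),
\]
where $\delta F\in C_{\grp}^2(\GG;A)$ is the ordinary coboundary of $F\in C_{\grp}^1(\GG;A)$. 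Lemma \ref{lemma:I_welldef} guarantees that the left-hand side is independent of the chosen lifts, which is the only input that will make the following computation legitimate.

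Next I would fix arbitrary $g_1,g_2,g_3\in G$, choose lifts $\gg_1,\gg_2,\gg_3\in\GG$, and expand $\delta(\mathfrak{D}(F))(g_1,g_2,g_3)$ using the standard formula for the coboundary of a $2$-cochain with trivial coefficients,
\[
  \delta(\mathfrak{D}(F))(g_1,g_2,g_3) = \mathfrak{D}(F)(g_2,g_3) - \mathfrak{D}(F)(g_1g_2,g_3) + \mathfrak{D}(F)(g_1,g_2g_3) - \mathfrak{D}(F)(g_1,g_2).
\]
By Lemma \ref{lemma:I_welldef} I may evaluate each term with multiplicatively compatible lifts: $\gg_2\gg_3$ for $g_2g_3$, $\gg_1\gg_2$ for $g_1g_2$, and the product $\gg_1\gg_2\gg_3$ for $g_1g_2g_3$ inside the two middle terms. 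The four terms then become exactly
\[
  (\delta F)(\gg_2,\gg_3) - (\delta F)(\gg_1\gg_2,\gg_3) + (\delta F)(\gg_1,\gg_2\gg_3) - (\delta F)(\gg_1,\gg_2) = \big(\delta(\delta F)\big)(\gg_1,\gg_2,\gg_3) = 0.
\]
Alternatively, one expands the six occurring values $F(\gg_1),F(\gg_2),F(\gg_3),F(\gg_1\gg_2),F(\gg_2\gg_3),F(\gg_1\gg_2\gg_3)$ and checks that each appears once with a plus sign and once with a minus sign; either way the sum vanishes, so $\mathfrak{D}(F)$ is a cocycle.

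I do not anticipate any real obstacle here: the argument is a routine verification once the right viewpoint is taken. The single point that needs care is that the coboundary formula for $\mathfrak{D}(F)$ forces one to evaluate $\mathfrak{D}(F)$ at pairs such as $(g_1g_2,g_3)$, and to collapse the sum one must be allowed to compute these using the product lifts $\gg_1\gg_2$, etc.; this freedom is precisely what Lemma \ref{lemma:I_welldef} supplies. Note that the defining condition $F\in\mathcal{C}(\GG)$ is not used again in this proof beyond its role in Lemma \ref{lemma:I_welldef}.
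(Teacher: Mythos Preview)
Your argument is correct and is essentially the same as the paper's: the paper observes that $\pi^*\mathfrak{D}(F)=\pm\delta F$, hence $\pi^*(\delta\mathfrak{D}(F))=\pm\delta\delta F=0$, and then uses surjectivity of $\pi$ (equivalently, injectivity of $\pi^*$ on cochains) to conclude $\delta\mathfrak{D}(F)=0$. Your explicit choice of compatible lifts and appeal to Lemma~\ref{lemma:I_welldef} is exactly what underlies the paper's one-line use of $\pi^*$ and surjectivity, just unpacked.
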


\begin{proof}
  Since $\pi^* \mathfrak{D}(F) = -\delta F$ by the definition of $\mathfrak{D}(f)$, we have
  \[
    \pi^* (\delta \mathfrak{D}(F)) = -\delta \delta F = 0.
  \]
  By the surjectivity of $\pi \colon \GG \to G$, we have $\delta \mathfrak{D}(F) = 0$.
\end{proof}

\begin{definition}\label{def:map_frak_d}
  A homomorphism $\mathfrak{d} \colon \mathcal{C}(\GG) \to H_{\grp}^2(G;A)$ is defined by
  \[
    \mathfrak{d}(F) = [\mathfrak{D}(F)] \in H_{\grp}^2(G;A).
  \]
\end{definition}

For an element $F$ of $\mathcal{C}(\GG)$, the restriction $F|_{K} = i^*F$ to $K$ is a homomorphism. %by the definition of $\mathcal{C}(\GG)$.
Moreover, $F|_{K}$ is $\GG$-invariant since
\[
  F(\gg^{-1}k\gg) = F(\gg \cdot \gg^{-1} k \gg) - F(\gg) = F(k \gg) - F(\gg) = F(k).
\]
Let $H_{\grp}^1(K;A)^\GG$ denote the space of $\GG$-invariant homomorphisms from $K$ to $A$.
Then the restriction to $K$ defines a homomorphism $i^* \colon \mathcal{C}(\GG) \to H_{\grp}^1(K;A)^\GG$.

\begin{lemma}\label{lemma:surj_C_H^1(K)}
  The homomorphism $i^* \colon \mathcal{C}(\GG) \to H_{\grp}^1(K;A)^\GG$ is surjective.
\end{lemma}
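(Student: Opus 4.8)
The plan is to start from an arbitrary $\GG$-invariant homomorphism $f \colon K \to A$ and to extend it to a $1$-cochain $F \in C^1_{\grp}(\GG; A)$ lying in $\mathcal{C}(\GG)$ with $i^*F = f$. First I would choose a set-theoretic section $s \colon G \to \GG$ of $\pi$, normalized so that $s(1_G) = 1_\GG$; this gives a bijection $\GG \cong K \times G$ via $\gg \mapsto (\gg \cdot s(\pi(\gg))^{-1}, \pi(\gg))$. Write each $\gg \in \GG$ uniquely as $\gg = k_\gg \, s(g)$ with $g = \pi(\gg)$ and $k_\gg \in K$. The natural guess is to define $F(\gg) = f(k_\gg)$, i.e., $F$ records the "$K$-part" of $\gg$ relative to the section and applies $f$ to it. By construction $F$ restricts to $f$ on $K$ (since for $k \in K$ we have $g = 1_G$, $s(g) = 1_\GG$, hence $k_\gg = k$), so the only thing to verify is that this $F$ actually lies in $\mathcal{C}(\GG)$, namely that $F(k\gg) = F(\gg k) = F(\gg) + F(k)$ for all $\gg \in \GG$, $k \in K$.

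The core of the argument is therefore the defining relation of $\mathcal{C}(\GG)$, and this is where the $\GG$-invariance of $f$ will be used. For the left-multiplication case, $k\gg = k k_\gg s(g)$, so its $K$-part is $k k_\gg$ and $F(k\gg) = f(kk_\gg) = f(k) + f(k_\gg) = F(k) + F(\gg)$ because $f$ is a homomorphism; no invariance is needed here. For the right-multiplication case, $\gg k = k_\gg s(g) k = k_\gg \bigl( s(g) k s(g)^{-1}\bigr) s(g)$, and since $K$ is normal in $\GG$ the element $s(g) k s(g)^{-1}$ lies in $K$; thus the $K$-part of $\gg k$ is $k_\gg \cdot s(g) k s(g)^{-1}$, giving
\[
  F(\gg k) = f(k_\gg) + f\bigl(s(g) k s(g)^{-1}\bigr) = f(k_\gg) + f(k) = F(\gg) + F(k),
\]
where the middle equality is exactly the $\GG$-invariance of $f$ (applied to the conjugating element $s(g) \in \GG$). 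This establishes $F \in \mathcal{C}(\GG)$, and since $i^*F = f$, the homomorphism $i^*$ is surjective.

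The main obstacle — though it is a mild one — is bookkeeping with the normal form $\gg = k_\gg s(g)$ and making sure the right-multiplication computation genuinely invokes $\GG$-invariance rather than merely $K$-conjugation-invariance of $f$: the conjugating element is $s(g)$, which lies in $\GG$ but generally not in $K$, so the hypothesis $f \in H^1_{\grp}(K;A)^\GG$ is essential and cannot be weakened to invariance under inner automorphisms of $K$ alone. I would also remark that no cocycle condition or coherence of the section is required, since $\mathcal{C}(\GG)$ imposes no constraint on how $F$ behaves on products $s(g_1)s(g_2)$; that freedom is precisely what is measured later by the map $\mathfrak{D}$.
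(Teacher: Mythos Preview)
Your proof is correct and follows essentially the same approach as the paper: both choose a normalized set-theoretic section $s\colon G\to\GG$, define the extension by $F(\gg)=f(\gg\cdot s(\pi(\gg))^{-1})$ (your $f(k_\gg)$), and verify the two $\mathcal{C}(\GG)$-conditions directly, using the $\GG$-invariance of $f$ precisely in the right-multiplication step. Your write-up via the normal form $\gg=k_\gg\,s(g)$ is in fact a bit cleaner than the paper's, but the construction and the logic are identical.
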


\begin{proof}
  Let $s \colon G \to \GG$ be a section of $p \colon \GG \to G$ satisfying $s(1_G) = 1_{\GG}$, where $1_G \in G$ and $1_{\GG} \in \GG$ are the unit elements of $G$ and $\GG$, respectively.
  Since $\gg \cdot s(\pi(\gg))^{-1}$ is in $\ke(\pi \colon \GG \to G)$, we regard $\gg \cdot s(\pi(\gg))^{-1}$ as an element of $K$ under the injection $i \colon K \to \GG$.
  For an element $f$ of $H_{\grp}^1(K;A)^\GG$, define $f_s \colon \GG \to A$ by
  \[
    f_s(\gg) = f(\gg \cdot s(\pi(\gg))^{-1}).
  \]
  Note that the restriction of $f_s$ to $K$ is equal to $f$.
  Moreover, the equalities
  \begin{align*}
    f_s(k\gg) &= f(k\gg \cdot s(\pi(k\gg))^{-1}) = f(k\gg \cdot s(\pi(\gg))^{-1})\\
    &= f_s(k) + f_s(\gg \cdot s(\pi(k\gg))^{-1}) = f_s(k) + f_s(\gg)
  \end{align*}
  and
  \begin{align*}
    f_s(\gg k) &= f(\gg k \cdot s(\pi(\gg k))^{-1}) = f(\gg k \cdot s(\pi(\gg))^{-1})\\
    &= f(\gg k \gg^{-1}) + f_s(\gg \cdot s(\pi(k\gg))^{-1})\\
    &= f(k) + f_s(\gg \cdot s(\pi(k\gg))^{-1})\\
    &= f_s(k) + f_s(\gg \cdot s(\pi(k\gg))^{-1}) = f_s(k) + f_s(\gg)
  \end{align*}
  hold, where we use the $\GG$-invariance of $f$ in the second equalities.
  Hence $f_s$ is an element of $\mathcal{C}(\GG)$ and the surjectivity follows.
\end{proof}

For sequence (\ref{seq:group_extension}), there is an exact sequence
\begin{align}\label{seq:inf-res_ex_seq}
  0 \to H_{\grp}^1(G;\RR) \xrightarrow{\pi^*} H_{\grp}^1(&\GG;\RR) \xrightarrow{i^*} H_{\grp}^1(K;\RR)^{\GG} \nonumber\\
  &\xrightarrow{\tau} H_{\grp}^2(G;\RR) \xrightarrow{\pi^*} H_{\grp}^2(\GG;\RR)
\end{align}
called the \textit{five-term exact sequence}.
%Here $H_{\grp}^1(K;\RR)^G$ is the set of all $G$-invariant homomorphisms, that is, a homomorphism $f \in H_{\grp}^1(K;\RR)$ is in $H_{\grp}^1(K;\RR)^G$ if and only if $f$ satisfies $f(\gg^{-1} k \gg) = f(k)$ for any $k \in K$ and $\gg \in \GG$.
This five-term exact sequence is obtained by the Hochschild-Serre spectral sequence $(E_r^{p,q}, d_r^{p,q})$ of (\ref{seq:group_extension}), and the map $\tau$ is the derivation $d_2^{0,1} \colon E_2^{0,1} = H_{\grp}^1(K;\RR)^{\GG} \to E_2^{2,0} = H_{\grp}^2(G;\RR)$.

\begin{comment}
\begin{theorem}[{\cite[(2.4.3) Theorem]{MR2392026}}]\label{thm:transgression_derivation}
  The derivation
  \[
    d_2^{0,1} \colon H^1(K;M)^G \to H^2(G;M)
  \]
  in the Hochschild-Serre spectral sequence is equal to the transgression map $\tau$ in Proposition \ref{prop:transgression_cocycle_neukirch}.
\end{theorem}

The homomorphism $\mathfrak{d}$ defined in Definition \ref{def:map_frak_d} is related to the map $\tau$ in (\ref{seq:inf-res_ex_seq}) as the following commutative diagram (we will show this in Lemma \ref{lem:commut_diag_basics})
\[
\xymatrix{
\mathcal{C}(\GG) \ar[rd]^-{\mathfrak{d}} \ar[d]^-{i^*} & \\
H_{\grp}^1(K;A)^\GG \ar[r]_-{\tau} & H_{\grp}^2(G;A).
}
\]
\end{comment}

%The homomorphisms $\mathfrak{d}$, $i^*$, and $\tau$ are related as follows.

\begin{lemma}\label{lem:commut_diag_basics}
  The diagram
  \[
  \xymatrix{
  \mathcal{C}(\GG) \ar[rd]^-{\mathfrak{d}} \ar[d]^-{i^*} & \\
  H_{\grp}^1(K;A)^{\GG} \ar[r]_-{\tau} & H_{\grp}^2(G;A).
  }
  \]
  commutes.
\end{lemma}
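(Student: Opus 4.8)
The plan is to pin down an explicit cochain representative of the transgression $\tau = d_2^{0,1}$ for the extension (\ref{seq:group_extension}) and to recognize that the assignment $F \mapsto \mathfrak{D}(F)$ produces exactly such a representative. Recall the classical cochain description of the transgression (see, e.g., \cite{Bro}): for a $\GG$-invariant homomorphism $f \in H_{\grp}^1(K;A)^{\GG}$, one picks a $1$-cochain $\widetilde f \in C_{\grp}^1(\GG;A)$ with $\widetilde f|_K = f$, chosen so that $\delta\widetilde f$ is inflated from $G$; there is then a unique $\beta \in C_{\grp}^2(G;A)$ with $\pi^*\beta = -\delta\widetilde f$, this $\beta$ is automatically a cocycle, and $\tau(f) = [\beta] \in H_{\grp}^2(G;A)$. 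The defining relations of $\mathcal{C}(\GG)$ in (\ref{CGamma}) are precisely what forces $\delta\widetilde f$ to be inflated from $G$: for $F \in \mathcal{C}(\GG)$ this is the content of the computation in the proof of Lemma \ref{lemma:I_welldef}, and the resulting identity is $\pi^*\mathfrak{D}(F) = -\delta F$, as recorded in the proof of Lemma \ref{lemma:I_cocycle}.

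First I would treat $F$ of the special form $F = f_s$ produced in the proof of Lemma \ref{lemma:surj_C_H^1(K)}, where $f = i^*F \in H_{\grp}^1(K;A)^{\GG}$ and $f_s$ is built from a set-theoretic section $s \colon G \to \GG$ with $s(1_G) = 1_{\GG}$. Since $f_s \in \mathcal{C}(\GG)$, $f_s|_K = f$, and $\pi^*\mathfrak{D}(f_s) = -\delta f_s$, the cochain $\mathfrak{D}(f_s)$ is exactly the cocycle $\beta$ representing $\tau(f)$ in the description above. Hence $\mathfrak{d}(f_s) = [\mathfrak{D}(f_s)] = \tau(f) = \tau(i^*f_s)$, which is the claimed commutativity for these $F$.

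Next I would reduce the general case to this one, using that $\mathfrak{D}$, and therefore $\mathfrak{d}$, is additive in $F$. For an arbitrary $F \in \mathcal{C}(\GG)$, put $f = i^*F$ and write $F = f_s + (F - f_s)$. Then $i^*(F - f_s) = 0$, so setting $F_0 = F - f_s \in \mathcal{C}(\GG)$ the relations in (\ref{CGamma}) give $F_0(k\gg) = F_0(\gg k) = F_0(\gg) + F_0(k) = F_0(\gg)$ for all $\gg \in \GG$ and $k \in K$, because $F_0(k) = i^*F_0(k) = 0$. Thus $F_0$ is constant on the left $K$-cosets, hence $F_0 = \pi^*c$ for the $1$-cochain $c \in C_{\grp}^1(G;A)$ given by $c(\pi(\gg)) = F_0(\gg)$. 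Since $\pi^*\mathfrak{D}(\pi^*c) = -\delta\pi^*c = -\pi^*\delta c$ and $\pi^*$ is injective on cochains, $\mathfrak{D}(\pi^*c) = -\delta c$ is a coboundary, so $\mathfrak{d}(\pi^*c) = 0$. Therefore
\[
  \mathfrak{d}(F) = \mathfrak{d}(f_s) + \mathfrak{d}(\pi^*c) = \mathfrak{d}(f_s) = \tau(i^*f_s) = \tau(i^*F),
\]
which is the commutativity of the diagram.

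The only genuinely delicate point is the first paragraph: one must invoke (or reprove) the explicit cochain formula for the transgression map and carefully reconcile its sign conventions with those used here for $\delta$, $\pi^*$ and $\mathfrak{D}$. Once this bookkeeping is in place, the remaining steps are formal and are essentially already contained in Lemmas \ref{lemma:I_welldef}--\ref{lemma:surj_C_H^1(K)}.
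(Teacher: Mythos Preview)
Your proof is correct and follows essentially the same route as the paper: both invoke the cochain-level description of the transgression $\tau$ (which the paper quotes as Proposition~\ref{prop:transgression_cocycle_neukirch} from \cite{MR2392026}), recognizing that any $F \in \mathcal{C}(\GG)$ is a valid lift of $i^*F$ in that description and that $\mathfrak{D}(F)$ is the resulting $2$-cocycle on $G$. Your extra step---reducing a general $F$ to the explicit lift $f_s$ and showing $\mathfrak{d}(F-f_s)=0$---makes the independence of the choice of lift explicit, whereas the paper simply cites the proposition and leaves this implicit.
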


\begin{proof}
  By Definition \ref{def:map_frak_d} and Proposition \ref{prop:transgression_cocycle_neukirch} below, the commutativity follows.
\end{proof}

\begin{proposition}[{\cite[(1.6.6) Proposition]{MR2392026}}]\label{prop:transgression_cocycle_neukirch}
  For any $\GG$-invariant homomorphism $f \in H_{\grp}^1(K;A)^{\GG}$, there exists a one-cochain $F \colon \GG \to A$ such that $i^* F = f$ and that $\delta F(\gg_1, \gg_2)$ depends only on $\pi(\gg_1)$ and $\pi(\gg_2)$, that is, there exists a cocycle $c \in C_{\grp}^2(G;A)$ satisfying $c(\pi(\gg_1), \pi(\gg_2)) = \delta F(\gg_1, \gg_2)$ for any $\gg_1, \gg_2 \in \GG$.
  Moreover, the class $\tau(f)$ is equal to $[c] \in H_{\grp}^2(G;A)$.
\end{proposition}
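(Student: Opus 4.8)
The plan is to build the cochain $F$ by hand from a set-theoretic section of $\pi$, to note that the first two assertions are already essentially contained in Lemma~\ref{lemma:surj_C_H^1(K)} and Lemma~\ref{lemma:I_cocycle}, and then to identify the resulting cohomology class with the transgression $\tau=d_2^{0,1}$ by unwinding the $d_2$-differential of the Hochschild--Serre spectral sequence of (\ref{seq:group_extension}). Concretely, I would fix a section $s\colon G\to\GG$ of $\pi$ with $s(1_G)=1_\GG$ and set
\[
  F(\gg)=f\bigl(\gg\cdot s(\pi(\gg))^{-1}\bigr),
\]
which is precisely the cochain $f_s$ appearing in the proof of Lemma~\ref{lemma:surj_C_H^1(K)}. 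That proof shows $F\in\mathcal{C}(\GG)$, hence $i^*F=f$; and, $F$ being in $\mathcal{C}(\GG)$, the identity $\pi^*\mathfrak{D}(F)=\pm\,\delta F$ from the proof of Lemma~\ref{lemma:I_cocycle} shows that $\delta F(\gg_1,\gg_2)$ depends only on $\pi(\gg_1)$ and $\pi(\gg_2)$. The induced $2$-cochain $c\in C_{\grp}^2(G;A)$ then equals $\pm\,\mathfrak{D}(F)$ and is a cocycle by Lemma~\ref{lemma:I_cocycle}.

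It remains to prove $[c]=\tau(f)$. First, $[c]$ depends only on $f$: if $F'$ is any other $1$-cochain with $i^*F'=f$ and with $\delta F'$ inflated from $G$, then a short computation with the inflation condition yields $F-F'=\pi^*h$ for a unique $h\colon G\to A$ with $h(1_G)=0$, so the associated cocycles differ by the coboundary $\pm\,\delta h$. (Equivalently, $[c]$ is the class of the central extension $1\to A\to(\GG\times A)/\{(i(k),-f(k)):k\in K\}\to G\to 1$ --- central precisely because $f$ is $\GG$-invariant --- whose cocycle relative to the section $g\mapsto[(s(g),0)]$ is $(g_1,g_2)\mapsto f\bigl(s(g_1)s(g_2)s(g_1g_2)^{-1}\bigr)$.) Granting this, the identification with $\tau(f)=d_2^{0,1}(f)$ comes from the standard cochain model of the Hochschild--Serre spectral sequence: for its decreasing filtration on $C_{\grp}^\bullet(\GG;A)$, the cochain $F$ has filtration degree $0$ and restricts on $K$ to the homomorphism $f$, so it represents the class $f\in E_2^{0,1}\cong H_{\grp}^1(K;A)^{\GG}$; and the fact that $\delta F$ is inflated from $G$ says exactly that $\delta F$ already lies in filtration degree $2$, hence is the representative of $d_2^{0,1}(f)$ delivered by the zig-zag that defines the transgression. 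Therefore $\tau(f)=[c]$.

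The main difficulty is bookkeeping rather than conceptual: one must pin down the precise filtration on $C_{\grp}^\bullet(\GG;A)$ used to build the spectral sequence, the resulting identification $E_2^{0,1}\cong H_{\grp}^1(K;A)^{\GG}$, and the signs in $\delta$, $\mathfrak{D}$ and in the convention $\tau=d_2^{0,1}$, so that ``$\delta F$ lies in filtration degree $2$'' becomes literally synonymous with ``$\delta F$ is pulled back from $G$''. The most economical route, short of reconstructing the whole spectral sequence, is to invoke the explicit model and transgression formula of \cite[(1.6.5)--(1.6.6)]{MR2392026} (or the treatment of transgression in \cite{Bro}), after which the argument above reduces to this single routine compatibility check.
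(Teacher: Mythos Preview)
The paper does not supply its own proof of this proposition: it is quoted verbatim from \cite[(1.6.6)~Proposition]{MR2392026} and used as a black box (its sole role is to justify the commutativity in Lemma~\ref{lem:commut_diag_basics}). So there is nothing to compare your argument against in the paper itself.

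That said, your sketch is essentially the standard proof one would find in the cited reference, and it is correct. Using the explicit cochain $F=f_s\in\mathcal{C}(\GG)$ from Lemma~\ref{lemma:surj_C_H^1(K)} immediately gives $i^*F=f$ and, by the very definition of $\mathfrak{D}$ and Lemma~\ref{lemma:I_welldef}, the coboundary $\delta F$ is inflated from $G$ (in fact $\pi^*\mathfrak{D}(F)=\delta F$ with the paper's sign convention for $\delta$ on $1$-cochains). Your well-definedness argument for $[c]$ is fine once you note that any two such $F,F'$ with $i^*F=i^*F'=f$ and $\delta F,\delta F'$ inflated from $G$ can be normalized so that $F-F'$ is constant on $K$-cosets, hence $F-F'=\pi^*h$; this uses the same computation as in the proof of Lemma~\ref{lemma:surj_C_H^1(K)}. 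The identification $[c]=d_2^{0,1}(f)$ via the filtration on $C_{\grp}^\bullet(\GG;A)$ is exactly the zig-zag description of the transgression; the only real content is the observation that ``$\delta F$ is inflated from $G$'' is the cochain-level meaning of ``$\delta F$ lies in filtration degree $2$''. Your closing remark that the cleanest route is simply to invoke \cite[(1.6.5)--(1.6.6)]{MR2392026} is precisely what the paper does.
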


\begin{comment}
By the definition of $\mathcal{C}(\GG)$, the first cohomology $H_{\grp}^1(\GG;A)$ and the pullback $\pi^*C^1(G;A)$ are included in $\mathcal{C}(\GG)$.

\begin{lemma}\label{lemma:iota_homom_zero}
  The kernel of the map $\mathfrak{d} \colon \mathcal{C}(\GG) \to H_{\grp}^2(G;A)$ is equal to $H_{\grp}^1(\GG;A) + \pi^*C_{\grp}^1(G;A)$.
\end{lemma}

\begin{proof}
  For  $f \in H_{\grp}^1(\GG;A)$ and $\phi \in C_{\grp}^1(G;A)$, the cocycle $\mathfrak{D}(f + \pi^* \phi)$ is a coboundary. Indeed, for any $g_j \in G$ and $\gg_j \in \GG$ satisfying $p(\gg_j) = g_j$, the equality
  \begin{align*}
    &\mathfrak{D}(f + \pi^* \phi)(g_1, g_2)\\
    &= f(\gg_2) - f(\gg_1 \gg_2) + f(\gg_1) + \pi^*\phi(\gg_2) - \pi^*\phi(\gg_1 \gg_2) + \pi^*\phi(\gg_1)\\
    &= \delta \phi(g_1, g_2)
  \end{align*}
  hold.
  This implies $H_{\grp}^1(\GG;A) + \pi^*C_{\grp}^1(G;A) \subset \ke(\mathfrak{d})$.
  Take an element $F$ of $\ke(\mathfrak{d})$.
  Since $[\mathfrak{D}(F)] = \mathfrak{d}(F) = 0 \in H_{\grp}^2(G;A)$, there exists a cochain $\phi \in C_{\grp}^1(G;A)$ satisfying $\delta \phi = \mathfrak{D}(F)$.
  Then we have
  \[
    0 = \pi^* (\delta \phi - \mathfrak{D}(F)) = \delta \pi^*\phi - \delta F = \delta (\pi^*\phi - F).
  \]
  Thus $\pi^*\phi - F$ is in $H_{\grp}^1(\GG;A)$ and this implies that $\ke(\mathfrak{d}) \subset H_{\grp}^1(\GG;A) + \pi^*C_{\grp}^1(G;A)$.
\end{proof}
\end{comment}

\section{A diagram via bounded cohomology and quasi-morphism}\label{diagram sec}
From this section, we mainly consider cohomology with coefficients in $\RR$.
In this section, we refine the commutative diagram in view of bounded cohomology and homogeneous quasi-morphism.
Recall that a cohomology class $\alpha \in H_{\grp}^2(G;\RR)$ is called \textit{bounded} if $\alpha$ is in the image of the comparison map $c_G \colon H_b^2(G;\RR) \to H_{\grp}^2(G;\RR)$.

\begin{proposition}\label{prop:compatibility_transgression_qm_construction}
  There is a commutative diagram
  \[
  \xymatrix{
  \mathcal{C}(\GG) \cap Q(\GG) \ar[r]^-{\mathfrak{d}_b} \ar[rd]^-{\mathfrak{d}} \ar[d]^-{i^*} & H_b^2(G;\RR) \ar[d]^-{c_G}\\
  H_{\grp}^1(K;\RR)^G \ar[r]_-{\tau} & H_{\grp}^2(G;\RR).
  }
  \]
\end{proposition}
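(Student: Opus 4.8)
The plan is to first construct the horizontal map $\mathfrak{d}_b$ on $\mathcal{C}(\GG) \cap Q(\GG)$, and then check that the two triangles commute; given the bookkeeping already done in Section \ref{cohom class sec}, the second part will be essentially formal.

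First I would show that $\mathfrak{D}$ sends $\mathcal{C}(\GG) \cap Q(\GG)$ into $C_b^2(G;\RR)$. For $F \in \mathcal{C}(\GG) \cap Q(\GG)$, Lemmas \ref{lemma:I_welldef} and \ref{lemma:I_cocycle} already give that $\mathfrak{D}(F)$ is a well-defined cocycle on $G$ (independent of the chosen lifts); what remains is boundedness. Here I would use the identity $\pi^* \mathfrak{D}(F) = -\delta F$ from the proof of Lemma \ref{lemma:I_cocycle}: since $F$ is a quasi-morphism, $\|\delta F\|_\infty \le D(F) < \infty$, and since $\pi$ is surjective every pair $(g_1,g_2) \in G^2$ lifts to some $(\gg_1,\gg_2) \in \GG^2$, whence $|\mathfrak{D}(F)(g_1,g_2)| = |\delta F(\gg_1,\gg_2)| \le D(F)$. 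This lets me define $\mathfrak{d}_b(F) := [\mathfrak{D}(F)] \in H_b^2(G;\RR)$; it is a homomorphism because $\mathcal{C}(\GG) \cap Q(\GG)$ is a linear subspace of $C_{\grp}^1(\GG;\RR)$ and $F \mapsto \mathfrak{D}(F)$ is linear.

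Next I would verify commutativity. The lower-left triangle $\tau \circ i^* = \mathfrak{d}$ is just the restriction of Lemma \ref{lem:commut_diag_basics} to the subspace $\mathcal{C}(\GG) \cap Q(\GG)$; I would only remark that for $F \in \mathcal{C}(\GG)$ the restriction $i^*F = F|_K$ is a $\GG$-invariant homomorphism (as shown just before Lemma \ref{lemma:surj_C_H^1(K)}), and that $\GG$-invariance and $G$-invariance coincide on $\Hom(K,\RR)$ because every homomorphism $K \to \RR$ factors through $K^{\mathrm{ab}}$, on which inner automorphisms of $K$ act trivially; hence $i^*$ indeed lands in $H_{\grp}^1(K;\RR)^G$. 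For the upper-right triangle $c_G \circ \mathfrak{d}_b = \mathfrak{d}$, I would use that $c_G$ is induced by the inclusion $C_b^\bullet(G;\RR) \hookrightarrow C_{\grp}^\bullet(G;\RR)$: the cocycle $\mathfrak{D}(F)$ represents $\mathfrak{d}_b(F)$ when viewed in $H_b^2(G;\RR)$ and represents $\mathfrak{d}(F)$ when viewed in $H_{\grp}^2(G;\RR)$ (Definition \ref{def:map_frak_d}), so $c_G(\mathfrak{d}_b(F)) = \mathfrak{d}(F)$, which completes the diagram.

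I do not expect a genuine obstacle: the entire content of the proposition is the observation that $\mathfrak{D}$ carries (homogeneous) quasi-morphisms on $\GG$ to bounded cocycles on $G$, and the only actual estimate — the boundedness in the first step — follows immediately from the defect inequality defining a quasi-morphism together with the surjectivity of $\pi$. Note that homogeneity of $F$ is not even used for the boundedness; it is needed only for $F$ to lie in $Q(\GG)$ and for the later applications of the diagram.
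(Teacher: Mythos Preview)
Your proposal is correct and follows essentially the same approach as the paper: the paper likewise observes that $\mathfrak{D}(F)$ is bounded because $F$ is a quasi-morphism (using the same formula $\mathfrak{D}(F)(g_1,g_2)=F(\gg_2)-F(\gg_1\gg_2)+F(\gg_1)$), defines $\mathfrak{d}_b$ accordingly, and then notes $\mathfrak{d}=c_G\circ\mathfrak{d}_b$ by the definition of the comparison map, relying on Lemma~\ref{lem:commut_diag_basics} for the other triangle. Your write-up is more explicit (in particular your remark reconciling the $G$- and $\GG$-invariance in the target of $i^*$ is a welcome clarification), but the argument is the same.
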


\begin{proof}
  Let $F$ be an element of $\mathcal{C}(\GG) \cap Q(\GG)$.
  Then, the cocycle $\mathfrak{D}(F)$ is bounded since $F$ is a quasi-morphism and $\mathfrak{D}(F)(g_1, g_2) = F(\gg_2) - F(\gg_1\gg_2) + F(\gg_1)$ for any $g_1, g_2 \in G$ and their lifts $\gg_1, \gg_2 \in \GG$.
  Hence the homomorphism $\mathfrak{D} \colon \mathcal{C}(\GG) \to C_{\grp}^2(G;\RR)$ induces a homomorphism
  \[
    \mathfrak{d}_b \colon \mathcal{C}(\GG) \cap Q(\GG) \to H_b^2(G;\RR).
  \]
  By the definition of the comparison map $c_G$, we have $\mathfrak{d} = c_G \circ \mathfrak{d}_b$.
  %: \mathcal{C}(\GG) \cap Q(\GG) \to H_{\grp}^2(G;\RR)$, where $c_G$ is the comparison map.
\end{proof}
\begin{comment}
Then, for any $g_1, g_2 \in G$ and their lifts $\gg_1, \gg_2 \in \GG$, we have
\[
  \mathfrak{D}(F)(g_1, g_2) = F(\gg_2) - F(\gg_1\gg_2) + F(\gg_1),
\]
and it is bounded since $F$ is a quasi-morphism.
\end{comment}

\begin{remark}\label{remark:central_ext_qm_bdd_comm_diag}
  For a central extension
  \[
    0 \to A \xrightarrow{i} \GG \xrightarrow{\pi} G \to 1,
  \]
  the space $Q(\GG)$ is contained in $\mathcal{C}(\GG)$.
  Indeed, by the definition of central extension, we have $a \gg = \gg a$ for any $a \in A$ and $\gg \in \GG$.
  Hence, by (\ref{abel tashizan}), any homogeneous quasi-morphism $\mu \in Q(\GG)$ satisfies
  \[
    \mu(a \gg) = \mu(\gg a) = \mu(a) + \mu(\gg).
  \]
  This implies that $Q(\GG) \subset \mathcal{C}(\GG)$.
  Moreover, any homomorphism $f \colon A \to \RR$ is $\GG$-invariant since
  $\gg^{-1} a \gg = a \gg^{-1} \gg = a$ for any $\gg \in \GG$ and any $a \in A$.
  Hence, together with Proposition \ref{prop:compatibility_transgression_qm_construction}, we obtain the following commutative diagram
  \[
  \xymatrix{
  Q(\GG) \ar[r]^-{\mathfrak{d}_b} \ar[rd]^-{\mathfrak{d}} \ar[d]^-{i^*} & H_b^2(G;\RR) \ar[d]^-{c_G}\\
  H_{\grp}^1(A;\RR) \ar[r]_-{\tau} & H_{\grp}^2(G;\RR)
  }
  \]
  for a central extension $0 \to A \to \GG \to G \to 1$.
\end{remark}

\begin{comment}
\begin{remark}
 % In this remark, we temporary use the symbol $Q'(\GG)$ to denote the set of all (not necessarily homogeneous) quasi-morphisms on $\GG$.
%  If we use $Q'(\GG)$ rather than $Q(\GG)$ in Proposition \ref{prop:compatibility_transgression_qm_construction}, the same statement also holds, where $Q'(\GG)$ is the set of all (not necessarily homogeneous) quasi-morphisms on $\GG$.
 If we replace $Q(\GG)$ by $Q'(\GG)$ in Proposition \ref{prop:compatibility_transgression_qm_construction}, the same statement also holds, where $Q'(\GG)$ is the set of all (not necessarily homogeneous) quasi-morphisms on $\GG$.
\end{remark}
\end{comment}

\begin{lemma}\label{lemma:qm_hom_on_K_CGG}
  Let $\mu$ be a homogeneous quasi-morphism on $\GG$ whose restriction to $K$ is a homomorphism.
  Then $\mu$ is contained in $\mathcal{C}(\GG)$.
\end{lemma}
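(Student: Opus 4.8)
The plan is to verify the defining relations of $\mathcal{C}(\GG)$ directly: that $\mu(k\gg) = \mu(\gg k) = \mu(\gg) + \mu(k)$ for every $\gg \in \GG$ and every $k \in K$. The three ingredients I would use are that $K = \ke(\pi)$ is a \emph{normal} subgroup of $\GG$, that homogeneous quasi-morphisms are conjugation-invariant (see (\ref{conj_inv})), and that $\mu$ has finite defect $D(\mu)$ and is homogeneous.

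First I would record the key algebraic identity. Since $K$ is normal, for each $n \geq 1$ we may rewrite
\[
  (\gg k)^n = \gg^n \cdot \big(\gg^{-(n-1)} k \gg^{n-1}\big) \cdots \big(\gg^{-1} k \gg\big) \cdot k = \gg^n k_n,
\]
where $k_n = (\gg^{-(n-1)} k \gg^{n-1}) \cdots (\gg^{-1} k \gg)\, k$ lies in $K$. Because $\mu|_K$ is assumed to be a homomorphism and each factor $\gg^{-j} k \gg^{j}$ is a conjugate of $k$, conjugation invariance gives the exact evaluation $\mu(k_n) = \sum_{j=0}^{n-1} \mu(\gg^{-j} k \gg^{j}) = n\mu(k)$. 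Now combining this with homogeneity and the defect bound: from $\mu((\gg k)^n) = n\mu(\gg k)$, from $|\mu(\gg^n k_n) - \mu(\gg^n) - \mu(k_n)| \leq D(\mu)$, and from $\mu(\gg^n) = n\mu(\gg)$, $\mu(k_n) = n\mu(k)$, we obtain $|n\mu(\gg k) - n\mu(\gg) - n\mu(k)| \leq D(\mu)$; dividing by $n$ and letting $n \to \infty$ yields $\mu(\gg k) = \mu(\gg) + \mu(k)$. The relation $\mu(k\gg) = \mu(k) + \mu(\gg)$ then follows from the symmetric decomposition $(k\gg)^n = k\,(\gg k \gg^{-1}) \cdots (\gg^{n-1} k \gg^{-(n-1)})\,\gg^n = k_n' \gg^n$ with $k_n' \in K$ and $\mu(k_n') = n\mu(k)$, by the same limiting argument. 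Together these give $\mu \in \mathcal{C}(\GG)$.

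I do not expect a serious obstacle here; the only point requiring care is the bookkeeping in the product decompositions of $(\gg k)^n$ and $(k\gg)^n$ and the verification that the "correction terms" $k_n, k_n'$ genuinely lie in $K$. The argument is clean precisely because $\mu|_K$ is an \emph{honest} homomorphism, so the telescoping sums $\mu(k_n) = n\mu(k)$ are exact equalities rather than approximate ones, and only $\mu$ applied across the product $\gg^n k_n$ (respectively $k_n' \gg^n$) contributes a bounded error, which is then killed in the limit.
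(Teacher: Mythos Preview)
Your proof is correct and follows essentially the same argument as the paper's: both use the decomposition $(\gg k)^n = \gg^n k_n$ (and the symmetric one for $(k\gg)^n$) with $k_n \in K$, evaluate $\mu(k_n) = n\mu(k)$ via the homomorphism property of $\mu|_K$ together with conjugation invariance, and then apply the defect bound once and divide by $n$. The only cosmetic difference is that the paper records $\mu(k_n) = \mu(k^n)$ rather than $n\mu(k)$, which is of course the same thing.
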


\begin{proof}
  For any $\gg \in \GG$, $k \in K$, and $n \in \mathbb{N}$, the equalities
  \[
    (k\gg)^n = k \cdot \gg k \gg^{-1} \cdot \gg^2 k \gg^{-2} \cdot \cdots \cdot \gg^{n-1} k \gg^{-(n-1)} \cdot \gg^{n}
  \]
  and
  \[
    (\gg k)^n = \gg^n \cdot \gg^{-(n-1)} k \gg^{n-1} \cdot \cdots \cdot \gg^{-2} k \gg^2 \cdot \gg^{-1} k \gg \cdot k
  \]
  hold.
  By (\ref{conj_inv}), the restriction $\mu|_{K}$ is $\GG$-invariant.
  Hence we have
  \[
    \mu(k \cdot \gg k \gg^{-1} \cdot \gg^2 k \gg^{-2} \cdot \cdots \cdot \gg^{n-1} k \gg^{-(n-1)}) = \mu(k^n)
  \]
  and
  \[
    \mu(\gg^{-(n-1)} k \gg^{n-1} \cdot \cdots \cdot \gg^{-2} k \gg^2 \cdot \gg^{-1} k \gg \cdot k) = \mu(k^n).
  \]
  These equalities imply that
  \begin{align*}
    n\cdot |\mu(k\gg) - \mu(k) - \mu(\gg)| = |\mu((k\gg)^n) - \mu(k^n) - \mu(\gg^n)| < D(\mu)
  \end{align*}
  and
  \begin{align*}
    n\cdot |\mu(\gg k) - \mu(\gg) - \mu(k)| = |\mu((\gg k)^n) - \mu(\gg^n) - \mu(k^n)| < D(\mu).
  \end{align*}
  Hence we obtain $\mu(k\gg) = \mu(k) + \mu(\gg)$ and $\mu(\gg k) = \mu(\gg) + \mu(k)$.
\end{proof}

\begin{theorem}\label{thm:isomorphism_theorem_general}
  The homomorphism $\mathfrak{d} \colon \mathcal{C}(\GG) \to H_{\grp}^2(G;\RR)$ induces an isomorphism
  \[
    (\mathcal{C}(\GG) \cap Q(\GG))/(H_{\grp}^1(\GG;\RR) + \pi^* Q(G)) \to \mathrm{Im}(\tau) \cap \mathrm{Im}(c_G).
  \]
\end{theorem}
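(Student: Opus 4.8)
The plan is to prove three things separately and then invoke the first isomorphism theorem: that $H_{\grp}^1(\GG;\RR)+\pi^*Q(G)$ is contained in $\mathcal{C}(\GG)\cap Q(\GG)$ and in the kernel of $\mathfrak{d}$ (so that $\mathfrak{d}$ descends to a homomorphism $\overline{\mathfrak{d}}$ on the quotient), that the image of $\overline{\mathfrak{d}}$ is exactly $\mathrm{Im}(\tau)\cap\mathrm{Im}(c_G)$, and that $\overline{\mathfrak{d}}$ is injective. The first point is a direct computation: a homomorphism $\GG\to\RR$ lies in $\mathcal{C}(\GG)\cap Q(\GG)$ trivially, for $\nu\in Q(G)$ the pullback $\pi^*\nu$ lies there because $\pi(k)=1_G$ for $k\in K$, and $\mathfrak{D}(f+\pi^*\nu)$ equals the coboundary $\delta\nu$ on $G$, hence $\mathfrak{d}(f+\pi^*\nu)=0$.

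For surjectivity onto $\mathrm{Im}(\tau)\cap\mathrm{Im}(c_G)$, the inclusion $\subseteq$ follows from the commutative diagram of Proposition \ref{prop:compatibility_transgression_qm_construction}, which gives $\mathfrak{d}=c_G\circ\mathfrak{d}_b=\tau\circ i^*$ on $\mathcal{C}(\GG)\cap Q(\GG)$. For $\supseteq$, fix $c\in\mathrm{Im}(\tau)\cap\mathrm{Im}(c_G)$. By exactness of the five-term sequence (\ref{seq:inf-res_ex_seq}) we have $\pi^*c=0$ in $H_{\grp}^2(\GG;\RR)$, so choosing a normalized bounded cocycle $u$ representing $c$ (which exists by a standard fact on bounded cohomology) we may write $\pi^*u=\delta F$ with $F\in C_{\grp}^1(\GG;\RR)$; boundedness of $u$ makes $F$ a quasi-morphism on $\GG$, and the normalization of $u$ forces $F(k\gg)=F(\gg k)=F(k)+F(\gg)$ and $F|_K\in\Hom(K,\RR)$, so that $F\in\mathcal{C}(\GG)$. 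Passing to the homogenization $\mu=\overline{F}\in Q(\GG)$, we have $\mu|_K=F|_K$, still a homomorphism, so Lemma \ref{lemma:qm_hom_on_K_CGG} gives $\mu\in\mathcal{C}(\GG)$; since $i^*\mu=i^*F$, Lemma \ref{lem:commut_diag_basics} yields $\mathfrak{d}(\mu)=\tau(i^*F)=\mathfrak{d}(F)=[\mathfrak{D}(F)]=[u]=c$.

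For injectivity, take $\mu\in\mathcal{C}(\GG)\cap Q(\GG)$ with $\mathfrak{d}(\mu)=0$ and write $\mathfrak{D}(\mu)=\delta\phi$ for some $\phi\in C_{\grp}^1(G;\RR)$. The identity $\delta\phi(g_1,g_2)=\mu(\gg_1)+\mu(\gg_2)-\mu(\gg_1\gg_2)$ for any lifts $\gg_j$ of $g_j$ shows $|\delta\phi|\le D(\mu)$, so $\phi$ is a quasi-morphism on $G$; moreover $\delta(\pi^*\phi)=\pi^*\mathfrak{D}(\mu)=\delta\mu$, hence $\pi^*\phi-\mu$ is a homomorphism $\GG\to\RR$. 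In particular $\pi^*\phi$ is already homogeneous, so with $\nu=\overline{\phi}\in Q(G)$ one has $\pi^*\nu=\overline{\pi^*\phi}=\pi^*\phi$ (homogenization commutes with $\pi^*$), and therefore $\mu=\pi^*\nu-(\pi^*\phi-\mu)\in\pi^*Q(G)+H_{\grp}^1(\GG;\RR)$.

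I expect the surjectivity step to be the main obstacle. The delicate points there are that the cochain $F$ produced by lifting $u$ need not be homogeneous and its restriction to $K$ need not be a homomorphism unless $u$ is taken normalized; one therefore has to homogenize, re-derive membership in $\mathcal{C}(\GG)$ via Lemma \ref{lemma:qm_hom_on_K_CGG}, and check that homogenization does not change the value of $\mathfrak{d}$. By contrast the kernel computation is short once one observes that any primitive of the bounded cocycle $\mathfrak{D}(\mu)$ is automatically a quasi-morphism on $G$ and that homogenization commutes with $\pi^*$.
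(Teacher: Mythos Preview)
Your argument is correct. The paper takes a different, more structural route: it assembles the exact sequences $H^1\to Q\to H_b^2\to H^2$ for $K$, $\GG$, and $G$ into one commutative diagram with exact rows and columns, invokes the injectivity of $\pi^*\colon H_b^2(G;\RR)\to H_b^2(\GG;\RR)$ (the Bouarich exact sequence for bounded cohomology of an extension), rewrites $\mathfrak{d}$ as the composite $c_G\circ(\pi^*)^{-1}\circ\mathbf{d}$, and then leaves both the kernel identification and the surjectivity onto $\mathrm{Im}(\tau)\cap\mathrm{Im}(c_G)$ to a diagram chase (with Lemma~\ref{lemma:qm_hom_on_K_CGG} used at one step of the chase). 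Your proof is more hands-on: you bypass the bounded-cohomology five-term sequence entirely by lifting a \emph{normalized} bounded representative of $c$ to a quasi-morphism $F\in\mathcal{C}(\GG)$, homogenizing, and re-entering $\mathcal{C}(\GG)$ via Lemma~\ref{lemma:qm_hom_on_K_CGG}; likewise your kernel step is the explicit observation that a primitive of $\mathfrak{D}(\mu)$ is automatically a quasi-morphism on $G$ whose pullback differs from $\mu$ by a homomorphism and is therefore already homogeneous. The payoff of your approach is that it is self-contained (no external result on $H_b^2$ of extensions) and makes the role of normalization and homogenization transparent; the paper's approach is shorter to write down once the diagram is in place and shows more clearly why the result is a formal consequence of the interplay between the ordinary and bounded five-term sequences.
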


\begin{proof}
  Let us consider the following commutative diagram whose rows and columns are exact:
  \[
  \xymatrix{
  H^1(K;\RR)^{\GG} \ar[r] & Q(K)^{\GG} \ar[r] & H_b^2(K;\RR)^{\GG} &  \\
  H^1(\GG;\RR) \ar[u] \ar[r] & Q(\GG) \ar[u] \ar[r]^-{\mathbf{d}} & H_b^2(\GG;\RR) \ar[u] \ar[r] & H^2(\GG;\RR) \\
  H^1(G;\RR) \ar[u] \ar[r] & Q(G) \ar[u]^-{\pi^*} \ar[r] & H_b^2(G;\RR) \ar[u]^-{\pi^*} \ar[r]^-{c_G} & H^2(G;\RR) \ar[u]^-{\pi^*} \\
   & & 0 \ar[u] & H^1(K;\RR)^{\GG}, \ar[u]^-{\tau}
  }
  \]
  where the exactness of the third column was shown in \cite{MR1338286}.
  By the definition of $\mathfrak{d}_b$, we have $\pi^* \mathfrak{d}_b(\mu) = \mathbf{d}(\mu)$ for $\mu \in \mathcal{C}(\GG) \cap Q(\GG)$.
  Hence the map $\pi^* \colon H_b^2(G;\RR) \to H_b^2(\GG;\RR)$ gives an isomorphism
  \[
    \pi^* \colon H_b^2(G;\RR) \xrightarrow{\cong} \mathbf{d}(\mathcal{C}(\GG) \cap Q(\GG)).
  \]
  Then, in this diagram, the map $\mathfrak{d}$ is given as the composite
  \[
    c_G \circ (\pi^*)^{-1} \circ \mathbf{d} \colon \mathcal{C}(\GG) \cap Q(\GG) \to H_{\grp}^2(G;\RR).
  \]

  The equality $\ke(\mathfrak{d}) = H_{\grp}^1(\GG;\RR) + \pi^* Q(G)$ is verified by a diagram chasing argument.
  By Lemma \ref{lemma:qm_hom_on_K_CGG} and a diagram chasing argument, the surjectivity of the map $\mathfrak{d} \colon \mathcal{C}(\GG) \cap Q(\GG) \to \mathrm{Im}(\tau) \cap \mathrm{Im}(c_G)$ also follows.
\end{proof}

\begin{comment}
\begin{proof}
  By Lemma \ref{lemma:qm_bdd_general_case}, the map
  \[
    \mathfrak{d} \colon \mathcal{C}(\GG) \cap Q(\GG) \to \mathrm{Im}(\tau) \cap \mathrm{Im}(c_G)
  \]
  is surjective.
  By Lemma \ref{lemma:iota_homom_zero}, the kernel of the above map is equal to $(H^1(\GG;\RR) + \pi^*(C^1(G;\RR))) \cap Q(\GG)$ and this is equal to
  \[
    H^1(\GG;\RR) + \pi^*Q(G).
  \]
  Thus the theorem follows.
\end{proof}
\end{comment}

\begin{remark}\label{remark:isomorphism_theorem_central_case}
  For a central extension $\GG$ of $G$, the homomorphism $\mathfrak{d} \colon \mathcal{C}(\GG) \to H^2(G;\RR)$ induces an isomorphism
  \[
    Q(\GG)/(H_{\grp}^1(\GG;\RR) + \pi^* Q(G)) \to \mathrm{Im}(\tau) \cap \mathrm{Im}(c_G)
  \]
  since $\mathcal{C}(\GG) \cap Q(\GG) = Q(\GG)$ (see Remark \ref{remark:central_ext_qm_bdd_comm_diag}).
\end{remark}

\section{On topological groups}\label{section top int}
\subsection{General topological groups}

Let $G$ be a topological group and $\pi \colon \tG \to G$ the universal covering.
Since the exact sequence
\begin{align}\label{seq:extension_univ_covering}
  0 \to \pi_1(G) \xrightarrow{i} \tG \xrightarrow{\pi} G \to 1
\end{align}
is a central extension (\cite[Theorem 15]{P})), we obtain the commutative diagram
\begin{align}\label{diagram:delta_tau_cG_commute}
  \xymatrix{
  Q(\tG) \ar[r]^-{\mathfrak{d}_b} \ar[rd]^-{\mathfrak{d}} \ar[d]^-{i^*} & H_b^2(G;\RR) \ar[d]^-{c_G}\\
  H_{\grp}^1(\pi_1(G);\RR) \ar[r]_-{\tau} & H_{\grp}^2(G;\RR)
  }
\end{align}
by Remark \ref{remark:central_ext_qm_bdd_comm_diag}.
Moreover, by Remark \ref{remark:isomorphism_theorem_central_case}, the homomorphism $\mathfrak{d} \colon Q(\tG) \to H_{\grp}^2(G;\RR)$ induces an isomorphism
\begin{align}\label{isom:tau_cap_cG}
  Q(\tG)/(H_{\grp}^1(\tG;\RR) + \pi^*Q(G)) \to \mathrm{Im}(\tau) \cap \mathrm{Im}(c_G).
\end{align}

In this section, %we give a topological interpretation of the class $\mathfrak{d}(\mu) \in H_{\grp}^2(G;\RR)$.
we clarify the relation between the class $\mathfrak{d}(\mu) \in H_{\grp}^2(G;\RR)$ and the primary obstruction class $\mathfrak{o} \in H^2(BG;\pi_1(G))$.

By taking the classifying spaces of (\ref{seq:extension_univ_covering}), we obtain a commutative diagram of fibrations
\[
\xymatrix{
B\pi_1(G) \ar[r] \ar@{=}[d] & B\tG^{\delta} \ar[r] \ar[d] & BG^{\delta} \ar[d]^{B\iota}\\
B\pi_1(G) \ar[r] & B\tG \ar[r] & BG.
}
\]
In what follows, we regard the pullback $B\iota^* \colon H^{\bullet}(BG;\RR) \to H^{\bullet}(BG^{\delta};\RR)$ as a homomorphism
\[
  B\iota^* \colon H^{\bullet}(BG;\RR) \to H_{\grp}^{\bullet}(G;\RR)
\]
under the isomorphism $H^{\bullet}(BG^{\delta};\RR) \cong H_{\grp}^{\bullet}(G;\RR)$.

\begin{lemma}\label{lemma:tau_equal_Bid_2}
  Let $(E_r^{p,q}, d_r^{p,q})$ be the $\RR$-coefficients cohomology Serre spectral sequence of the fibration $B\pi_1(G) \to B\tG \to BG$.
  Then the equality
  \[
    B\iota^* \circ d_2^{0,1} = \tau \colon H_{\grp}^1(\pi_1(G);\RR) \to H_{\grp}^2(G;\RR)
  \]
  holds, where we identify $E_2^{0,1} = H^1(B\pi_1(G);\RR)$ with $H_{\grp}^1(\pi_1(G);\RR)$.
\end{lemma}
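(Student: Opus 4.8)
The plan is to run naturality of the Serre spectral sequence through the morphism of fibrations displayed just above the statement, namely
\[
\xymatrix{
B\pi_1(G) \ar[r] \ar@{=}[d] & B\tG^{\delta} \ar[r] \ar[d] & BG^{\delta} \ar[d]^-{B\iota}\\
B\pi_1(G) \ar[r] & B\tG \ar[r] & BG.
}
\]
First I would record the bookkeeping about coefficient systems. Since $G$ is connected, $BG$ is one-connected, and since the extension $(\ref{seq:extension_univ_covering})$ is central, in the $\RR$-coefficients Serre spectral sequence $(E_r^{p,q},d_r^{p,q})$ of the bottom fibration the local system on $E_2^{0,1}$ is simple; thus $E_2^{0,1} = H^1(B\pi_1(G);\RR)$, which we identify with $H_{\grp}^1(\pi_1(G);\RR)$, while $E_2^{2,0} = H^2(BG;\RR)$. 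The same discussion applies to the top fibration $B\pi_1(G) \to B\tG^{\delta} \to BG^{\delta}$: its $\RR$-coefficients Serre spectral sequence $(\bar{E}_r^{p,q},\bar{d}_r^{p,q})$ has $\bar{E}_2^{0,1} = H_{\grp}^1(\pi_1(G);\RR)^{G} = H_{\grp}^1(\pi_1(G);\RR)$ (the $G$-action being trivial by centrality) and $\bar{E}_2^{2,0} = H^2(BG^{\delta};\RR) \cong H_{\grp}^2(G;\RR)$.

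Next, the morphism of fibrations induces a morphism of Serre spectral sequences $(E_r^{p,q},d_r^{p,q}) \to (\bar{E}_r^{p,q},\bar{d}_r^{p,q})$. On $E_2^{2,0}$ it is the pullback $B\iota^* \colon H^2(BG;\RR) \to H_{\grp}^2(G;\RR)$, and on $E_2^{0,1}$ it is the identity of $H_{\grp}^1(\pi_1(G);\RR)$, because the induced map of fibers is the identity of $B\pi_1(G)$. Naturality of the differential $d_2$ then gives the commutative square
\[
\xymatrix{
E_2^{0,1} \ar[r]^-{d_2^{0,1}} \ar@{=}[d] & E_2^{2,0} \ar[d]^-{B\iota^*}\\
\bar{E}_2^{0,1} \ar[r]^-{\bar{d}_2^{0,1}} & \bar{E}_2^{2,0},
}
\]
that is, $B\iota^* \circ d_2^{0,1} = \bar{d}_2^{0,1}$.

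Finally I would identify $\bar{d}_2^{0,1}$ with $\tau$. The fibration $B\pi_1(G) \to B\tG^{\delta} \to BG^{\delta}$ is the classifying-space fibration of the extension of discrete groups $0 \to \pi_1(G) \to \tG \to G \to 1$, and under the cochain-level identification $H^{\bullet}(BG^{\delta};\RR) \cong H_{\grp}^{\bullet}(G;\RR)$ (and likewise for $\tG$ and $\pi_1(G)$) its Serre spectral sequence coincides, at least in the range $p+q \leq 2$ needed here, with the Lyndon--Hochschild--Serre spectral sequence of that group extension. Under this identification $\bar{d}_2^{0,1} \colon H_{\grp}^1(\pi_1(G);\RR)^{G} \to H_{\grp}^2(G;\RR)$ is exactly the transgression map $\tau$ of the five-term exact sequence $(\ref{seq:inf-res_ex_seq})$. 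Combining the two displayed identities yields $B\iota^* \circ d_2^{0,1} = \tau$, as claimed. The only non-formal input is this last identification of the two spectral sequences; I expect it to be the main (and mild) obstacle, and I would settle it by citing the standard comparison of the Serre spectral sequence of a classifying-space fibration with the Lyndon--Hochschild--Serre spectral sequence (e.g.\ in Brown, \emph{Cohomology of Groups}), noting that it is used only in bidegrees with $p+q \leq 2$, where it can also be checked by hand. Everything else --- one-connectedness of $BG$, triviality of the relevant coefficient systems by centrality, and the fact that the fiber map in the morphism of fibrations is literally the identity --- is routine verification.
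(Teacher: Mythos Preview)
Your proof is correct and follows essentially the same route as the paper: both use naturality of the Serre spectral sequence for the morphism of fibrations $BG^{\delta} \to BG$ to obtain $B\iota^* \circ d_2^{0,1} = {}^{\delta}d_2^{0,1}$, and then identify ${}^{\delta}d_2^{0,1}$ with $\tau$ via the standard isomorphism between the Serre spectral sequence of $B\pi_1(G) \to B\tG^{\delta} \to BG^{\delta}$ and the Hochschild--Serre spectral sequence of the central extension (the paper cites Benson for this, you suggest Brown). Your write-up simply spells out more of the bookkeeping (simplicity of coefficient systems, identity on the fiber) that the paper leaves implicit.
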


\begin{proof}
  Let $({}^{\delta}E_r^{p, q}, {}^{\delta}d_r^{p,q})$ be the Hochschild-Serre spectral sequence of central extension (\ref{seq:extension_univ_covering}).
  Note that the spectral sequence $({}^{\delta}E_r^{p, q}, {}^{\delta}d_r^{p,q})$ is isomorphic to the Serre spectral sequence of the fibration $B\pi_1(G) \to B\tG^{\delta} \to BG^{\delta}$ (see \cite{benson_1991} for example).
  Since the map $\tau$ is equal to the derivation map ${}^{\delta}d_2^{0,1}$ by definition, the naturality of the Serre spectral sequence asserts that
  \[
    B\iota^* \circ d_2^{0,1}= {}^{\delta}d_2^{0,1} = \tau,
  \]
  and the lemma follows.
\end{proof}

\begin{comment}
\begin{remark}\label{remark:derivation_is_isom}
  Let $(E_r^{p,q}, d_r^{p,q})$ be the Serre spectral sequence used in Lemma \ref{lemma:tau_equal_Bid_2}, then the map $d_2^{0,1}$ is an isomorphism.
  Indeed, the spectral sequence induces an exact sequence
  \begin{align*}
    0 \to H^1(BG;\RR) \to H^1(&B\tG;\RR) \to H^1(B\pi_1(G);\RR)\\
    &\xrightarrow{d_2^{0,1}} H^2(BG;\RR) \to H^2(B\tG;\RR).
  \end{align*}
  Since $\tG$ is one-connected, the classifying space $B\tG$ is two-connected.
  Thus the cohomologies $H^1(B\tG;\RR)$ and $H^2(B\tG;\RR)$ are trivial and this implies that the derivation map $d_2^{0,1}$ is an isomorphism.
\end{remark}
\end{comment}

\begin{corollary}\label{cor:d_mu_obstruction_class}
  Let $\mathfrak{o} \in H^2(BG;\RR)$ be the primary obstruction class for $G$-bundles.
  Then, for any homogeneous quasi-morphism $\mu \in Q(\tG)$, the equality
  \[
    \mathfrak{d}(\mu) = -B\iota^* ((\mu|_{\pi_1(G)})_* \mathfrak{o})
  \]
  holds.
\end{corollary}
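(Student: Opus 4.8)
The plan is to chain together three facts already established: the commuting triangle in diagram (\ref{diagram:delta_tau_cG_commute}), Lemma \ref{lemma:tau_equal_Bid_2}, and Proposition \ref{prop:transgression_obstruction_homom}. Here $\mathfrak{o}$ is understood as the primary obstruction class in $H^2(BG;\pi_1(G))$, and $(\mu|_{\pi_1(G)})_*$ as the change-of-coefficients homomorphism $H^2(BG;\pi_1(G)) \to H^2(BG;\RR)$ induced by $\mu|_{\pi_1(G)}$.

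First I would check that the restriction $\mu|_{\pi_1(G)}$ is a genuine homomorphism. Since (\ref{seq:extension_univ_covering}) is a central extension, $\pi_1(G)$ is an abelian subgroup of $\tG$, so by (\ref{abel tashizan}) we have $\mu(ab) = \mu(a) + \mu(b)$ for all $a, b \in \pi_1(G)$. Thus $f := \mu|_{\pi_1(G)} = i^*\mu$ is an element of $H_{\grp}^1(\pi_1(G);\RR)$, and $(\mu|_{\pi_1(G)})_* = f_*$ makes sense. Next, the triangle in (\ref{diagram:delta_tau_cG_commute}) (which commutes because (\ref{seq:extension_univ_covering}) is central, via Remark \ref{remark:central_ext_qm_bdd_comm_diag}) gives $\mathfrak{d}(\mu) = \tau(i^*\mu) = \tau(f)$. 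Lemma \ref{lemma:tau_equal_Bid_2} identifies $\tau$ with $B\iota^* \circ d_2^{0,1}$, where $d_2^{0,1}$ is the $\RR$-coefficients transgression in the Serre spectral sequence of $B\pi_1(G) \to B\tG \to BG$, so $\mathfrak{d}(\mu) = B\iota^*\big(d_2^{0,1}(f)\big)$. Finally, Proposition \ref{prop:transgression_obstruction_homom} evaluates this transgression as $d_2^{0,1}(f) = -f_*\mathfrak{o}$, and substituting yields $\mathfrak{d}(\mu) = -B\iota^*(f_*\mathfrak{o}) = -B\iota^*\big((\mu|_{\pi_1(G)})_*\mathfrak{o}\big)$, which is the claim.

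There is essentially no hard step here; the entire content sits in the preceding lemmas. The only points that need a little care are the bookkeeping of signs (the obstruction class already carries a minus sign by Definition \ref{def:obs_class_via_s.s.}, and Proposition \ref{prop:transgression_obstruction_homom} produces another), and the fact that the two spectral sequences involved — the topological Serre spectral sequence of $B\pi_1(G) \to B\tG \to BG$ and the Hochschild–Serre spectral sequence of the central extension (\ref{seq:extension_univ_covering}) — are matched by $B\iota^*$, which is exactly the comparison provided by Lemma \ref{lemma:tau_equal_Bid_2}.
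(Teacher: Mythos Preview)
Your proof is correct and follows essentially the same route as the paper: both combine the commutative triangle (\ref{diagram:delta_tau_cG_commute}), Lemma \ref{lemma:tau_equal_Bid_2}, and Proposition \ref{prop:transgression_obstruction_homom} in the same order to obtain $\mathfrak{d}(\mu) = \tau(i^*\mu) = B\iota^*\circ d_2^{0,1}(\mu|_{\pi_1(G)}) = -B\iota^*((\mu|_{\pi_1(G)})_*\mathfrak{o})$. Your additional check that $\mu|_{\pi_1(G)}$ is a genuine homomorphism is a welcome clarification but not a departure from the paper's argument.
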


\begin{proof}
  Let $(E_r^{p,q}, d_r^{p,q})$ be the Serre spectral sequence as in Lemma \ref{lemma:tau_equal_Bid_2}.
  Using Proposition \ref{prop:transgression_obstruction_homom}, we obtain
  \[
    B\iota^* \circ d_2^{0,1}(\mu|_{\pi_1(G)}) = -B\iota^* ((\mu|_{\pi_1(G)})_* \mathfrak{o}).
  \]
  On the other hand, using Lemma \ref{lemma:tau_equal_Bid_2} and commutative diagram (\ref{diagram:delta_tau_cG_commute}), we obtain
  \[
    B\iota^* \circ d_2^{0,1}(\mu|_{\pi_1(G)}) = \tau(\mu|_{\pi_1(G)}) = \tau(i^*(\mu)) = \mathfrak{d}(\mu).
  \]
  Hence the equality $\mathfrak{d}(\mu) = -B\iota^* ((\mu|_{\pi_1(G)})_* \mathfrak{o})$ holds.
\end{proof}

\begin{corollary}\label{cor:Biota_injective}
  If $H^1(\tG;\RR)$ is trivial, then the homomorphism
  \[
    B\iota^* \colon H^2(BG;\RR) \to H_{\grp}^2(G;\RR)
  \]
  is injective.
\end{corollary}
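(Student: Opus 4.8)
The plan is to factor $B\iota^*$ through the transgression $\tau$ of the central extension (\ref{seq:extension_univ_covering}) and then detect its kernel via the five-term exact sequence. First I would recall, exactly as in Remark \ref{remark:derivation_is_isom}, that since $\tG$ is one-connected the classifying space $B\tG$ is two-connected, so in the $\RR$-coefficients Serre spectral sequence of the fibration $B\pi_1(G) \to B\tG \to BG$ both $H^1(B\tG;\RR)$ and $H^2(B\tG;\RR)$ vanish; consequently the derivation
\[
  d_2^{0,1} \colon H^1(B\pi_1(G);\RR) \xrightarrow{\ \cong\ } H^2(BG;\RR)
\]
is an isomorphism. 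Combining this with Lemma \ref{lemma:tau_equal_Bid_2}, which gives $B\iota^* \circ d_2^{0,1} = \tau$ under the identification of $H^1(B\pi_1(G);\RR)$ with $H_{\grp}^1(\pi_1(G);\RR)$, we obtain $B\iota^* = \tau \circ (d_2^{0,1})^{-1}$. Hence $B\iota^*$ is injective precisely when $\tau \colon H_{\grp}^1(\pi_1(G);\RR) \to H_{\grp}^2(G;\RR)$ is.

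Next I would invoke the five-term exact sequence (\ref{seq:inf-res_ex_seq}) for the central extension (\ref{seq:extension_univ_covering}),
\[
  0 \to H_{\grp}^1(G;\RR) \xrightarrow{\pi^*} H_{\grp}^1(\tG;\RR) \xrightarrow{i^*} H_{\grp}^1(\pi_1(G);\RR) \xrightarrow{\tau} H_{\grp}^2(G;\RR),
\]
where the $\tG$-action on $H_{\grp}^1(\pi_1(G);\RR)$ is trivial because $\pi_1(G)$ is central (as noted in Remark \ref{remark:central_ext_qm_bdd_comm_diag}), so $H_{\grp}^1(\pi_1(G);\RR)^{\tG} = H_{\grp}^1(\pi_1(G);\RR)$. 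Exactness gives $\ker \tau = \mathrm{Im}(i^*)$, so the hypothesis $H^1(\tG;\RR) = H_{\grp}^1(\tG;\RR) = 0$ forces $\ker \tau = 0$. Therefore $\tau$, and hence $B\iota^*$, is injective.

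I do not anticipate a genuine obstacle here: every ingredient — the isomorphism $d_2^{0,1}$, the identity $B\iota^* \circ d_2^{0,1} = \tau$, and the inflation–restriction sequence — is already established in the preceding sections, so the argument is essentially formal bookkeeping. The only point requiring a little care is to read $H^1(\tG;\RR)$ consistently as the discrete-group cohomology $H_{\grp}^1(\tG;\RR) \cong \Hom(\tG,\RR)$, the object that actually appears in the five-term sequence and in Corollary \ref{cor main}, rather than the (automatically trivial) singular cohomology $H^1(B\tG;\RR)$.
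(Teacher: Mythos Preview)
Your argument is correct and is essentially the same as the paper's: both use the five-term exact sequence to get injectivity of $\tau$ from $H_{\grp}^1(\tG;\RR)=0$, and then transfer this to $B\iota^*$ via Lemma \ref{lemma:tau_equal_Bid_2} together with the isomorphism $d_2^{0,1}$ from Remark \ref{remark:derivation_is_isom}. Your extra remarks on reading $H^1(\tG;\RR)$ as group cohomology and on the triviality of the $\tG$-action are welcome clarifications but do not change the route.
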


\begin{proof}
  By the five-term exact sequence %(see (\ref{seq:inf-res_ex_seq}))
  \begin{align*}
    0 \to H_{\grp}^1(G;\RR) \to H_{\grp}^1(&\tG;\RR) \to H_{\grp}^1(\pi_1(G);\RR)\\
    &\xrightarrow{\tau} H_{\grp}^2(G;\RR) \to H_{\grp}^2(\tG;\RR),
  \end{align*}
  the triviality of $H_{\grp}^1(\tG;\RR)$ implies the injectivity of the map $\tau$.
%  is exact and .
  Hence the map $B\iota^*$ is injective by Lemma \ref{lemma:tau_equal_Bid_2} and Remark \ref{remark:derivation_is_isom}.
\end{proof}

\begin{theorem}\label{thm:isom_thm_qm_bdd_ch_class}
  The homomorphism $\mathfrak{d} \colon Q(\tG) \to H_{\grp}^2(G;\RR)$ induces an isomorphism
  \[
    Q(\tG)/(H_{\grp}^1(\tG;\RR) + \pi^*Q(G)) \to \mathrm{Im}(B\iota^*) \cap \mathrm{Im}(c_G).
  \]
\end{theorem}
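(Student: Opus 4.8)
The plan is to reduce the statement to isomorphism (\ref{isom:tau_cap_cG}), which has already been obtained by applying Remark \ref{remark:isomorphism_theorem_central_case} (a special case of Theorem \ref{thm:isomorphism_theorem_general}) to the central extension (\ref{seq:extension_univ_covering}). That isomorphism says precisely that $\mathfrak{d} \colon Q(\tG) \to H_{\grp}^2(G;\RR)$ induces an isomorphism from $Q(\tG)/(H_{\grp}^1(\tG;\RR) + \pi^*Q(G))$ onto $\mathrm{Im}(\tau) \cap \mathrm{Im}(c_G)$. Hence the only thing left to prove is the equality of subspaces
\[
  \mathrm{Im}(\tau) = \mathrm{Im}(B\iota^*)
\]
inside $H_{\grp}^2(G;\RR)$; once this is established, $\mathrm{Im}(\tau) \cap \mathrm{Im}(c_G) = \mathrm{Im}(B\iota^*) \cap \mathrm{Im}(c_G)$ and the theorem follows immediately.

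To prove this equality I would use Lemma \ref{lemma:tau_equal_Bid_2}, which identifies $\tau$ with the composite $B\iota^* \circ d_2^{0,1}$, where $d_2^{0,1} \colon H^1(B\pi_1(G);\RR) \to H^2(BG;\RR)$ is the transgression in the $\RR$-coefficient Serre spectral sequence of the fibration $B\pi_1(G) \to B\tG \to BG$, under the usual identification $E_2^{0,1} \cong H_{\grp}^1(\pi_1(G);\RR)$. By Remark \ref{remark:derivation_is_isom}, the map $d_2^{0,1}$ is an isomorphism — the key input being that $B\tG$ is $2$-connected because $\tG$ is simply connected, so $H^1(B\tG;\RR) = H^2(B\tG;\RR) = 0$ and the relevant exact strand of the $E_2$-page forces $d_2^{0,1}$ to be bijective. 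Therefore
\[
  \mathrm{Im}(\tau) = \mathrm{Im}(B\iota^* \circ d_2^{0,1}) = B\iota^*\big(\mathrm{Im}(d_2^{0,1})\big) = B\iota^*\big(H^2(BG;\RR)\big) = \mathrm{Im}(B\iota^*),
\]
which is the desired equality.

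I do not expect a serious obstacle here: all the substantive work — the exactness of the rows and columns of the large diagram in the proof of Theorem \ref{thm:isomorphism_theorem_general} (including the exactness of the bounded-cohomology column from \cite{MR1338286}), the diagram chase computing $\ke(\mathfrak{d}) = H_{\grp}^1(\tG;\RR) + \pi^*Q(G)$ together with the surjectivity of $\mathfrak{d}$ onto $\mathrm{Im}(\tau)\cap\mathrm{Im}(c_G)$, and the spectral-sequence identification of $\tau$ with $B\iota^* \circ d_2^{0,1}$ — has been carried out in the preceding lemmas and remarks. The one point requiring a little care is that the identifications $H^{\bullet}(BG^{\delta};\RR) \cong H_{\grp}^{\bullet}(G;\RR)$ and $E_2^{0,1} \cong H_{\grp}^1(\pi_1(G);\RR)$ used here are literally the ones fixed earlier, so that $\tau$ and $B\iota^* \circ d_2^{0,1}$ coincide on the nose as maps landing in $H_{\grp}^2(G;\RR)$, not merely up to isomorphism; this is exactly what Lemma \ref{lemma:tau_equal_Bid_2} guarantees via the naturality of the Serre spectral sequence applied to the commutative square of fibrations relating $B\tG^{\delta} \to BG^{\delta}$ and $B\tG \to BG$.
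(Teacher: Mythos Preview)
Your proposal is correct and follows essentially the same approach as the paper: you reduce to isomorphism (\ref{isom:tau_cap_cG}) and then establish $\mathrm{Im}(\tau) = \mathrm{Im}(B\iota^*)$ via Lemma \ref{lemma:tau_equal_Bid_2} together with Remark \ref{remark:derivation_is_isom}, which is exactly what the paper does.
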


\begin{proof}
  The equality
  \[
    \mathrm{Im}(B\iota^*) = \mathrm{Im}(\tau)
  \]
  holds by Lemma \ref{lemma:tau_equal_Bid_2} and Remark \ref{remark:derivation_is_isom}.
  Hence, isomorphism (\ref{isom:tau_cap_cG}) implies the theorem.
\end{proof}

The following corollary immediately follows from Theorem \ref{thm:isom_thm_qm_bdd_ch_class}.

\begin{corollary}\label{cor:isom_thm_tG_perfect_case}
  If the first cohomology $H_{\grp}^1(\tG;\RR)$ is trivial, then the homomorphism $\mathfrak{d}$ induces an isomorphism
  \[
    Q(\tG)/\pi^*Q(G) \to \mathrm{Im}(B\iota^*) \cap \mathrm{Im}(c_G).
  \]
  In particular, if $\mu \in Q(\tG)$ does not descend to $G$, then the class $\mathfrak{d}(\mu) \in H_{\grp}^2(G;\RR)$ is non-zero.
\end{corollary}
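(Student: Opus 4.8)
The plan is to deduce this statement as a direct specialization of Theorem~\ref{thm:isom_thm_qm_bdd_ch_class}. That theorem asserts that $\mathfrak{d}\colon Q(\tG)\to H_{\grp}^2(G;\RR)$ induces an isomorphism
\[
  Q(\tG)/\big(H_{\grp}^1(\tG;\RR)+\pi^*Q(G)\big)\xrightarrow{\ \cong\ }\mathrm{Im}(B\iota^*)\cap\mathrm{Im}(c_G).
\]
Under the standing hypothesis $H_{\grp}^1(\tG;\RR)=0$, the subspace $H_{\grp}^1(\tG;\RR)+\pi^*Q(G)$ appearing in the denominator is simply $\pi^*Q(G)$, so the isomorphism becomes
\[
  Q(\tG)/\pi^*Q(G)\xrightarrow{\ \cong\ }\mathrm{Im}(B\iota^*)\cap\mathrm{Im}(c_G),
\]
which is the first assertion. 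So the first step is just this substitution; there is no further work to do here.

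For the ``in particular'' clause, I would first record the (routine) observation that $\pi^*\colon Q(G)\to Q(\tG)$ is injective: if $\pi^*\nu=\nu\circ\pi=0$ for $\nu\in Q(G)$, then $\nu$ vanishes on $\pi(\tG)=G$ by surjectivity of $\pi$, hence $\nu=0$. Thus the condition ``$\mu$ descends to $G$'' is equivalent to ``$\mu\in\pi^*Q(G)$'', and the hypothesis $\mu\notin\pi^*Q(G)$ means precisely that the class $[\mu]$ is nonzero in the quotient $Q(\tG)/\pi^*Q(G)$. Applying the isomorphism $\mathfrak{d}$ just established, $\mathfrak{d}(\mu)$ is a nonzero element of $\mathrm{Im}(B\iota^*)\cap\mathrm{Im}(c_G)\subset H_{\grp}^2(G;\RR)$, which is exactly the claim.

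Since the entire substance is carried by Theorem~\ref{thm:isom_thm_qm_bdd_ch_class} (and, upstream of it, by Remark~\ref{remark:isomorphism_theorem_central_case} and Lemma~\ref{lemma:tau_equal_Bid_2}), there is no genuine obstacle in this corollary; the only point requiring a moment's care is confirming that ``does not descend'' is the exact negation of lying in $\mathrm{Im}(\pi^*)$, i.e.\ the injectivity of $\pi^*$ noted above, so that nonvanishing of $[\mu]$ in the quotient is legitimately inferred.
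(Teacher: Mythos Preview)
Your proof is correct and matches the paper's approach exactly: the paper simply states that the corollary ``immediately follows from Theorem~\ref{thm:isom_thm_qm_bdd_ch_class}'', and you have spelled out precisely that immediate deduction. Your added remark on the injectivity of $\pi^*$ is a harmless clarification; in the paper ``$\mu$ does not descend to $G$'' is already taken to mean $\mu\notin\pi^*Q(G)$ (see Corollary~\ref{cor main}), so no justification is strictly needed.
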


By using Corollary \ref{cor:d_mu_obstruction_class}, Corollary \ref{cor:Biota_injective}, and Theorem \ref{thm:isom_thm_qm_bdd_ch_class}, we obtain the following corollary.

\begin{corollary}\label{cor:bounded_unbounded_qm}
Let $G$ be a topological group and $\tG$ the universal covering of $G$.
  \begin{enumerate}
    \item
    Let $\tmu\colon \tG \to \RR$ be a homogeneous quasi-morphism which does not descend to $G$.
    Let $\mathfrak{o} \in H^2(BG;\pi_1(G))$ denote the primary obstruction class of $G$.
    Then, the cohomology class
    \[
      B\iota^*(((\mu|_{\pi_1(G)})_*\mathfrak{o})_{\RR}) \in H_{\grp}^2(G;\RR)
    \]
    is bounded.
    Here, $(\mu|_{\pi_1(G)})_* \colon H^2(BG;\pi_1(G)) \to H^2(BG;\RR)$ is the change of coefficients homomorphism induced from $\mu|_{\pi_1(G)} \colon \pi_1(G) \to \RR$.

    \item
    Assume that the space $Q(\tG)$ is trivial.
    Then, for any non-zero element $\mathfrak{c}$ of $H^2(BG;\RR)$, a cohomology class
    \[
      (B\iota)^*(\mathfrak{c}) \in H_{\grp}^2(G;\RR)
    \]
    is unbounded.
  \end{enumerate}
\end{corollary}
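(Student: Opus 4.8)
The plan is to deduce both parts directly from the results already established in the excerpt. For part (1), the key tool is Corollary~\ref{cor:d_mu_obstruction_class}, which identifies the group cohomology class $\mathfrak{d}(\tmu)$ with $-B\iota^*((\tmu|_{\pi_1(G)})_*\mathfrak{o})$. Since $\mathfrak{d}$ factors as $c_G \circ \mathfrak{d}_b$ through bounded cohomology (by Proposition~\ref{prop:compatibility_transgression_qm_construction}, or equivalently by diagram~(\ref{diagram:delta_tau_cG_commute})), the class $\mathfrak{d}(\tmu)$ lies in $\mathrm{Im}(c_G)$, hence is bounded. Therefore $B\iota^*((\tmu|_{\pi_1(G)})_*\mathfrak{o})$ is bounded, and the same holds after identifying the real coefficients class; I would add one line to note that $(\tmu|_{\pi_1(G)})_*\mathfrak{o}$ is already a real class, or invoke the remark that boundedness over $\ZZ$ and over $\RR$ agree. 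Note that the hypothesis ``$\tmu$ does not descend'' is not actually needed for boundedness of this particular class — it is only what makes the class potentially nonzero — so I would simply state the boundedness for an arbitrary $\tmu \in Q(\tG)$ and keep the non-descending hypothesis for context.

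For part (2), the argument runs in the opposite direction. Suppose $Q(\tG) = 0$. First I would observe that this forces $H_{\grp}^1(\tG;\RR) = 0$, since $H_{\grp}^1(\tG;\RR) \hookrightarrow Q(\tG)$ by the exact sequence~(\ref{ex seq qm bdd coh}). Then Theorem~\ref{thm:isom_thm_qm_bdd_ch_class} (together with Corollary~\ref{cor:isom_thm_tG_perfect_case}) gives that $\mathrm{Im}(B\iota^*) \cap \mathrm{Im}(c_G)$ is the image of $Q(\tG)/\pi^*Q(G) = 0$, hence is the zero subspace of $H_{\grp}^2(G;\RR)$. On the other hand, by Corollary~\ref{cor:Biota_injective} the triviality of $H_{\grp}^1(\tG;\RR)$ makes $B\iota^* \colon H^2(BG;\RR) \to H_{\grp}^2(G;\RR)$ injective. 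So for any nonzero $\mathfrak{c} \in H^2(BG;\RR)$, the class $B\iota^*(\mathfrak{c})$ is a nonzero element of $H_{\grp}^2(G;\RR)$; if it were bounded it would lie in $\mathrm{Im}(c_G) \cap \mathrm{Im}(B\iota^*) = 0$, a contradiction. Hence $B\iota^*(\mathfrak{c})$ is unbounded.

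Both halves are essentially bookkeeping on top of the machinery already in place, so I do not expect a genuine obstacle; the one point requiring a little care is the passage between $\ZZ$- and $\RR$-coefficient obstruction classes and the corresponding boundedness statements in part (1), for which I would cite the remark following Corollary~\ref{cor bounded and unbounded} (which in turn points to \cite[Lemma 29]{Ch}). Beyond that, the proof is a short assembly: part (1) = ``$\mathfrak{d}$ lands in $\mathrm{Im}(c_G)$'' plus Corollary~\ref{cor:d_mu_obstruction_class}; part (2) = ``$Q(\tG)=0 \Rightarrow H^1_{\grp}(\tG;\RR)=0 \Rightarrow B\iota^*$ injective'' plus ``$\mathrm{Im}(B\iota^*)\cap\mathrm{Im}(c_G)=0$'' from the isomorphism theorem.
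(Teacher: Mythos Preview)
Your proposal is correct and matches the paper's approach exactly: the paper simply states that the corollary follows from Corollary~\ref{cor:d_mu_obstruction_class}, Corollary~\ref{cor:Biota_injective}, and Theorem~\ref{thm:isom_thm_qm_bdd_ch_class}, and your argument spells out precisely how these combine. One minor point: your caution about $\ZZ$- versus $\RR$-coefficients in part~(1) is unnecessary, since $(\tmu|_{\pi_1(G)})_*\mathfrak{o}$ already lies in $H^2(BG;\RR)$ (the change-of-coefficients map is induced by $\tmu|_{\pi_1(G)}\colon \pi_1(G)\to\RR$), so no appeal to \cite[Lemma~29]{Ch} is needed.
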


\subsection{Hamiltonian and contact Hamiltonian diffeomorphism groups}

We set $G_{\lambda} = \Ham(S^2 \times S^2, \omega_{\lambda})$ and $H = \Cont_0(S^3,\xi)$.
For $1 < \lambda \leq 2$, it is known that $\pi_1(G_{\lambda}) \cong \ZZ \times \ZZ/2\ZZ \times \ZZ/2\ZZ$ (\cite{A}) and $\pi_1(H) \cong \ZZ$ (\cite{El}).
By Remark \ref{remark:derivation_is_isom}, we have
\[
  H^2(BG_{\lambda};\ZZ) \cong H^1(B\pi_1(G_{\lambda});\ZZ) \cong \Hom(\pi_1(G_{\lambda}),\ZZ) \cong \ZZ
\]
and
\[
  H^2(BH;\ZZ) \cong H^1(B\pi_1(H);\ZZ) \cong \Hom(\pi_1(H),\ZZ) \cong \ZZ.
\]
Let $\mathfrak{o}_{H}$ be the primary obstruction class of $H$-bundles, which is a generator of $H^2(BH;\ZZ)$.
The primary obstruction class $\mathfrak{o}$ of $G_{\lambda}$-bundles is defined as an identity homomorphism in $H^2(BG_{\lambda};\pi_1(G_{\lambda})) \cong \Hom(\pi_1(G_{\lambda}),\pi_1(G_{\lambda}))$.
Let
\begin{align}\label{homom_phi}
  \phi \colon \pi_1(G_{\lambda}) \cong \ZZ \times \ZZ/2\ZZ \times \ZZ/2\ZZ \to \ZZ
\end{align}
be the homomorphism sending $(n, a, b) \in \ZZ \times \ZZ/2\ZZ \times \ZZ/2\ZZ$ to $n \in \ZZ$.
We set
\[
  \mathfrak{o}_{G_{\lambda}} = \phi_*\mathfrak{o} \in H^2(BG_{\lambda};\ZZ).
\]
Then the class $\mathfrak{o}_{G_{\lambda}}$ is a generator of $H^2(BG_{\lambda};\ZZ)$.

%Because the fundamental group $\pi_1(\Cont_0(S^3,\xi))$ is isomorphic to $\ZZ$ (\cite{El}), the primary obstruction class $\mathfrak{o}_{S^3}$ for $\Cont_0(S^3,\xi)$-bundles is defined as an element of $H^2(B\Cont_0(S^3,\xi);\ZZ)$.

\begin{proof}[Proof of Corollary $\ref{cor bounded and unbounded}$]
  %\end{proof}By Lemma \ref{lemma z bdd r bdd}, it is enough to show that the same statements as real cohomology classes.

 First, we prove (1).
 %Ostrover introduced in \cite{Os} that the homogeneous quasi-morphism $\mu^{\lambda}$ on $\widetilde{\Ham}(S^2 \times S^2, \omega_\lambda)$ that does not descend to $\Ham(S^2 \times S^2, \omega_\lambda)$.
 Recall that the restriction $\mu^{\lambda}|_{\pi_1(G_{\lambda})}$ of Ostrover's Calabi quasi-morphism is a non-trivial homomorphism to $\RR$ (Proposition \ref{ostrover loop}). Hence there exists a non-zero constant $a$ such that
  \[
    \phi = a\mu^{\lambda}|_{\pi_1(G_{\lambda})} \colon \pi_1(G_{\lambda}) \to \RR,
  \]
  where $\phi$ is the homomorphism given as (\ref{homom_phi}).
  Therefore we have
  \[
    B\iota^*(\mathfrak{o}_{G_{\lambda}})_{\RR} = a\cdot B\iota^*((\mu|_{\pi_1(G_{\lambda})})_*\mathfrak{o})_{\RR}.
  \]
  Since Ostrover's Calabi quasi-morphism does not descend to $G_\lambda$, the class $B\iota^*(\mathfrak{o}_{G_{\lambda}})_{\RR}$ is bounded by Corollary \ref{cor:bounded_unbounded_qm} (1).

  Next, we prove (2).
  Because the universal covering group $\widetilde{H} = \widetilde{\Cont}_0(S^3,\xi)$ is uniformly perfect (see \cite[Corollary 3.6 and Remark 3.7]{FPR18}), we have $Q(\widetilde{H}) = 0$.
  By Corollary \ref{cor:bounded_unbounded_qm} (2), the class $B\iota^*(\mathfrak{o}_H)_{\RR}$ is unbounded.
\end{proof}

In Section \ref{MW ineq}, we provide another proof of Corollary \ref{cor bounded and unbounded} (2)  by using a Milnor-Wood type inequality (Theorem \ref{MW inequality}) instead of Corollary \ref{cor:bounded_unbounded_qm} (2) (see Remark \ref{large cn}).

We end this section with a proof of Corollary \ref{bdd cohom on Ham}.
To do this, we prepare the following lemma.

\begin{lemma}\label{lem inj to bdd coh}
  For any topological group $G$ whose universal covering group $\tG$ satisfies $H_{\grp}^1(\tG;\RR) = 0$, the map
  \[
    \mathfrak{d}_b \colon Q(\tG) \to H_b^2(G;\RR)
  \]
  is injective.
\end{lemma}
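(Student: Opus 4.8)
The plan is to reduce the statement to the injectivity of the standard map $\mathbf{d}\colon Q(\tG)\to H_b^2(\tG;\RR)$ appearing in the exact sequence (\ref{ex seq qm bdd coh}) for the group $\tG$ itself. The bridge I would use is the identity $\pi^*\circ\mathfrak{d}_b=\mathbf{d}$ on $Q(\tG)$, which was already recorded in the proof of Theorem \ref{thm:isomorphism_theorem_general} (here $\mathcal{C}(\tG)\cap Q(\tG)=Q(\tG)$, since (\ref{seq:extension_univ_covering}) is central, by Remark \ref{remark:central_ext_qm_bdd_comm_diag}). Concretely, $\mathfrak{d}_b(\mu)$ is represented by the bounded cocycle $\mathfrak{D}(\mu)$ on $G$, whose pullback along $\pi$ is $-\delta\mu$ on $\tG$ (as in the proof of Lemma \ref{lemma:I_cocycle}), a bounded cocycle representing $\mathbf{d}(\mu)$.

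First I would take $\mu\in Q(\tG)$ with $\mathfrak{d}_b(\mu)=0$ in $H_b^2(G;\RR)$, apply the pullback $\pi^*\colon H_b^2(G;\RR)\to H_b^2(\tG;\RR)$, and invoke the identity above to get $\mathbf{d}(\mu)=\pi^*\mathfrak{d}_b(\mu)=0$ in $H_b^2(\tG;\RR)$. Then I would use the exactness of
\[
  0\to H_{\grp}^1(\tG;\RR)\to Q(\tG)\xrightarrow{\mathbf{d}}H_b^2(\tG;\RR)
\]
at $Q(\tG)$, which says precisely that $\ke(\mathbf{d})=H_{\grp}^1(\tG;\RR)$; the hypothesis $H_{\grp}^1(\tG;\RR)=0$ then forces $\mu=0$, so $\mathfrak{d}_b$ is injective.

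I do not expect a genuine obstacle, since every ingredient has already been set up and the argument is essentially a two-line diagram chase. The only point needing a little care is the sign convention in the relation $\pi^*\circ\mathfrak{d}_b=\pm\mathbf{d}$, but this is irrelevant here because we use only the triviality of $\ke(\mathbf{d})$. As an alternative route, one could instead note that $\pi^*\colon H_b^2(G;\RR)\to H_b^2(\tG;\RR)$ is itself injective (exactness of the third column in the diagram in the proof of Theorem \ref{thm:isomorphism_theorem_general}, from \cite{MR1338286}) and argue in the same way, but this stronger input is not needed.
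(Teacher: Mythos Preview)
Your proposal is correct and follows essentially the same argument as the paper: both use the identity $\pi^*\circ\mathfrak{d}_b=\mathbf{d}$ on $Q(\tG)$ together with the injectivity of $\mathbf{d}\colon Q(\tG)\to H_b^2(\tG;\RR)$ coming from exact sequence (\ref{ex seq qm bdd coh}) and the hypothesis $H_{\grp}^1(\tG;\RR)=0$. The only cosmetic difference is that the paper phrases it positively (nonzero $\mu$ gives nonzero $\mathfrak{d}_b(\mu)$) while you argue contrapositively.
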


\begin{proof}
  By exact sequence (\ref{ex seq qm bdd coh}) and the assumption $H_{\grp}^1(\tG;\RR) = 0$, the map $Q(\tG) \to H_b^2(\tG;\RR)$ is injective. Hence, for any homogeneous quasi-morphism $\tmu \in Q(\tG)$, the bounded cohomology class $[\delta \tmu] \in H_b^2(\tG;\RR)$ is non-zero.
  Since
  \[
    \pi^*(\mathfrak{d}_b(\tmu)) = [\delta \tmu] \in H_b^2(\tG;\RR),
  \]
  where $\pi^* \colon H_b^2(G;\RR) \to H_b^2(\tG;\RR)$ is the homomorphism induced by the universal covering $\pi \colon \tG \to G$, the class $\mathfrak{d}_b(\tmu)$ is also non-zero.
\end{proof}

\begin{proof}[Proof of Corollary $\ref{bdd cohom on Ham}$]
  Because $\widetilde{\Ham}(M,\omega)$ is perfect (\cite{Ba78}), Lemma \ref{lem inj to bdd coh} implies Corollary \ref{bdd cohom on Ham}.
\end{proof}

\begin{comment}
{\kwsk
\section{Generalization of Corollary \ref{cor bounded and unbounded}}

We have the following generalization of Corollary \ref{cor bounded and unbounded}
\begin{theorem}
Let $G$ be a topological group.
  \begin{enumerate}
    \item
    Let $\tmu\colon \tG \to \RR$ be a homogeneous quasi-morphism on the universal covering $\tG$ of $G$, which does not descend to $G$.
    Let $\mathfrak{o} \in H^2(BG;\pi_1(G))$ denote the the primary obstruction class of $G$.
    Then,
    \[
      (B\iota)^*((\mu|_{\pi_1(G)})_*\mathfrak{o})_{\RR} \in H_{\grp}^2(G;\RR)
    \]
    is bounded.
    Here, $(\mu|_{\pi_1(G)})_* \colon H^2(BG;\pi_1(G)) \to H^2(BG;\RR)$ is the map induced from $\mu|_{\pi_1(G)} \colon \pi_1(G) \to \RR$.

    \item
    Assume that $\tG$ is uniformly perfect.
    Then, for any non-trivial element $\mathfrak{c}$ of $H^2(BG;\RR)$,
    the cohomology class
    \[
      (B\iota)^*(\mathfrak{c}) \in H_{\grp}^2(G;\RR)
    \]
    is unbounded.
  \end{enumerate}
\end{theorem}
}
\end{comment}

\section{Milnor-Wood type inequality and bundles with no flat structures}\label{MW ineq}

In this section, we show the existence of bundles over a surface which do not admit foliated (flat) structures.
To this end, first we introduce a Milnor-Wood type inequality.

\begin{comment}
In this section, we show the Milnor-Wood type inequality for elements in
\[
  \mathrm{Im}(c_G) \cap \mathrm{Im}(B\iota^*) \subset H_{\grp}^2(G;\RR).
\]
\end{comment}
Let $c$ be a universal characteristic class of foliated principal $G$-bundles.
Then the characteristic class $c$ is given as an element in $H^2(BG^{\delta};\RR)$.
For a foliated principal $G$-bundle $G \to E \to B$, the characteristic class $c(E)$ of $E$ associated to $c$ is defined by
\[
  c(E) = f^*c \in H^2(B;\RR),
\]
where $f \colon B \to BG^{\delta}$ is the classifying map of $E$.

Let $\Sigma_h$ denote a closed oriented surface of genus $h \geq 1$ and $G \to E \to \Sigma_h$ be a foliated $G$-bundle.
Let $\rho \colon \pi_1(\Sigma_h) \to G$ be a holonomy homomorphism of the bundle $E$.
Then the classifying map of the bundle $E$ is given by
\[
  B\rho \colon \Sigma_h \simeq B\pi_1(\Sigma_h) \to BG^{\delta}.
\]

\begin{theorem}\label{MW inequality}
  Let $c$ be an element of $\mathrm{Im}(c_G) \cap \mathrm{Im}(B\iota^*)$ and $[\Sigma_h] \in H_2(\Sigma_h;\ZZ)$ the fundamental class of $\Sigma_h$.
  Then, for any foliated principal $G$-bundle $G \to E \to \Sigma_h$,
  % with holonomy homomorphism $\rho \colon \pi_1(\Sigma_h) \to G$.
  an inequality
  \[
    |\langle c(E), [\Sigma_h] \rangle | \leq D(\tmu)(4h-4)
  \]
  holds, where $\tmu \in Q(\tG)$ is a homogeneous quasi-morphism satisfying $\mathfrak{d}(\tmu) = c$.
\end{theorem}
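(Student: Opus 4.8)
The plan is to represent $c$ by an explicit bounded cocycle, transport it to $\Sigma_h$ via the holonomy, and then pair it against the fundamental class using the simplicial volume of the surface. Concretely: since $c = \mathfrak{d}(\tmu) = [\mathfrak{D}(\tmu)]$ by hypothesis, the class $c$ is represented by the group $2$-cocycle $\mathfrak{D}(\tmu) \in C_{\grp}^2(G;\RR)$. From the defining formula $\mathfrak{D}(\tmu)(g_1,g_2) = \tmu(\gg_1) + \tmu(\gg_2) - \tmu(\gg_1\gg_2)$ (independent of the chosen lifts $\gg_j$ of $g_j$, by Lemma~\ref{lemma:I_welldef}) together with the definition of the defect, this cocycle is bounded with $\|\mathfrak{D}(\tmu)\|_\infty \le D(\tmu)$. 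Under the identifications $H^\bullet(BG^\delta;\RR) \cong H_{\grp}^\bullet(G;\RR)$ and $H^\bullet(\Sigma_h;\RR) \cong H_{\grp}^\bullet(\pi_1(\Sigma_h);\RR)$ (the latter since $\Sigma_h$ is a $K(\pi,1)$ for $h \ge 1$), the classifying map $B\rho$ of $E$ induces the pullback $\rho^*$ in group cohomology, so that $c(E) = \rho^* c$ is represented by the bounded group cocycle $\rho^*\mathfrak{D}(\tmu)$, which still satisfies $\|\rho^*\mathfrak{D}(\tmu)\|_\infty \le D(\tmu)$.

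The second step is to invoke the standard duality estimate between bounded cohomology and $\ell^1$-homology: for any bounded singular cocycle $\beta$ on a space $X$ and any class $z \in H_\bullet(X;\RR)$ one has $|\langle [\beta], z \rangle| \le \|\beta\|_\infty \cdot \|z\|_1$, where $\|z\|_1$ is the Gromov ($\ell^1$-)norm. Applying this with $X = \Sigma_h$, $\beta = \rho^*\mathfrak{D}(\tmu)$, $z = [\Sigma_h]$, and recalling Gromov's computation $\|[\Sigma_h]\|_1 = 2|\chi(\Sigma_h)| = 4h-4$ (which is $0$ when $h = 1$), we obtain
\[
  |\langle c(E), [\Sigma_h] \rangle| = |\langle [\rho^*\mathfrak{D}(\tmu)], [\Sigma_h] \rangle| \le \|\rho^*\mathfrak{D}(\tmu)\|_\infty \cdot \|[\Sigma_h]\|_1 \le D(\tmu)(4h-4),
\]
which is exactly the claimed inequality. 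On this route the only non-formal input is Gromov's value of the simplicial volume of a surface; everything else is the bookkeeping that $c(E)$ is represented by the norm-$D(\tmu)$ cocycle $\rho^*\mathfrak{D}(\tmu)$.

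There is a more classical, self-contained alternative. Using Corollary~\ref{cor:d_mu_obstruction_class}, $c = -B\iota^*\big((\tmu|_{\pi_1(G)})_*\mathfrak{o}\big)$, and Milnor's algorithm for evaluating an obstruction class on a surface group shows that $|\langle c(E),[\Sigma_h]\rangle| = |\tmu(z)|$, where $z = \prod_{i=1}^h[\alpha_i,\beta_i] \in \pi_1(G) \subset \tG$ for arbitrary lifts $\alpha_i,\beta_i \in \tG$ of the holonomies of the standard generators $a_i,b_i$ of $\pi_1(\Sigma_h)$. Since $\tmu$ is a homogeneous quasi-morphism one has $|\tmu([\alpha,\beta])| \le D(\tmu)$, hence $|\tmu(\prod_{i=1}^h[\alpha_i,\beta_i])| \le (2h-1)D(\tmu) \le (4h-4)D(\tmu)$ as soon as $h \ge 2$; for $h = 1$ the holonomies of $a_1,b_1$ commute in $G$, so by the conjugation-invariance of $\tmu$ and the centrality of $\pi_1(G)$ in $\tG$ we even get $\tmu(z) = 0 = (4h-4)D(\tmu)$. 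On this route the step that needs care, and where I expect the bulk of the work to lie, is the justification of Milnor's algorithm for an arbitrary topological group $G$, i.e.\ the identification of the primary obstruction number of the foliated $G$-bundle $E$ with the product of commutators of lifts; the simplicial-volume route bypasses this entirely.
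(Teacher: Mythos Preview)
Your first route is essentially the paper's own proof: the paper likewise represents $c$ by the bounded cocycle $\mathfrak{D}(\tmu)$, records $\|\mathfrak{D}(\tmu)\|_\infty = D(\tmu)$ (via $\pi^*\mathfrak{D}(\tmu) = \delta\tmu$), pulls back along the holonomy $\rho$, and pairs against $[\Sigma_h]$ using $\|\Sigma_h\| = 4h-4$. The only cosmetic difference is that the paper explicitly cites Gromov's isometric isomorphism $H_b^\bullet(\Sigma_h)\cong H_b^\bullet(\pi_1(\Sigma_h))$ to pass between space and group, whereas you work directly at the cocycle level using that $\Sigma_h$ is aspherical for $h\ge 1$.

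Your second route, via Milnor's algorithm and the estimate $|\tmu([\alpha,\beta])|\le D(\tmu)$, is a genuinely different argument that the paper does not use for this theorem (though Milnor's algorithm appears later, in Remark~\ref{large cn}). It trades the simplicial-volume input for the obstruction-theoretic identification of the characteristic number with $\tmu$ evaluated on a product of commutators of lifts, and even yields the sharper constant $2h-1$ for $h\ge 2$; your handling of the $h=1$ case (using conjugation-invariance to get $\tmu(\alpha\beta)=\tmu(\beta\alpha)$ and then centrality of $\pi_1(G)$ to conclude $\tmu(z)=0$) is correct. As you rightly flag, the step needing justification on that route is Milnor's formula for a general topological group $G$, which the simplicial-volume argument sidesteps.
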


\begin{proof}
  By Theorem \ref{thm:isom_thm_qm_bdd_ch_class}, there exists a homogeneous quasi-morphism $\tmu \in Q(\tG)$ satisfying $\mathfrak{d}(\tmu) = [\mathfrak{D}(\tmu)] = c$.
  Since $\pi^* \mathfrak{D}(\tmu) = \delta \tmu$, we have
  \[
    \| \mathfrak{D}(\tmu) \|_{\infty} = \| \delta \tmu \|_{\infty} = D(\tmu).
  \]
  In particular, we have $\| \mathfrak{d}(\tmu) \|_{\infty} \leq D(\tmu)$.
  Let $\rho \colon \pi_1(\Sigma_h) \to G$ be a holonomy homomorphism associated with the foliated bundle $G \to E \to \Sigma_h$.
  Since the operator norm of $\rho^* \colon H_{b}^2(G;\RR) \to H_{b}^2(\pi_1(\Sigma_h);\RR)$ is equal or lower than $1$, we have
  \[
    \|\rho^*(\mathfrak{d}(\tmu))\|_{\infty} \leq \|\mathfrak{d}(\tmu)\|_{\infty} \leq D(\tmu).
  \]
  Note that the bounded cohomology of a topological space $X$ is isometrically isomorphic to the bounded cohomology of the fundamental group $\pi_1(X)$ \cite{Gr82}. %[p.257]
  Hence we have
  \[
    \| c(E) \|_{\infty} = \|\rho^*(\mathfrak{d}(\tmu))\|_{\infty} \leq D(\tmu).
  \]
  Let $\| \Sigma_h \|$ denote the simplicial volume of $\Sigma_h$.
  Then we have $|\langle c, [\Sigma_h] \rangle | \leq \| c \|_{\infty} \| \Sigma_h \|$ (\cite[Proposition 7.10]{Fr}).
  Finally we obtain the inequality
  \[
    |\langle c(E), [\Sigma_h] \rangle |
    \leq \| c \|_{\infty} \| \Sigma_h \|
    \leq D(\tmu) (4h-4).
  \]
\end{proof}

\begin{comment}
Let $G \to E \to \Sigma_h$ be a principal $G$-bundle.
Then the number $|\langle c(E), [\Sigma_h] \rangle |$ gives the obstruction to the existence of foliated structures, that is, if the number $| \langle c(E), [\Sigma_h] \rangle |$ is grater than $D(\tmu)(4h-4)$, the bundle $E$ does not admit foliated $G$-bundle structures.
By using this, we obtain the following theorem.
\end{comment}

\begin{theorem}\label{general nonflat}
  Let $G$ be a topological group and $\Sigma_h$ a closed surface of genus $h\geq 1$.
  Assume that there exist a homogeneous quasi-morphism $\mu \in Q(\tG)$ and $\gamma \in \pi_1(G)$ satisfying $\mu(\gamma) \neq 0$.
  Then, there exist infinitely many isomorphism classes of principal $G$-bundles over $\Sigma_h$ which do not admit foliated $G$-bundle structures.
\end{theorem}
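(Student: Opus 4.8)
The plan is to build, for each sufficiently large integer $N$, a principal $G$-bundle $E_N$ over $\Sigma_h$ whose relevant characteristic number is $N\,\mu(\gamma)$ up to sign, and then invoke the Milnor--Wood type inequality of Theorem \ref{MW inequality} to rule out foliated structures. First I would set $c = \mathfrak{d}(\mu) \in H_{\grp}^2(G;\RR)$. By Theorem \ref{thm:isom_thm_qm_bdd_ch_class} this class lies in $\mathrm{Im}(B\iota^*)\cap\mathrm{Im}(c_G)$, so Theorem \ref{MW inequality} applies to it with the quasi-morphism $\mu$; moreover, by Corollary \ref{cor:d_mu_obstruction_class} we have $c = -B\iota^*\big((\mu|_{\pi_1(G)})_*\mathfrak{o}\big)$, where $\mathfrak{o}\in H^2(BG;\pi_1(G))$ is the primary obstruction class of $G$-bundles. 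Note that $\mu|_{\pi_1(G)}\colon\pi_1(G)\to\RR$ is an honest homomorphism, since $\pi_1(G)$ is central in $\tG$ (see (\ref{abel tashizan})), and it is non-zero by hypothesis; in particular $\gamma$ has infinite order in $\pi_1(G)$. Consequently, for any foliated $G$-bundle $E\to\Sigma_h$ with classifying map $f\colon\Sigma_h\to BG^{\delta}$ and underlying principal bundle $\bar E$ (which is classified by $B\iota\circ f$), one obtains $c(E) = f^*c = -(\mu|_{\pi_1(G)})_*\big(\mathfrak{o}(\bar E)\big)\in H^2(\Sigma_h;\RR)$, where $\mathfrak{o}(\bar E)\in H^2(\Sigma_h;\pi_1(G))$ is the primary obstruction class of $\bar E$.

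Next I would construct the bundles $E_N$ by clutching. Fix a CW structure on $\Sigma_h$ with one $0$-cell, $2h$ $1$-cells and one $2$-cell; since $G$ is connected, every principal $G$-bundle over the $1$-skeleton $\Sigma_h^{(1)}$ is trivial, so a principal $G$-bundle over $\Sigma_h$ is obtained from the trivial bundle over the $2$-cell $D^2$ and the trivial bundle over $\Sigma_h^{(1)}$ by gluing along $\partial D^2$ via a clutching map $S^1\to G$, whose homotopy class in $\pi_1(G)$ can be prescribed arbitrarily. For each integer $N\geq 1$ let $E_N$ be a principal $G$-bundle whose clutching class is $N\gamma$, writing $\pi_1(G)$ additively. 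A direct obstruction-theoretic computation — identifying $\mathfrak{o}$ with the classical obstruction to a section via Remark \ref{rem:obs_section} — then gives $\langle\mathfrak{o}(E_N),[\Sigma_h]\rangle = \pm N\gamma\in\pi_1(G)$. Combining this with the previous paragraph, for any foliated $G$-bundle $E$ with $\bar E\cong E_N$ we get $|\langle c(E),[\Sigma_h]\rangle| = |\mu|_{\pi_1(G)}(\pm N\gamma)| = N\,|\mu(\gamma)|$.

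Finally I would assemble the pieces. If $E_N$ admitted a foliated $G$-bundle structure, then Theorem \ref{MW inequality} applied to $c=\mathfrak{d}(\mu)$ would force $N\,|\mu(\gamma)| \leq D(\mu)(4h-4)$; hence for every integer $N > D(\mu)(4h-4)/|\mu(\gamma)|$ — and, when $h=1$, for every $N\geq 1$ — the bundle $E_N$ admits no foliated $G$-bundle structure. Moreover these bundles are pairwise non-isomorphic: the quantity $|\langle(\mu|_{\pi_1(G)})_*\mathfrak{o}(E_N),[\Sigma_h]\rangle| = N\,|\mu(\gamma)|$ is an isomorphism invariant of $E_N$ and takes distinct values for distinct $N$. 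This produces infinitely many isomorphism classes of principal $G$-bundles over $\Sigma_h$ without foliated structure, as claimed. The only non-formal ingredient is the obstruction-theoretic identification $\langle\mathfrak{o}(E_N),[\Sigma_h]\rangle = \pm N\gamma$ for the clutched bundle; everything else is a routine combination of Theorem \ref{MW inequality} and Corollary \ref{cor:d_mu_obstruction_class}.
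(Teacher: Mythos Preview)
Your proposal is correct and follows essentially the same route as the paper: define $c=\mathfrak{d}(\mu)\in\mathrm{Im}(B\iota^*)\cap\mathrm{Im}(c_G)$, identify it with $-B\iota^*((\mu|_{\pi_1(G)})_*\mathfrak{o})$ via Corollary~\ref{cor:d_mu_obstruction_class}, build bundles $E_N$ over $\Sigma_h$ by clutching along a loop representing $N\gamma$, compute their characteristic number from the obstruction class, and then apply the Milnor--Wood type inequality (Theorem~\ref{MW inequality}) to exclude foliated structures for large $N$. The only cosmetic differences are that the paper first normalizes $\mu$ so that $\mu(\gamma)=1$ and phrases the clutching via a triangulation rather than a CW structure; you are also slightly more explicit about the $h=1$ case and about why the $E_N$ are pairwise non-isomorphic.
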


\begin{proof}
  We normalize the homogeneous quasi-morphism $\tmu$ as $\tmu(\gamma) = 1$ by a non-zero constant multiple.
  We set $c = \mathfrak{d}(\mu) = B\iota^*((\mu|_{\pi_1(G)})_*\mathfrak{o}) \in H^2(BG^\delta;\RR)$, then $c$ belongs to $\mathrm{Im}(c_G) \cap \mathrm{Im}(B\iota^*)$.
  Assume that a principal $G$-bundle $E \to \Sigma_h$ admits a foliated structure.
  Then, there exists a continuous map $f_\delta \colon \Sigma_h \to BG^\delta$ such that $f=B\iota \circ f_\delta$, where $f \colon \Sigma_h \to BG$ is the classifying map of $E$.
  Let $E_\delta$ be a foliated $G$-bundle on $\Sigma_h$ induced from $f^\delta$.
  Then,
  \[
  c(E_\delta) = f_\delta^\ast c = f_\delta^\ast(B\iota^\ast\mu_\ast \mathfrak{o}) = \mu_\ast (f_\delta^\ast B\iota^\ast\mathfrak{o}) = \mu_\ast (f^\ast\mathfrak{o}) = \mu_\ast\mathfrak{o}(E).
  \]
%  we have that $\langle \mu_*\mathfrak{o}(E), [\Sigma_h] \rangle = \langle c(E), [\Sigma_h] \rangle$.
%  Since {\kwsk the\relax}  classifying map $f \colon \Sigma_h \to BG$ factors through $BG^{\delta}$, we have
%  \[
%    \langle \mu_*\mathfrak{o}(E), [\Sigma_h] \rangle = \langle B\iota^*(\mu_*\mathfrak{o})(E), [\Sigma_h] \rangle = \langle c(E), [\Sigma_h] \rangle.
%  \]

  Hence we obtain that
  \begin{equation}\label{MW cor}
    \langle (\mu|_{\pi_1(G)})_*\mathfrak{o}(E), [\Sigma_h] \rangle = \langle c(E_\delta), [\Sigma_h] \rangle \leq D(\mu)(4h-4)
  \end{equation}
%  \[
%    \langle \mu_*\mathfrak{o}(E), [\Sigma_h] \rangle \leq D(\mu)(4h-4)
%  \]
  by Theorem \ref{MW inequality}.

  For each $n \in \ZZ$, we now construct a principal $G$-bundle $E_n$ over $\Sigma_h$ whose characteristic number $\langle (\mu|_{\pi_1(G)})_*\mathfrak{o}(E_n), [\Sigma_h] \rangle = n$.
  %Let $c \in \mathrm{Im}(c_G) \cap \mathrm{Im}(B\iota^*)$ be the element corresponding to the homogeneous quasi-morphism $\tmu$ under the map $\mathfrak{d}$.
  %{\kwsk via Corollary \ref{cor:isom_thm_tG_perfect_case}}.
  %via Theorem \ref{thm:isom_thm_qm_bdd_ch_class}?
  %It suffices to construct principal $G$-bundles $E_n$ over $\Sigma_h$ whose characteristic number $\langle c(E_n), [\Sigma_h] \rangle = n$ for any $n \in \ZZ$.
  Let us fix a triangulation $\mathcal{T}$ of $\Sigma_h$ and take a triangle $\Delta \in \mathcal{T}$.
  For $n \in \ZZ$, we take a loop $\{ g_t \}_{0 \leq t \leq 1}$ in $G$ which represents $\gamma^n \in \pi_1(G)$.
  Let $E \to \Sigma_h \setminus \Int(\Delta)$ and $E' \to \Delta$ be trivial $G$-bundles, where $\Int(\Delta)$ be the interior of $\Delta$.
  Then, we obtain a bundle $E_n$ by gluing the bundles $E$ and $E'$ along $\partial \Delta \approx S^1$ with the transition function $S^1 \to G ; t \to g_t$.
  Since the class $\mathfrak{o}(E_n)$ is the primary obstruction to the cross-sections (see Remark \ref{rem:obs_section}), we have $\langle \mathfrak{o}(E_n), [\Sigma_h] \rangle = \gamma^n$ and therefore we obtain
    %By the definition of the primary obstruction class $\mathfrak{o}$, we obtain
  \[
    \langle (\mu|_{\pi_1(G)})_*\mathfrak{o}(E_n), [\Sigma_h] \rangle = \mu(\gamma^n) = n.
  \]
  Hence, by equation (\ref{MW cor}), for a sufficiently large $n$, $E_n$ do not admit foliated $G$-bundle structures and we complete the proof.
\end{proof}

\begin{remark}
  Any non-trivial principal $G$-bundle over the $2$-sphere $\Sigma_0$ does not admit foliated structures since the fundamental group of $\Sigma_0$ is trivial.
  Hence, if the order of $\pi_1(G)$ is infinite, there exist infinitely many isomorphism classes of principal $G$-bundles over $\Sigma_0$ which do not admit foliated structures.
\end{remark}

\begin{proof}[Proof of Corollary $\ref{nonflat}$]
  Ostrover's Calabi quasi-morphism satisfies the assumption in Theorem \ref{general nonflat}.
  Hence Theorem \ref{general nonflat} implies the corollary.
\end{proof}

\begin{comment}
\begin{remark}
  Let $G \to E \to \Sigma_h$ be a principal $G$-bundle.
  Then the number $|\langle c(E), [\Sigma_h] \rangle |$ gives the obstruction to the flat structure, that is, if the number $| \langle c(E), [\Sigma_h] \rangle |$ is grater than $D(\tmu)(4h-4)$, the bundle $E$ does not admit the principal flat $G$-bundle structure.
  %However the authors do not know examples that the bundle $E$ with $\chi_{\tmu}(E) > D(\tmu)(4h-4)$.
  When $G = \Ham(S^2 \times S^2, \omega_{\lambda})$ and $c = B\iota^*(\phi_*\mathfrak{o}_{S^2 \times S^2})_{\RR}$ as in Corollary \ref{cor bounded and unbounded}, there exists a principal $G$-bundle $E$ over $\Sigma_h$ such that the inequality
  \[
    |\langle c(E), [\Sigma_h] \rangle | > D(\mu^{\lambda}) (4h-4)
  \]
  holds\footnote{We can construct such a bundle $E$ as follows. Let us fix a triangulation $\mathcal{T}$ of $\Sigma_h$ and take a triangle $\Delta \in \mathcal{T}$.
  For a posintive integer $n > D(\mu^{\lambda}) (4h-4)$, we take a loop $\{ g_t \}_{0 \leq t \leq 1}$ in $\Ham(S^2 \times S^2, \omega_\lambda)$ satisfying $\phi([g_t]) = n$.
  Let $E_1 \to \Sigma_h \setminus \Int(\Delta)$ and $E_2 \to \Delta$ be trivial $G$-bundle, where $\Int(\Delta)$ be the interior of $\Delta$.
  We obtain the bundle $E$ by glueing the bundles $E_1$ and $E_2$ along $\partial \Delta \approx S^1$ with the transition function $S^1 \to G ; t \to g_t$.}.
  Therefore, such a bundle $E$ does not admit a flat structure.
\end{remark}
\end{comment}

\begin{remark}\label{large cn}
As an application of Theorem \ref{MW inequality}, one can prove Corollary \ref{cor bounded and unbounded} (2) by constructing explicitly a foliated $H$-bundle with arbitrary large characteristic number.
  Indeed, for any $N \in \ZZ = \pi_1(H)$, there exist $2k$ elements $\tg_1, \dots, \tg_{2k}$ of $\widetilde{H}$ such that the equality $N = [\tg_1, \tg_2] \dots [\tg_{2k-1}, \tg_{2k}]$ since $\widetilde{H}$ is uniformly perfect.
  Note that the number $k$ does not depend on $N$.
  We set $g_j = p(\tg_j) \in H$ for any $j$, where $p \colon \widetilde{H} \to H$ is the universal covering.
  Let $\Sigma_k$ be a closed surface of genus $k$ and $a_j \in \pi_1(\Sigma_k)$ the canonical generator with the relation
  \[
    [a_1, a_2] \dots [a_{2k - 1}, a_{2k}] = 1.
  \]
  Let $\varphi \colon \pi_1(\Sigma_k) \to \Cont_0(S^3, \xi)$ be a homomorphism defined by $\varphi(a_j) = g_j$ for any $j$.
  Then, the characteristic number of the foliated $H$-bundle with the holonomy homomorphism $\varphi$ is equal to $N$ (this computation of the characteristic number is known as Milnor's algorithm \cite{Mi58}).
\end{remark}

\begin{comment}
Then, by Theorem \ref{thm:isom_thm_qm_bdd_ch_class}, there is a homogeneous quasi-morphism $\tmu \in Q(\tG)$ such that $\iota^{\pi}(\tmu) = c$.

Then the fundamental group $\pi_1(\Sigma_g)$ has the presentation
\[
  \pi_1(\Sigma_g) = \langle a_1, \dots, a_{2g} \mid [a_1, a_2]\cdots [a_{2g-1}, a_{2g}] \rangle,
\]
where $[a, b] = aba^{-1}b^{-1}$.
For a homomorphism $\psi \colon \pi_1(\Sigma_g) \to G$, let $E_\psi$ denote the flat $G$-bundle with the holonomy homomorphism $\psi$.
For this bundle $E_{\psi}$, we obtain the following equality, which is inspired by \cite[Theorem A]{T}.
Recall that $\pi \colon \tG \to G$ is the universal covering.
\end{comment}

\section{Non-extendability of homomorphisms on $\pi_1(G)$ to homogeneous quasi-morphisms on $\tG$}\label{nonex sec}

In Section \ref{sec boundedness of ch class}, we use the homogeneous quasi-morphisms on the universal covering $\tG$ to show the (un)boundedness of characteristic classes.
In this section, on the contrary, we use the (un)boundedness of characteristic classes to study the extension problem of homomorphism on $\pi_1(G)$ to $\tG$.
The extension problem of homomorphisms and homogeneous quasi-morphisms have been studied by some researchers (for example, see \cite{I}, \cite{Sht}, \cite{KK}, \cite{KKMM20}, \cite{KKMM21}, \cite{MR4330215}).

Let $T = S^1 \times S^1$ be the two-dimensional torus and $\Homeo_0(T)$ the identity component of the homeomorphism group of $T$ with respect to the compact-open topology.
In \cite{Hams}, it was shown that the fundamental group $\pi_1(\Homeo_0(T))$ is isomorphic to $\ZZ \times \ZZ$.

\begin{corollary}
  Any non-trivial homomorphism in $\Hom(\pi_1(\Homeo_0(T)),\RR)$ cannot be extended to $\widetilde{\Homeo}_0(T)$ as a homogeneous quasi-morphism.
\end{corollary}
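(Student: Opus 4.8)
The plan is to argue by contradiction, feeding a hypothetical extension into Corollary~\ref{cor:bounded_unbounded_qm}(1) and then invoking the theorem of \cite{MR} that every non-zero cohomology class of $\Homeo_0(T)$ is unbounded. Write $G=\Homeo_0(T)$ and $\tG=\widetilde{\Homeo}_0(T)$; recall $\pi_1(G)\cong\ZZ^2$ by \cite{Hams}, and let $\mathfrak{o}\in H^2(BG;\pi_1(G))$ be the primary obstruction class of $G$-bundles. Suppose some non-trivial $f\in\Hom(\pi_1(G),\RR)$ extends to a homogeneous quasi-morphism $\mu\in Q(\tG)$, so that $\mu|_{\pi_1(G)}=f$. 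First I would note that $\mu$ cannot descend to $G$: if $\mu=\pi^*\nu$ for some $\nu\in Q(G)$, then $\mu$ vanishes on $\ke(\pi)=\pi_1(G)$, contradicting $f\neq 0$. Hence Corollary~\ref{cor:bounded_unbounded_qm}(1) applies and shows that
\[
  c:=B\iota^*\bigl(f_*\mathfrak{o}\bigr)\in H_{\grp}^2(G;\RR)
\]
is bounded; equivalently, by Corollary~\ref{cor:d_mu_obstruction_class}, $c=-\mathfrak{d}(\mu)$ lies in $\mathrm{Im}(c_G)$.

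The crux is to show $c\neq 0$; granting this, the boundedness of $c$ contradicts \cite{MR} and the corollary follows. By Remark~\ref{remark:derivation_is_isom} we already have $f_*\mathfrak{o}\neq 0$ in $H^2(BG;\RR)$, so it remains to check that $B\iota^*$ does not annihilate it. For this I would use the coordinate-wise inclusion $\Homeo_+(S^1)\times\Homeo_+(S^1)\hookrightarrow\Homeo_0(T)$ together with a cocompact Fuchsian representation $\rho\colon\pi_1(\Sigma_h)\to\Homeo_+(S^1)$, whose Euler number $n=\pm(2h-2)$ is non-zero. Writing $f=c_1 p_1+c_2 p_2$ with $p_j\colon\ZZ^2\to\RR$ the coordinate projections and, say, $c_1\neq 0$, consider the foliated $G$-bundle over $\Sigma_h$ whose holonomy is $\rho$ acting on the first factor and trivially on the second. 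By naturality of the primary obstruction class, together with the classical fact that the $\Homeo_+(S^1)$-obstruction class is the Euler class and is detected on Fuchsian bundles in the Milnor--Wood range, this bundle satisfies $\langle B\iota^*((p_1)_*\mathfrak{o}),[\Sigma_h]\rangle=n$ and $\langle B\iota^*((p_2)_*\mathfrak{o}),[\Sigma_h]\rangle=0$, whence
\[
  \bigl\langle c,[\Sigma_h]\bigr\rangle=c_1 n\neq 0,
\]
so $c\neq 0$. (The same computation in fact shows that $B\iota^*\colon H^2(BG;\RR)\to H_{\grp}^2(G;\RR)$ is injective, equivalently $H_{\grp}^1(\tG;\RR)=0$ by Corollary~\ref{cor:Biota_injective}; with this in hand one may instead run the argument through Corollary~\ref{cor main}, which gives $\mathfrak{d}(\mu)\neq 0$ directly from the fact that $\mu$ does not descend.)

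Putting the two steps together yields the contradiction: $c$ is simultaneously bounded and non-zero. Therefore no non-trivial homomorphism in $\Hom(\pi_1(\Homeo_0(T)),\RR)$ can be extended to $\tG$ as a homogeneous quasi-morphism. The main obstacle is precisely the non-vanishing of $c$, i.e.\ the injectivity of $B\iota^*$ on $H^2(BG;\RR)$ (equivalently $H_{\grp}^1(\tG;\RR)=0$, equivalently the finiteness of $\tG^{\mathrm{ab}}$): this is where the specific geometry of $T$ must enter, via the two coordinate circle actions and the Milnor--Wood detection of the Euler class on higher-genus surface bundles; everything else is a formal application of the machinery of Section~\ref{section top int} together with the input from \cite{MR}.
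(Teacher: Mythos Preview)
Your argument is correct and arrives at the same contradiction with \cite{MR}, but the key step differs from the paper's. The paper invokes the perfectness of $\widetilde{\Homeo}_0(T)$ from \cite{KR}; this gives $H_{\grp}^1(\tG;\RR)=0$, so Corollary~\ref{cor main} applies directly and yields $Q(\tG)/\pi^*Q(G)\cong\mathrm{Im}(c_G)\cap\mathrm{Im}(B\iota^*)=0$ by \cite{MR}, whence every $\mu\in Q(\tG)$ descends and therefore vanishes on $\pi_1(G)$. You instead bypass \cite{KR}: rather than establishing $H_{\grp}^1(\tG;\RR)=0$ abstractly, you exhibit a concrete foliated $G$-bundle (a Fuchsian $\Homeo_+(S^1)$-bundle pushed through the coordinate inclusion) on which $c=B\iota^*(f_*\mathfrak{o})$ evaluates non-trivially. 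Your route is more self-contained, trading a citation for an explicit Euler-class computation; the paper's is shorter but leans on the external perfectness input. One minor correction to your parenthetical: Corollary~\ref{cor:Biota_injective} only gives the implication $H_{\grp}^1(\tG;\RR)=0\Rightarrow B\iota^*$ injective; to get the converse from the five-term exact sequence you also need $H_{\grp}^1(G;\RR)=0$, i.e.\ that $\Homeo_0(T)$ itself is perfect (which is true, but not what that corollary says). This does not affect your main argument, which only requires $c\neq 0$ for the given $f$.
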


\begin{proof}
  It is enough to show that the equality
  \[
    Q(\widetilde{\Homeo}_0(T)) = \pi^*Q(\Homeo_0(T))
  \]
  holds, where $\pi \colon \widetilde{\Homeo}_0(T) \to \Homeo_0(T)$ is the universal covering.
  Because the universal covering $\widetilde{\Homeo}_0(T)$ is perfect \cite{KR}, we have
  \[
    Q(\widetilde{\Homeo}_0(T))/\pi^*Q(\Homeo_0(T)) = \mathrm{Im}(c_G) \cap \mathrm{Im}(B\iota^*)
  \]
  by Corollary \ref{cor main}.
 Because any non-zero classes in $\mathrm{Im}(B\iota^*)$ are unbounded \cite{MR}, we have $Q(\widetilde{\Homeo}_0(T))/\pi^*Q(\Homeo_0(T)) = 0$, and the corollary holds.
\end{proof}

%%%%%%%%%%%%%%%%%%%%%%%%%%%%%%%%%%%%%%%%%%%%%%%%%%%%%%%%%%%%%%%%%%%%%%%%%%%
%%%%%%%%%%%%%%%%%%%%%%%%%%%%%%%%%%%%%%%%%%%%%%%%%%%%%%%%%%%%%%%%%%%%%%%%%%%
%%%%%%%%%%%%%%%%%%%%%%%%%%%%%%%%%%%%%%%%%%%%%%%%%%%%%%%%%%%%%%%%%%%%%%%%%%%
%%%%%%%%%%%%%%%%%%%%%%%%%%%%%%%%%%%%%%%%%%%%%%%%%%%%%%%%%%%%%%%%%%%%%%%%%%%
%%%%%%%%%%%%%%%%%%%%%%%%%%%%%%%%%%%%%%%%%%%%%%%%%%%%%%%%%%%%%%%%%%%%%%%%%%%
\section*{Acknowledgments}

The authors would like to thank Mitsuaki Kimura, Kevin Li, Yoshifumi Matsuda, Takahiro Matsushita, Masato Mimura, Yoshihiko Mitsumatsu and Kaoru Ono for some comments.

The first author is supported in part by JSPS KAKENHI Grant Number JP18J00765 and 21K13790.
The second author is supported by JSPS KAKENHI Grant Number JP21J11199.
%%%%%%%%%%%%%%%%%%%%%%%%%%%%%%%%%%%%%%%%%%%%%%%%%%%%%%%%%%%%%%%%%%%%%%%%%%%
%%%%%%%%%%%%%%%%%%%%%%%%%%%%%%%%%%%%%%%%%%%%%%%%%%%%%%%%%%%%%%%%%%%%%%%%%%%
%%%%%%%%%%%%%%%%%%%%%%%%%%%%%%%%%%%%%%%%%%%%%%%%%%%%%%%%%%%%%%%%%%%%%%%%%%%
%%%%%%%%%%%%%%%%%%%%%%%%%%%%%%%%%%%%%%%%%%%%%%%%%%%%%%%%%%%%%%%%%%%%%%%%%%%
%%%%%%%%%%%%%%%%%%%%%%%%%%%%%%%%%%%%%%%%%%%%%%%%%%%%%%%%%%%%%%%%%%%%%%%%%%%

\appendix

\section{Examples of (contact) Hamiltonian fibrations}\label{fibration sec}

Recall that $G_{\lambda} = \Ham(S^2 \times S^2, \omega_{\lambda})$, $H = \Cont_0(S^3,\xi)$, and the cohomology classes
\[
  \mathfrak{o}_{G_{\lambda}} \in H^2(BG_{\lambda};\ZZ) \  \text{ and } \  \mathfrak{o}_{H} \in H^2(BH; \ZZ)
\]
are the primary obstruction classes.
Our main concern in this paper (e.g., Corollary \ref{cor bounded and unbounded}) was the classes $B\iota^*(\mathfrak{o}_{G_{\lambda}})_{\RR} \in H^2(BG_{\lambda}^{\delta};\RR)$ and $B\iota^*(\mathfrak{o}_{H})_{\RR} \in H^2(BH^{\delta}; \RR)$.
In this appendix, we rather use the classes $\mathfrak{o}_{G_{\lambda}}$ and $\mathfrak{o}_{H}$ to study (not necessarily foliated) Hamiltonian fibrations and contact Hamiltonian fibrations.

We bigin with the following genaral proposition.

\begin{comment}
Recall that $G_{\lambda} = \Ham(S^2 \times S^2, \omega_{\lambda})$ and $H = \Cont_0(S^3,\xi)$.
In this appendix, we apply the obstruction classes
\[
  \mathfrak{o}_{G_{\lambda}} \in H^2(BG_{\lambda};\ZZ)
\]
and
\[
  \mathfrak{o}_{H} \in H^2(BH; \ZZ)
\]
to construct non-trivial (contact) Hamiltonian fibrations.
\end{comment}

\begin{comment}
In this appendix, we apply the obstruction classes
\[
  \phi_*\mathfrak{o}_{S^2 \times S^2} \in H^2(B\Ham(S^2 \times S^2, \omega_{\lambda});\ZZ)
\]
and
\[
  \mathfrak{o}_{S^3} \in H^2(B\Cont_0(S^3,\xi); \ZZ)
\]
to construct non-trivial (contact) Hamiltonian fibrations.
\end{comment}

%{\kwsk In this section, let $\Sigma_h$ denote the closed orientable surface with genus $h$.}
\begin{proposition}\label{prop G,K bundle difference}
  Let $G$ and $K$ be topological groups and $i \colon G \to K$ be a continuous homomorphism.
  Assume that the universal covering $\tG$ is perfect.
  If there exists a non-trivial element $\tg$ of $\pi_1(G)$ satisfying $i_* (\tg) = 0 \in \pi_1(K)$, then there exists a non-trivial principal $G$-bundle $E$ over $\Sigma_h$ such that the bundle $E$ is trivial as a principal $K$-bundle.
\end{proposition}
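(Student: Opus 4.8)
The plan is to realize $E$ as a foliated (in particular, principal) $G$-bundle over a closed surface of suitable genus whose primary obstruction class is Poincar\'e dual to $\tg$; the perfectness of $\tG$ is exactly what makes such a flat realization possible. First I would use perfectness to write $\tg=\prod_{j=1}^{k}[\tilde a_j,\tilde b_j]$ with $\tilde a_j,\tilde b_j\in\tG$, and set $a_j=\pi(\tilde a_j)$, $b_j=\pi(\tilde b_j)\in G$. Since $\tg\in\pi_1(G)=\ker\pi$, we have $\prod_{j}[a_j,b_j]=\pi(\tg)=1_G$, so there is a well-defined holonomy homomorphism $\rho\colon\pi_1(\Sigma_k)\to G$ sending the standard generators $\alpha_j,\beta_j$ to $a_j,b_j$; let $E$ be the corresponding foliated $G$-bundle over $\Sigma_k$. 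For the surface $\Sigma_h$ of the statement with $h\geq k$ one pulls $E$ back along a degree-one map $\Sigma_h\to\Sigma_k$, which preserves flatness, non-triviality, and the evaluation of the obstruction class against the fundamental class; the remaining values $1\leq h<k$ are covered by the clutching construction used in the proof of Theorem \ref{general nonflat}, which produces a (not necessarily foliated) principal $G$-bundle over $\Sigma_h$ with the same obstruction class.

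Next I would compute the primary obstruction class of $E$. By Milnor's algorithm for the characteristic number of a flat bundle --- which is the spectral-sequence description of Definition \ref{def:obs_class_via_s.s.} together with naturality of the Serre spectral sequence applied to the classifying map $B\rho$, cf. Remark \ref{large cn} --- one has
\[
  \langle \mathfrak{o}(E),[\Sigma_k]\rangle=\prod_{j}[\tilde a_j,\tilde b_j]=\tg\in\pi_1(G).
\]
Since $\tg\neq 0$ and the pairing with $[\Sigma_k]$ identifies $H^2(\Sigma_k;\pi_1(G))$ with $\pi_1(G)$, the class $\mathfrak{o}(E)$ is non-zero; as the primary obstruction class vanishes for any bundle admitting a section (Remark \ref{rem:obs_section}), the bundle $E$ is non-trivial as a principal $G$-bundle.

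Finally I would show that the associated principal $K$-bundle $E\times_i K$, obtained by extending the structure group along $i$, is trivial. By naturality of the primary obstruction class under change of structure group, $\mathfrak{o}(E\times_i K)$ is the image of $\mathfrak{o}(E)$ under the coefficient homomorphism $H^2(\Sigma_k;\pi_1(G))\to H^2(\Sigma_k;\pi_1(K))$ induced by $i_*\colon\pi_1(G)\to\pi_1(K)$; equivalently, $E\times_i K$ is the flat $K$-bundle with holonomy $i\circ\rho$, and lifting its relation along the lift $\tilde i\colon\tG\to\tilde K$ gives $\langle\mathfrak{o}(E\times_i K),[\Sigma_k]\rangle=i_*(\tg)=0$. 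Hence $\mathfrak{o}(E\times_i K)=0$ in $H^2(\Sigma_k;\pi_1(K))$. Since $\Sigma_k$ is a $2$-dimensional CW complex, all higher obstructions to a section of a principal $K$-bundle over it lie in $H^{q}(\Sigma_k;\pi_{q-1}(K))=0$ for $q\geq 3$, so the vanishing of the primary obstruction class forces $E\times_i K$ to be trivial, which completes the proof. The main obstacle I expect is making precise the two classical ingredients --- the identification $\langle\mathfrak{o}(E),[\Sigma_k]\rangle=\tg$ (Milnor's algorithm) and the naturality $\mathfrak{o}(E\times_i K)=i_*\mathfrak{o}(E)$ under $i$ --- both of which should be deduced carefully from Definition \ref{def:obs_class_via_s.s.} and Proposition \ref{prop:transgression_obstruction_homom}; everything else is formal obstruction theory over a $2$-complex.
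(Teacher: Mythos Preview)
Your proposal is correct and follows essentially the same route as the paper: write $\tg$ as a product of commutators in $\tG$ using perfectness, project to $G$ to obtain a holonomy representation $\rho\colon\pi_1(\Sigma_k)\to G$, and take $E$ to be the associated foliated bundle; the non-triviality comes from Milnor's algorithm giving $\langle\mathfrak{o}(E),[\Sigma_k]\rangle=\tg\neq 0$.

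The only substantive difference is in the triviality argument for the associated $K$-bundle. You compute $\mathfrak{o}(E\times_i K)=i_*\mathfrak{o}(E)=0$ and then invoke obstruction theory over a $2$-complex. The paper instead lifts the holonomy: since $i_*(\tg)=0$, the assignment $a_j\mapsto i_*(\tg_j)\in\widetilde{K}$ defines a homomorphism $\widetilde{i\circ\rho}\colon\pi_1(\Sigma_k)\to\widetilde{K}$ lifting $i\circ\rho$, so the classifying map $\Sigma_k\to BK$ factors through $B\widetilde{K}$, which is $2$-connected and hence receives only null-homotopic maps from a surface. The two arguments are equivalent in content (both encode $\pi_2(BK)\cong\pi_1(K)$ and $i_*(\tg)=0$), but the paper's phrasing avoids having to justify the naturality $\mathfrak{o}(E\times_i K)=i_*\mathfrak{o}(E)$ and the classification of $K$-bundles over a $2$-complex.

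One remark: your detour through pull-backs along degree-one maps and the clutching construction for $h<k$ is unnecessary. In the paper the genus $h$ is not prescribed in advance; it is produced by the proof (as confirmed by the application in Proposition~\ref{ham s2s2}, which concludes ``there exists a positive integer $h_0$''). Indeed, had the statement demanded \emph{every} $h\geq 1$, your clutching argument would already prove it without ever using perfectness of $\tG$, making that hypothesis superfluous---which is a signal that this is not the intended reading.
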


\begin{proof}
  By the perfectness of $\tG$, we can take $\tg_j \in \tG$ ($j=1,\ldots,2h$) such that $\tg = [\tg_1, \tg_2]\cdots [\tg_{2h-1}, \tg_{2h}]$.
  Let us define a homomorphism $\rho \colon \pi_1(\Sigma_h) \to G$ by setting
  \begin{align}\label{def psi}
    \rho(a_j) = \pi(\tg_j)
  \end{align}
  for any $j$, where $\pi \colon \tG \to G$ is the universal covering.
  Then the principal $G$-bundle $G \to E_{\rho} \to \Sigma_h$ associated to the holonomy homomorphism $\rho$ is non-trivial (see \cite{Mi58}).
  We show the principal $K$-bundle $E_{i \circ \rho}$ is trivial.
  By the assumption of $\tg$, we have
  \[
    0 = i_* (\tg) = [i_*(\tg_1), i_*(\tg_2)]\cdots [i_*(\tg_{2h-1}), i_*(\tg_{2h})].
  \]
  Let us define $\widetilde{i \circ \rho} \colon \pi_1(\Sigma_h) \to \widetilde{K}$ by
  \[
    \widetilde{i \circ \rho} (a_j) = i_*(\tg_j),
  \]
  then this map $\widetilde{i \circ \rho}$ is a homomorphism satisfying $\pi \circ (\widetilde{i \circ \rho}) = i \circ \rho$, where $\pi \colon \widetilde{K} \to K$ is the universal covering.
  Thus the classifying map $B(i \circ \rho) \colon B\pi_1(\Sigma_h) \to BG \to BK$ factors into
  \[
    B(i \circ \rho) = B\pi \circ B(\widetilde{i \circ \rho}) \colon \Sigma_h \simeq B\pi_1(\Sigma_h) \to B\widetilde{K} \to BK.
  \]
  Since the fundamental group and second homotopy group of the classifying space $B\widetilde{K}$ are trivial, the map
  \[
    B(\widetilde{i \circ \rho}) \colon \Sigma_h \to B\widetilde{K}
  \]
  is null-homotopic and so is the classifying map $B(i \circ \rho)$ of the bundle $E_{i \circ \rho}$.
  Thus the bundle $E_{i \circ \rho}$ is a trivial bundle.
\end{proof}

\subsection{Contact Hamiltonian fibrations}
Let $M$ be a manifold with a contact structure $\xi$.
Let $\Cont_0(M,\xi)$ be a contact Hamiltonian diffeomorphism group, that is, the identity component of the group
\[
  \Cont(M,\xi) = \{ g \in \mathrm{Diff}(M) \mid g^*\xi = \xi \}
\]
with the $C^{\infty}$-topology.
A fiber bundle $M \to E \to B$ is called a {\it contact Hamiltonian fibration} if the structure group is reduced to the contact Hamiltonian diffeomorphism group.

The orientation preserving diffeomorphism group $\mathrm{Diff}_+(S^3)$ of the $3$-sphere is homotopy equivalent to $SO(4)$ (\cite{H}).
Hence the fundamental group $\pi_1(\mathrm{Diff}_+(S^3))$ is isomorphic to $\ZZ/2\ZZ$.
Let $\xi$ be the standard contact structure on the $3$-sphere.
The fundamental group of $\Cont_0(S^3,\xi)$ is isomorphic to $\ZZ$ (\cite{El}, \cite{CS}).
Let $i \colon \Cont_0(S^3,\xi) \hookrightarrow \mathrm{Diff}_+(S^3)$ be the inclusion, then the induced map
\[
  i_* \colon \pi_1(\Cont_0(S^3,\xi)) \cong \ZZ \to \pi_1(\mathrm{Diff}_+(S^3)) \cong \ZZ/2\ZZ
\]
is surjective (\cite{CS}).
Let $\tg \in \pi_1(\Cont_0(S^3,\xi))$ be a non-zero even number in  $\ZZ \cong \pi_1(\Cont_0(S^3,\xi))$,
then we have $i_* (\tg) = 0 \in \pi_1(\mathrm{Diff}_+(S^3))\cong \ZZ/2\ZZ$.
By the perfectness of $\widetilde{\Cont}_0(S^3,\xi)$ (\cite{Ry}) and  Proposition \ref{prop G,K bundle difference}, there is a non-trivial principal $\Cont_0(S^3,\xi)$-bundle over a closed surface that is trivial as a principal $\mathrm{Diff}_{+}(S^3)$-bundle.
In other words, there is a sphere bundle that is non-trivial as a contact Hamiltonian fibration but trivial as an oriented sphere bundle.

For a contact Hamiltonian fibration $S^3 \to E \to \Sigma_h$, let $\mathfrak{o}(E) \in H^2(\Sigma_h;\ZZ)$ be the obstruction class.
Let $\chi(E) \in \ZZ$ denote the characteristic number
\[
  \chi(E) = \langle \mathfrak{o}(E), [\Sigma_h] \rangle.
\]
\begin{proposition}\label{s3 bdl odd even}
  Let $S^3 \to E \to \Sigma_h$ be a foliated contact Hamiltonian fibration.
  If the characteristic number $\chi(E) \in \ZZ$ is even, the bundle $E$ is trivial as an oriented sphere bundle.
  If $\chi(E)$ is odd, the bundle $E$ is non-trivial as an oriented sphere bundle.
\end{proposition}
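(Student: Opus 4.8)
The plan is to compare the obstruction class $\mathfrak{o}(E)\in H^2(\Sigma_h;\ZZ)$ of $E$, viewed as a $\Cont_0(S^3,\xi)$-bundle, with the corresponding obstruction class of $E$ viewed as an oriented sphere bundle, and to show that the latter remembers only $\chi(E)$ modulo $2$.

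I would begin with two observations. First, as recalled above, the inclusion $i\colon\Cont_0(S^3,\xi)\hookrightarrow\mathrm{Diff}_+(S^3)$ induces a surjection $i_\ast\colon\pi_1(\Cont_0(S^3,\xi))\cong\ZZ\to\pi_1(\mathrm{Diff}_+(S^3))\cong\ZZ/2\ZZ$, and the only surjection $\ZZ\to\ZZ/2\ZZ$ is reduction modulo $2$; hence $i_\ast$ is reduction modulo $2$. Second, ``$E$ is trivial as an oriented sphere bundle'' means that $E$, with structure group enlarged to $\mathrm{Diff}_+(S^3)$ along $i$ (write $i_\ast E$ for the resulting oriented sphere bundle), is trivial. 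Since $\mathrm{Diff}_+(S^3)\simeq SO(4)$ (\cite{H}) is connected and $\Sigma_h$ is a two-dimensional CW complex, the only obstruction to a trivialization of $i_\ast E$ is the primary one, a class $\mathfrak{o}(i_\ast E)\in H^2(\Sigma_h;\pi_1(\mathrm{Diff}_+(S^3)))\cong H^2(\Sigma_h;\ZZ/2\ZZ)\cong\ZZ/2\ZZ$; thus $i_\ast E$ is trivial if and only if $\mathfrak{o}(i_\ast E)=0$.

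The crux is to compute $\mathfrak{o}(i_\ast E)$ by Milnor's algorithm \cite{Mi58}, in the spirit of the proof of Proposition \ref{prop G,K bundle difference}. Let $\rho\colon\pi_1(\Sigma_h)\to\Cont_0(S^3,\xi)$ be a holonomy homomorphism of the foliated bundle $E$, and let $a_1,\dots,a_{2h}$ be the standard generators of $\pi_1(\Sigma_h)$ with $[a_1,a_2]\cdots[a_{2h-1},a_{2h}]=1$. Choosing lifts $\tg_j\in\widetilde{\Cont}_0(S^3,\xi)$ of $\rho(a_j)$, the element $\prod_{j=1}^{h}[\tg_{2j-1},\tg_{2j}]$ lies in $\pi_1(\Cont_0(S^3,\xi))$ and, under the identification $\pi_1(\Cont_0(S^3,\xi))\cong\ZZ$, equals $\chi(E)$ up to sign. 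The bundle $i_\ast E$ has holonomy $i\circ\rho$, and, writing $i_\ast\colon\widetilde{\Cont}_0(S^3,\xi)\to\widetilde{\mathrm{Diff}}_+(S^3)$ also for the lift of $i$ to universal coverings, the elements $i_\ast\tg_j$ are lifts of $(i\circ\rho)(a_j)$. Hence Milnor's algorithm applied to $i_\ast E$ gives
\[
  \langle\mathfrak{o}(i_\ast E),[\Sigma_h]\rangle=\prod_{j=1}^{h}[i_\ast\tg_{2j-1},i_\ast\tg_{2j}]=i_\ast\!\left(\prod_{j=1}^{h}[\tg_{2j-1},\tg_{2j}]\right)
\]
in $\pi_1(\mathrm{Diff}_+(S^3))\cong\ZZ/2\ZZ$, and, since $i_\ast\colon\ZZ\to\ZZ/2\ZZ$ is reduction modulo $2$, the right-hand side is $\chi(E)\bmod 2$ (the sign ambiguity being irrelevant in $\ZZ/2\ZZ$). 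Equivalently, one may invoke the naturality of the primary obstruction class under group homomorphisms, which gives $\mathfrak{o}(i_\ast E)=(i_\ast)_\ast\mathfrak{o}(E)$ for the coefficient homomorphism $(i_\ast)_\ast\colon H^2(\Sigma_h;\ZZ)\to H^2(\Sigma_h;\ZZ/2\ZZ)$ induced by $i_\ast$.

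It then follows that if $\chi(E)$ is even, $\mathfrak{o}(i_\ast E)=0$ in $H^2(\Sigma_h;\ZZ/2\ZZ)\cong\ZZ/2\ZZ$, so $i_\ast E$, and hence $E$, is trivial as an oriented sphere bundle; while if $\chi(E)$ is odd, $\mathfrak{o}(i_\ast E)$ is the non-zero element of $H^2(\Sigma_h;\ZZ/2\ZZ)$, so $E$ is non-trivial as an oriented sphere bundle. The only non-formal step is the mod-$2$ computation of $\langle\mathfrak{o}(i_\ast E),[\Sigma_h]\rangle$; the remaining ingredients are routine obstruction theory, the equivalence $\mathrm{Diff}_+(S^3)\simeq SO(4)$, and the description of $i_\ast$ on fundamental groups recalled above.
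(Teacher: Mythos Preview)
Your proof is correct and follows essentially the same approach as the paper: both compute the primary obstruction via Milnor's algorithm on lifts $\tg_j$ of the holonomy generators, identify $\chi(E)$ with the commutator product in $\pi_1(\Cont_0(S^3,\xi))\cong\ZZ$, and then push forward along $i_\ast$ to $\pi_1(\mathrm{Diff}_+(S^3))\cong\ZZ/2\ZZ$ to decide triviality. Your write-up is in fact a bit more explicit than the paper's (which defers the even case to the argument of Proposition~\ref{prop G,K bundle difference} and is terse in the odd case), and your alternative via naturality of the obstruction class under the coefficient change $(i_\ast)_\ast$ is a clean way to package the same computation.
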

\begin{proof}

  Let $\rho \colon \pi_1(\Sigma_h) \to \Cont_0(S^3, \xi)$ be a holonomy homomorphism of $E$.
  Set $g_j = \psi(a_j)$ and take lifts $\tg_j \in \widetilde{\Cont}_0(S^3,\xi)$ of $g_j$'s, where $a_j \in \pi_1(\Sigma_h)$ are the generators.
  Set $\tg = [\tg_1, \tg_2]\cdots [\tg_{2h-1}, \tg_{2h}]$.
  Then, by the algorithm (\cite{Mi58}) for computing the characteristic number of foliated bundles, we have
  \[
    \chi(E) = [\tg_1, \tg_2]\cdots [\tg_{2g-1}, \tg_{2g}]
    = \tg \in \ZZ \cong \pi_1(\Cont_0(S^3, \xi)).
  \]
  If $\chi(E)$ is even, we have $i_*(\tg) = 0 \in \pi_1(\mathrm{Diff}_+(S^3)) \cong \ZZ/2\ZZ$.
  Thus, the bundle $E$ is trivial as an oriented sphere bundle by the same arguments in Proposition \ref{prop G,K bundle difference}.
  If $\chi(E)$ is odd, we have $i_*(\tg) = 1 \in \pi_1(\mathrm{Diff}_+(S^3)) \cong \ZZ/2\ZZ$.
  Since the characteristic number of $E$ is non-zero, the bundle $E$ is non-trivial as an oriented sphere bundle.
\end{proof}

\subsection{Hamiltonian fibrations}
Let $M$ be a manifold with a symplectic form $\omega$.
A fiber bundle $M \to E \to B$ is called a {\it Hamiltonian fibration} if the structure group is reduced to the Hamiltonian diffeomorphism group $\Ham(M,\omega)$.

Let us consider the $4$-manifold $S^2 \times S^2$.
\begin{comment}
If the symplectic form on $S^2 \times S^2$ is $\omega_1 =\mathrm{pr}_1^\ast\omega_0+\mathrm{pr}_2^\ast\omega_0$, the Hamiltonian diffeomorphism group is homotopy equivalent to $SO(3)\times SO(3)$ (\cite{Gr85}).
So the group $\pi_1(\Ham(S^2 \times S^2,\omega_1))$ is isomorphic to $\ZZ/2\ZZ \times \ZZ/2\ZZ$.
If the symplectic form is $\omega_\lambda=\mathrm{pr}_1^\ast\omega_0+\lambda \cdot\mathrm{pr}_2^\ast\omega_0$ for $\lambda >1$, then there is $[\phi_t] \in \pi_1(\Ham(S^2 \times S^2,\omega_\lambda))$ that has infinite order.
One of the representative loops is given by
\[
  \phi_t(z, w) = (z, \Phi_{z,t}(w)) \in S^2 \times S^2,
\]
where $\Phi_{z, t} \colon S^2 \to S^2$ is the $2\pi t$-rotation around the axis through the points $z$ and $-z$.
Note that the loop $\phi_t$ in $\Ham(S^2\times S^2,\omega_\lambda)$ is also a loop in $\Ham(S^2 \times S^2,\omega_1)$.
\end{comment}

By Propositions \ref{prop G,K bundle difference} and \ref{ostrover loop}, we obtain the following:

\begin{proposition}\label{ham s2s2}
  There exists a positive integer $h_0$ and a non-trivial Hamiltonian fibration $p_0\colon E_0 \to \Sigma_{h_0}$ over a closed surface.
  % that is trivial as an oriented fiber bundle.
\end{proposition}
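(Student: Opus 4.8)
The plan is to apply Proposition~\ref{prop G,K bundle difference} with $G=\Ham(S^2\times S^2,\omega_\lambda)$ for a fixed $\lambda>1$, with $K=\Diff_0(S^2\times S^2)$ the identity component of the diffeomorphism group, and with $i\colon G\to K$ the inclusion homomorphism. Two hypotheses must be checked: (i) the universal covering $\tG=\widetilde{\Ham}(S^2\times S^2,\omega_\lambda)$ is perfect; and (ii) there is a non-trivial $\tg\in\pi_1(G)$ with $i_*(\tg)=0\in\pi_1(K)$. Hypothesis (i) is Banyaga's theorem \cite{Ba78} for closed symplectic manifolds, already recorded in the excerpt.

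For (ii), recall Ostrover's loop $\phi=\{\phi_t\}$, $\phi_t(z,w)=(z,\Phi_{z,t}(w))$, where $\Phi_{z,t}$ is the $2\pi t$-rotation of $S^2$ about the axis through $z$ and $-z$; this is a loop in $\Ham(S^2\times S^2,\omega_\lambda)$ for \emph{every} $\lambda\ge 1$. Ostrover's computation \cite{Os}, which underlies Proposition~\ref{ostrover loop}, gives $\mu^\lambda(2[\phi])\neq 0$ when $\lambda>1$, so $\tg:=2[\phi]$ is a non-trivial element of $\pi_1(G)$. On the other hand the \emph{same} loop $\phi$ is also a loop in $\Ham(S^2\times S^2,\omega_1)$, and $\pi_1(\Ham(S^2\times S^2,\omega_1))\cong(\ZZ/2\ZZ)^2$ by Gromov's theorem (see \cite{A}). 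Hence the image of $[\phi]$ in $\pi_1(\Diff_0(S^2\times S^2))$ factors through this finite group and is $2$-torsion, so
\[
  i_*(\tg)=2\,i_*([\phi])=0\in\pi_1(\Diff_0(S^2\times S^2)).
\]

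Granting (i) and (ii), Proposition~\ref{prop G,K bundle difference} furnishes an integer $h_0\ge 1$ and a non-trivial principal $\Ham(S^2\times S^2,\omega_\lambda)$-bundle $E_0\to\Sigma_{h_0}$ --- equivalently, a non-trivial Hamiltonian fibration with fiber $S^2\times S^2$ --- which moreover is trivial as a smooth $S^2\times S^2$-bundle since $i_*(\tg)=0$ (only the non-triviality is needed for the statement). Concretely $h_0$ is the number of commutators in an expression $\tg=[\tg_1,\tg_2]\cdots[\tg_{2h_0-1},\tg_{2h_0}]$ in the perfect group $\tG$; pushing the $\tg_j$ down to $G$ and letting $E_0$ have holonomy $a_j\mapsto\pi(\tg_j)$, non-triviality is witnessed by $\langle\mathfrak{o}(E_0),[\Sigma_{h_0}]\rangle=\tg\neq 0$ through Milnor's algorithm \cite{Mi58}.

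The one genuinely non-formal ingredient is Proposition~\ref{ostrover loop}: the existence of a loop in $\Ham(S^2\times S^2,\omega_\lambda)$ of infinite order that becomes torsion --- indeed nullhomotopic after doubling --- in $\Diff_0(S^2\times S^2)$. This is where the difficulty lies; it rests on Ostrover's Hamiltonian Floer-theoretic quasi-morphism together with Gromov's determination of the homotopy type of the symplectomorphism group of $(S^2\times S^2,\omega_1)$, and everything else above is formal and supplied by earlier results of the paper. For the bare statement one could instead take $K$ to be the trivial group, so that (ii) reduces to the non-vanishing of $\pi_1(G)$; the choice $K=\Diff_0(S^2\times S^2)$ is made only to obtain the sharper smooth triviality.
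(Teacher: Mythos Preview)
Your proof is correct and follows essentially the same route as the paper: apply Proposition~\ref{prop G,K bundle difference} with $G=\Ham(S^2\times S^2,\omega_\lambda)$, use Banyaga's perfectness of $\tG$, and take $\tg=2[\phi]$ where $\phi$ is Ostrover's loop, observing that $\phi$ is two-torsion in $\pi_1(\Ham(S^2\times S^2,\omega_1))\cong(\ZZ/2\ZZ)^2$ and hence $i_*(\tg)=0$ in $\pi_1(\Diff_0(S^2\times S^2))$. Your closing remark that the bare statement only needs $K=\{1\}$ (equivalently, just Proposition~\ref{ostrover loop}) is also apt, and indeed the paper only records the short line ``By Propositions~\ref{prop G,K bundle difference} and~\ref{ostrover loop}'' for the formal proof.
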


\begin{comment}
\begin{proof}
Set $\tg = 2 [\phi_t] \in \pi_1(\Ham(S^2\times S^2,\omega_\lambda))$.
 Let $i \colon \Ham(S^2\times S^2,\omega_\lambda) \hookrightarrow \mathrm{Diff}_+(S^2 \times S^2)$ be an inclusion.
 Since the loop $\phi_t$ is a two-torsion in $\pi_1(\Ham(S^2\times S^2,\omega_1))$, we have $i_*(\tg) = 0$.
  By Proposition \ref{prop G,K bundle difference}, the proposition follows.
\end{proof}
\end{comment}

We can also prove that the Hamiltonian fibration $p_0\colon E_0 \to \Sigma_{h_0}$ in Proposition \ref{ham s2s2} is stably non-trivial in the following sense.
%\textit{i.e.}
%the fiber product of $E$ with every Hamiltonian bundle over the surface is also non-trivial as a Hamiltonian fibration.

\begin{proposition}\label{stably nt}
Let $(N,\omega_N)$ be a closed symplectic manifold and $p \colon \epsilon_N = \Sigma_{h_0}\times N \to \Sigma_{h_0}$ the trivial $N$-bundle.
Then, the Whitney sum
\[
  E_0 \oplus \epsilon_N \to \Sigma_{h_0}
\]
is non-trivial as a Hamiltonian fibration.
%\[\hat{E}=\{(x,y)\in E_0\times E\, ;\, p_0(x)=p(y)\}\].
\end{proposition}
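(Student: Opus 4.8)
Recall from the proof of Proposition~\ref{ham s2s2} (which goes through Proposition~\ref{prop G,K bundle difference}) that $E_0\to\Sigma_{h_0}$ is the flat $G_\lambda$-bundle $E_\rho$ whose holonomy $\rho\colon\pi_1(\Sigma_{h_0})\to G_\lambda$ satisfies $\rho(a_j)=\pi(\tg_j)$, where $\tg_1,\dots,\tg_{2h_0}\in\tG_\lambda$ are chosen with $\prod_{i=1}^{h_0}[\tg_{2i-1},\tg_{2i}]=\tg$ and $\tg\in\pi_1(G_\lambda)$ is Ostrover's element, so that $\mu^\lambda(\tg)\neq 0$ (Proposition~\ref{ostrover loop}). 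Write $F=(S^2\times S^2)\times N$ with the split symplectic form $\omega_\lambda\oplus\omega_N$, and let $j\colon G_\lambda\to\Ham(F)$ be the continuous homomorphism $g\mapsto g\times\id_N$, with lift $\tilde j\colon\tG_\lambda\to\widetilde{\Ham}(F)$. By construction, the principal $\Ham(F)$-bundle underlying $E_0\oplus\epsilon_N$ is the flat bundle with holonomy $j\circ\rho$, i.e.\ the bundle associated to the principal $G_\lambda$-bundle of $E_0$ via $j$.

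The first step is to reduce the statement to a computation of a primary obstruction class. Since $B\Ham(F)$ is simply connected and $\Sigma_{h_0}$ is a two-dimensional $CW$-complex, obstruction theory produces no obstructions above degree two, so the classifying map of $E_0\oplus\epsilon_N$ into $B\Ham(F)$ is null-homotopic --- equivalently, $E_0\oplus\epsilon_N$ is trivial as a Hamiltonian fibration --- if and only if its primary obstruction class $\mathfrak{o}(E_0\oplus\epsilon_N)\in H^2(\Sigma_{h_0};\pi_1(\Ham(F)))$ vanishes. By naturality of the primary obstruction class under the homomorphism $j$ (as in the proof of Proposition~\ref{prop G,K bundle difference}) together with Milnor's algorithm for characteristic numbers of flat bundles \cite{Mi58},
\[
  \langle \mathfrak{o}(E_0\oplus\epsilon_N),[\Sigma_{h_0}]\rangle
  = j_*\Bigl(\textstyle\prod_{i=1}^{h_0}[\tg_{2i-1},\tg_{2i}]\Bigr)
  = j_*(\tg)\in\pi_1(\Ham(F)).
\]
As $\langle\cdot,[\Sigma_{h_0}]\rangle\colon H^2(\Sigma_{h_0};A)\to A$ is an isomorphism for every abelian group $A$, it therefore suffices to prove $j_*(\tg)\neq 0$.

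The heart of the argument is that Ostrover's Calabi quasi-morphism is stable under taking symplectic products. Concretely, I would use that there is a homogeneous quasi-morphism $\widehat\mu^{\lambda}$ on $\widetilde{\Ham}(F)$ and a nonzero constant $c$ with $\widehat\mu^{\lambda}\circ\tilde j=c\cdot\mu^{\lambda}$ on $\tG_\lambda$: namely, $\mu^{\lambda}$ is the Calabi quasi-morphism attached to an idempotent $e$ of $\QH(S^2\times S^2,\omega_\lambda)$, the element $e\otimes 1_N$ is an idempotent of $\QH(F)\cong\QH(S^2\times S^2,\omega_\lambda)\otimes\QH(N)$, and the spectral invariants with respect to $e\otimes 1_N$ of Hamiltonians on $F$ pulled back from $S^2\times S^2$ are governed by the corresponding spectral invariants with respect to $e$, which forces this compatibility. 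Granting this, $\tilde j(\tg)$ lies in $\pi_1(\Ham(F))$, on which $\widehat\mu^{\lambda}$ restricts to a homomorphism, and
\[
  \widehat\mu^{\lambda}(\tilde j(\tg)) = c\cdot\mu^{\lambda}(\tg)\neq 0,
\]
so $j_*(\tg)=\tilde j(\tg)\neq 0$, and hence $E_0\oplus\epsilon_N$ is non-trivial as a Hamiltonian fibration. (One can moreover package this as in Corollary~\ref{cor:bounded_unbounded_qm}: $\widehat\mu^{\lambda}$ does not descend to $\Ham(F)$, so the associated foliated characteristic class is bounded and is non-zero on $E_0\oplus\epsilon_N$, giving a second route to non-triviality.)

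The main obstacle is the third step: establishing the stabilization $\widehat\mu^{\lambda}\circ\tilde j=c\,\mu^{\lambda}$, equivalently the non-vanishing $j_*(\tg)\neq 0$. This is a genuinely Floer-theoretic input --- a statement about Oh--Schwarz spectral invariants on a product $M\times N$ for Hamiltonians pulled back from the factor $M$ --- and is not a formal consequence of the algebraic framework developed in the present paper; the remaining steps are routine bookkeeping with primary obstruction classes and Milnor's algorithm.
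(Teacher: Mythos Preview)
Your approach is essentially the paper's: reduce non-triviality of $E_0\oplus\epsilon_N$ to $j_*(\tg)\neq 0$ in $\pi_1(\Ham(F))$ via Milnor's algorithm, then detect this element with a spectral-invariant-type function on $\widetilde{\Ham}(F)$ that restricts along $\tilde j$ to (a multiple of) $\mu^\lambda$. The paper fills in exactly the Floer-theoretic step you flag as the main obstacle by invoking Entov--Polterovich \cite[Theorem~5.1]{EP09}, which provides a function $\mu^{\lambda,N}$ on $\widetilde{\Ham}(F)$ with $\mu^{\lambda,N}(\tilde\phi_N)=\mu^\lambda(\tilde\phi)$.

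One technical point is worth flagging. You posit a genuine \emph{homogeneous quasi-morphism} $\widehat\mu^\lambda$ on $\widetilde{\Ham}(F)$, built from the idempotent $e\otimes 1_N$. This is precisely the Entov--Polterovich construction, but in general it yields only a \emph{partial} Calabi quasi-morphism; the paper remarks explicitly that it is not known whether $\mu^{\lambda,N}$ restricts to a homomorphism on $\pi_1$. So your appeal to ``$\widehat\mu^\lambda$ restricts to a homomorphism on $\pi_1(\Ham(F))$'' is not justified as stated. Fortunately it is also unnecessary: since $\mu^{\lambda,N}$ is a well-defined function on $\widetilde{\Ham}(F)$ vanishing at the identity, $\mu^{\lambda,N}(\tilde j(\tg))=\mu^\lambda(\tg)\neq 0$ already forces $\tilde j(\tg)\neq\mathrm{id}$, hence $j_*(\tg)\neq 0$. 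With that adjustment your argument goes through and coincides with the paper's. (For the same reason, your parenthetical ``second route'' via Corollary~\ref{cor:bounded_unbounded_qm} would require the stronger, unproven assumption that $\widehat\mu^\lambda$ is a genuine homogeneous quasi-morphism.)
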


\begin{comment}
\begin{proposition}\label{stably nt}
Let $(N,\omega_N)$ be a closed symplectic manifold and $p\colon \Sigma_{h_0}\times N \to \Sigma_{h_0}$ the first projection.
Then,
the fiber product
\[p\times_{\Sigma_{h_0}}p_0 \colon (\Sigma_{h_0}\times N)\times_{\Sigma_{h_0}} E_0 \to \Sigma_{h_0}\]
 is non-trivial as a Hamiltonian fibration.
%\[\hat{E}=\{(x,y)\in E_0\times E\, ;\, p_0(x)=p(y)\}\].
\end{proposition}
\end{comment}

To prove Proposition \ref{stably nt}, we %recall Ostrover's Calabi quasi-morphism $\mu^{\lambda} \colon \tHam(S^2\times S^2,\omega_{\lambda})\to\RR$ and
use the following theorem essentially proved by Entov and Polterovich.

\begin{theorem}[Theorem 5.1 of \cite{EP09}]\label{spectral invariant of product}
Let $(N,\omega_N)$ be a closed symplectic manifold.
For $\lambda\geq1$, let $\omega_{\lambda,N}$ denote the symplectic form $\mathrm{pr}_1^\ast\omega_\lambda+\mathrm{pr}_2^\ast\omega_N$ where $\mathrm{pr}_1\colon S^2\times S^2\times N\to S^2\times S^2$, $\mathrm{pr}_2\colon S^2\times S^2\times N\to N$ are the first, second projection, respectively.
Then, there exists a function $\mu^{\lambda,N}\colon\tHam(S^2\times S^2\times N,\omega_{\lambda,N})\to\RR$ such that
\[\mu^{\lambda,N}(\tilde\phi_N)=\mu^\lambda(\tilde\phi)\]
for every $\tilde\phi\in\tHam(S^2\times S^2,\omega_{\lambda,N})$.

Here, $\tilde\phi_N$ is the element of $\tHam(S^2\times S^2\times N,\omega_{\lambda,N})$ represented by the path $\{\phi_N^t\}_{t\in[0,1]}$ defined by $\phi_N^t(x,y)=\left(\phi^t(x),y\right)$ where $\{\phi^t\}_{t\in[0,1]}$ is a path in $\Ham(S^2 \times S^2,\omega_\lambda)$ representing $\tilde{\phi}$.
\end{theorem}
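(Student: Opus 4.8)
The plan is to obtain the statement from the construction of the Calabi quasi-morphism $\mu^\lambda$ via Oh--Schwarz spectral invariants, using the product techniques of Entov--Polterovich. Recall that $\mu^\lambda$ is, up to a normalizing constant, the homogenization
\[
  \widetilde\phi \longmapsto \lim_{n\to\infty}\frac{c(e_\lambda,\widetilde\phi^{\,n})}{n}
\]
of the spectral invariant $c(e_\lambda,\cdot)$ attached to an idempotent $e_\lambda\in\QH_*(S^2\times S^2,\omega_\lambda)$ for which $e_\lambda\QH_*(S^2\times S^2,\omega_\lambda)$ is a field (with coefficients in a suitable Novikov field). The first step is to produce the analogous data on the product. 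By the K\"unneth formula for quantum homology there is a ring isomorphism
\[
  \QH_*(S^2\times S^2\times N,\omega_{\lambda,N})\ \cong\ \QH_*(S^2\times S^2,\omega_\lambda)\otimes\QH_*(N,\omega_N),
\]
and under it $e_{\lambda,N}:=e_\lambda\otimes[N]$ is an idempotent, where $[N]$ is the fundamental class (the unit of $\QH_*(N,\omega_N)$). I would then define $\mu^{\lambda,N}$ to be the homogenization of $c(e_{\lambda,N},\cdot)$, suitably normalized; when $e_{\lambda,N}$ cuts out a field summand this is a homogeneous quasi-morphism on $\tHam(S^2\times S^2\times N,\omega_{\lambda,N})$, but in any case it is a homogeneous functional, and only the restriction identity below is needed for the applications.

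The second step is the identity $\mu^{\lambda,N}(\widetilde\phi_N)=\mu^\lambda(\widetilde\phi)$, which I would deduce from the identity of spectral invariants
\[
  c(e_{\lambda,N},\widetilde\phi_N)\ =\ c(e_\lambda,\widetilde\phi)+c([N],\mathrm{id}_N)\ =\ c(e_\lambda,\widetilde\phi),
\]
the last equality being the standard normalization $c([N],\mathrm{id}_N)=0$. The key observation is that if $\widetilde\phi$ is generated by a Hamiltonian $H_t$ on $S^2\times S^2$, then $\widetilde\phi_N$ is generated by the pullback $\mathrm{pr}_1^*H_t$, which is constant in the $N$-directions; after adding a $C^2$-small Morse function on $N$ one sees that the Floer complex of $\mathrm{pr}_1^*H_t$ on $(S^2\times S^2\times N,\omega_{\lambda,N})$ splits (with Novikov coefficients) as the tensor product of the Floer complex of $H_t$ on $(S^2\times S^2,\omega_\lambda)$ with the Morse complex of $N$, the action filtration being the sum of the two filtrations and the PSS maps being compatible with the tensor decomposition. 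Applying this to $\widetilde\phi_N^{\,n}=(\widetilde\phi^{\,n})_N$ and passing to the homogenization yields $\mu^{\lambda,N}(\widetilde\phi_N)=\mu^\lambda(\widetilde\phi)$.

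The main obstacle is the product formula for spectral invariants on $S^2\times S^2\times N$: one has to realize the K\"unneth isomorphism at the chain level in a way compatible with the action filtrations, so that $c(a\otimes b,\cdot)$ is additive on split Hamiltonians, and then check that this additivity survives homogenization. This is precisely the content of Entov--Polterovich's machinery behind Theorem~5.1 of \cite{EP09}, so in the write-up I would isolate the split-Hamiltonian splitting of Floer complexes as a lemma, quote the triangle inequality and subadditivity of spectral invariants for the needed estimates, and then assemble the homogenized identity. A minor additional point to record is that $\mu^{\lambda,N}$ is a function on all of $\tHam(S^2\times S^2\times N,\omega_{\lambda,N})$, since $c(e_{\lambda,N},\cdot)$ is defined for arbitrary elements of the universal cover; the restriction identity is a separate claim about its values on the elements $\widetilde\phi_N$.
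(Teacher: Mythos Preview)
The paper does not prove this theorem at all: it is quoted as ``Theorem 5.1 of \cite{EP09}'' and attributed in the text as ``essentially proved by Entov and Polterovich,'' with no argument given. So there is nothing in the paper to compare your proposal against.

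That said, your sketch is a reasonable outline of the Entov--Polterovich mechanism behind the cited result: define $\mu^{\lambda,N}$ as the homogenization of the spectral invariant attached to the idempotent $e_\lambda\otimes[N]$ via the K\"unneth isomorphism for quantum homology, and deduce the restriction identity from a product formula for spectral invariants on split Hamiltonians. You are also right to flag that $e_{\lambda,N}$ need not cut out a field summand, so $\mu^{\lambda,N}$ is only asserted to be a \emph{function} (the paper's subsequent remark records exactly this: it is a partial Calabi quasi-morphism in the sense of \cite{E}, and the authors explicitly say they do not know whether its restriction to $\pi_1$ is a homomorphism). For the application (Proposition~\ref{stably nt}) only the value on a single loop is used, so the restriction identity suffices and your caveat is well placed.
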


\begin{remark}
The function $\mu^{\lambda,N}$ satisfy the conditions of  ``partial Calabi quasi-morphism'' (\cite[Theorem 3.2]{E}).
However, the authors do not know whether the restriction of $\mu^{\lambda,N}$ to the fundamental group is homomorphism or not.
\end{remark}

\begin{proof}[Proof of Proposition $\ref{stably nt}$]
Let $\tg = \{ \tg^t \}_{t \in [0,1]}$ be a path in $\Ham(S^2\times S^2,\omega_\lambda)$ corresponding to the bundle $E_0$.
Define a loop $\tg_N = \{ \tg_N^t \}_{t \in [0,1]}$ in $\Ham(S^2\times S^2\times N,\omega_{\lambda,N})$ by $\tg_N^t(x,y)=\left(g^t(x),y\right)$.
Then, by Theorem \ref{spectral invariant of product} and Proposition \ref{ostrover loop}, we have that $\mu^{\lambda,N}(\tg_N) = \mu^{\lambda}(\tg) \neq 0$, in particular, $\tg_N$ is a non-trivial element of $\pi_1\left(\Ham(S^2\times S^2\times N,\omega_{\lambda,N})\right)$.
By Proposition \ref{prop G,K bundle difference}, the proposition follows.
\end{proof}

%%%%%%%%%%%%%%%%%%%%%%%%%%%%%%%%%%%%%%%%%%%%%%%%%%%%
%%%%%%%以下これまでの残り%%%%%%%%%%%%%%%%%%%%%%%%%%%%%%
%%%%%%%%%%%%%%%%%%%%%%%%%%%%%%%%%%%%%%%%%%%%%%%%%%%%

%For a group $\hG$ and its normal subgroup $G$, let $i \colon G \to \hG$ denote the inclusion. The map $i$ induces maps $i^\ast \colon H^2(\hG;\RR) \to H^2(G;\RR)$,
%$i_Q^\ast \colon Q(\hG) \to Q(G)$ and $i_b^\ast \colon H_b^2(\hG;\RR)\to H_b^2(G;\RR)$.

\bibliographystyle{amsalpha}
\bibliography{second_cohom_of_ham_kyocho.bib}

\end{document}